\documentclass[12pt]{article}
\usepackage{amsfonts, amsmath, amssymb, latexsym, eucal, amscd} 
\usepackage{cite}
\usepackage[dvips]{pict2e}
\usepackage[all]{xy}
\newtheorem{theorem}{Theorem}[section]
\newtheorem{lemma}[theorem]{Lemma}
\newtheorem{prop}[theorem]{Proposition}

\newtheorem{exAux}[theorem]{Example}

\newtheorem{Def}[theorem]{Definition}
\newenvironment{defi}{\begin{Def} \rm}{\end{Def}}
\newtheorem{Note}[theorem]{Note}
\newenvironment{note}{\begin{Note} \rm}{\end{Note}} 
\newtheorem{Problem}[theorem]{Problem}

\newtheorem{Rem}[theorem]{Remark}

\newtheorem{Not}[theorem]{Notation}

\newtheorem{Conj}[theorem]{Conjecture}

\newtheorem{Ass}[theorem]{Assumption}

\newenvironment{proof}{\medskip\noindent{\bf Proof.\ }}{\qed\medskip}
\newenvironment{proofof}[1]{\medskip\noindent{\bf Proof  of {#1}.\ 
}}{\qed\medskip}
\newcommand{\qed}{\hfill\mbox{$\Box$\qquad\qquad}}

\newcommand{\F}{{\mathbb F}}

\newcommand{\Mat}{\text{\rm Mat}}
\newcommand{\vphi}{\varphi}
\renewcommand{\th}{\theta}

\newcommand{\vth}{\vartheta}

\newif\ifDRAFT

\begin{document}

\title{Variations on a circular  Hessenberg pair}

\author{Kazumasa Nomura and Paul Terwilliger}

\maketitle

\begin{abstract}
A square matrix is called Hessenberg whenever each entry below the subdiagonal is 
zero and each entry on the subdiagonal is nonzero.
A Hessenberg pair is an ordered pair of diagonalizable linear maps on a
nonzero finite-dimensional vector space,
that each act on an eigenbasis of the other one in a Hessenberg fashion.
A Hessenberg system is a sequence
$\Phi = (A; \{E_i\}_{i=0}^d; A^*; \{E^*_i\}_{i=0}^d)$
consisting of a Hessenberg pair $A,A^*$ and an appropriate ordering $\{E_i\}_{i=0}^d$
(resp.\ $\{E^*_i\}_{i=0}^d$) of the primitive idempotents of $A$ (resp.\ $A^*$).
The ordering $\{E_i\}_{i=0}^d$ (resp.\ $\{E^*_i\}_{i=0}^d$) induces an ordering 
of the eignvalues of $A$ (resp.\ $A^*$) called
the eigenvalue sequence (resp.\ dual eigenvalue sequence) of $\Phi$.
It is known that $\Phi$ is determined up to isomorphism by its
parameter array;
this consists of the eigenvalue sequence of $\Phi$,
the dual eigenvalue sequence of $\Phi$,
and a sequence of nonzero scalars  $\{\phi_i\}_{i=1}^d$ called the split sequence of $\Phi$.
We are interested in some types of Hessenberg matrices,
said to be circular, quasi-circular, tridiagonal, and irreducible tridiagonal.
We are interested in the  families of  Hessenberg systems
for which  the associated Hessenberg matrices have
one of the above types.
A Hessenberg system of irreducible tridiagonal type is often called a Leonard system.
In this case the associated Hessenberg pair 
 satisfies  two relations,
called the tridiagonal relations.
We are interested in the family of Hessenberg systems for which
the associated Hessenberg pair satisfies the tridiagonal relations.
We are also interested in the family of Hessenberg systems for which
the eigenvalue sequence and dual eigenvalue sequence 
 satisfy
a linear three-term recurrence.
In the present paper we have two main goals.
First,
we show how the above families of Hessenberg systems are related to each other.
Second, we describe each family in terms of the
parameter array.
\end{abstract}

\section{Introduction}

This paper is about a linear-algebraic object called a Hessenberg pair.
This object was introduced by Godjali \cite{God} (where it was called a TH pair).
A Hessenberg pair is described as follows.
A square matrix is called  Hessenberg whenever each entry below the subdiagonal is zero and
each entry on the subdiagonal is nonzero.
For example, a $6 \times 6$  Hessenberg matrix has the form
\[
\text{Hessenberg:} \qquad
\begin{pmatrix}
* & *  & * & * & * & *  \\
\bullet & * & * & * & * & *  \\
 0 & \bullet &  * & * & * & *  \\
0 &  0 & \bullet &  * & * & * \\
0 & 0 &  0 & \bullet &  * & * \\
0 & 0 & 0 &  0 & \bullet &  *  \\
\end{pmatrix},\qquad
\]
where 
 $\bullet$ denotes an entry which must be nonzero,
and $*$ denotes an entry which might be nonzero.
A  Hessenberg pair is an ordered pair of diagonalizable linear maps on a
nonzero finite-dimensional vector space,
that each act on an eigenbasis of the other one in a
Hessenberg fashion.
A Hessenberg system is a sequence 
$\Phi  = (A; \{E_i\}_{i=0}^d; A^*; \{E^*_i\}_{i=0}^d)$
consisting of a
Hessenberg pair $A,A^*$ together with 
an appropriate ordering $\{E_i\}_{i=0}^d$ of the primitive idempotents of $A$ 
and 
an appropriate ordering $\{E^*_i\}_{i=0}^d$ of the primitive idempotents of $A^*$,
see Definition \ref{def:HS}.
The ordering $\{E_i\}_{i=0}^d$ (resp. $\{E^*_i\}_{i=0}^d$)
induces an ordering of the eigenvalues of $A$ (resp.\ $A^*$)
called the eigenvalue sequence (resp.\ dual eigenvalue sequence) of $\Phi$.
As explained in \cite{God},
the Hessenberg system $\Phi$
is determined up to isomorphism
by its parameter array; this consists of the eigenvalue sequence  of $\Phi$,
the dual eigenvalue sequence of  $\Phi $,
and a sequence of nonzero scalars $\{\phi_i\}_{i=1}^d$ called the split sequence of  $\Phi$.

In the present paper, we will discuss some special  Hessenberg pairs and systems,
for  which  the associated Hessenberg matrices have one of the
following forms.
\[
\begin{array}{ll}
\text{Circular Hessenberg:}
&
\begin{pmatrix}
* & *  & 0 & 0 & 0 & \bullet \\
\bullet & * & * & 0 & 0 & 0  \\
 0 & \bullet &  * & * & 0 & 0  \\
0 &  0 & \bullet &  * & * & 0 \\
0 & 0 &  0 & \bullet &  * & * \\
0 & 0 & 0 &  0 & \bullet &  *  \\
\end{pmatrix},
\\
\text{Quasi-Circular Hessenberg:}  \rule{0mm}{10ex}
&
\begin{pmatrix}
* & *  & 0 & 0 & 0 & *  \\
\bullet & * & * & 0 & 0 & 0  \\
 0 & \bullet &  * & * & 0 & 0  \\
0 &  0 & \bullet &  * & * & 0 \\
0 & 0 &  0 & \bullet &  * & * \\
0 & 0 & 0 &  0 & \bullet &  *  \\
\end{pmatrix},  
\\ 
\text{Tridiagonal Hessenberg:}   \rule{0mm}{10ex}
&
\begin{pmatrix}
* & *  & 0 & 0 & 0 & 0  \\
\bullet & * & * & 0 & 0 & 0  \\
 0 & \bullet &  * & * & 0 & 0  \\
0 &  0 & \bullet &  * & * & 0 \\
0 & 0 &  0 & \bullet &  * & * \\
0 & 0 & 0 &  0 & \bullet &  *  \\
\end{pmatrix},
\\
\text{Irreducible Tridiagonal:}  \rule{0mm}{10ex}
&
\begin{pmatrix}
* & \bullet  & 0 & 0 & 0 & 0  \\
\bullet & * & \bullet & 0 & 0 &  0   \\
 0 & \bullet &  * & \bullet & 0 & 0  \\
0 &  0 & \bullet &  * & \bullet & 0 \\
0 & 0 &  0 & \bullet &  * & \bullet \\
0 & 0 & 0 &  0 & \bullet &  *  \\
\end{pmatrix}.
\end{array}
\]
A Hessenberg pair (resp.\ system) of Irreducible Tridiagonal type is often called a Leonard pair
(resp.\ Leonard system), see \cite{T:Leonard}.
We abbreviate
\begin{center}
\begin{tabular}{ll}
HS: & Hessenberg system,
\\
CHS: & Circular Hessenberg system,
\\
QCHS: & Quasi-Circular Hessenberg system,
\\
THS: & Tridiagonal Hessenberg system,
\\
LS: & Leonard system.
\end{tabular}
\end{center}
In the present paper,
we will also discuss the following two families of Hessenberg pairs and systems.
By  \cite[Theorem 1.12]{T:Leonard}, for a Leonard pair $A,A^*$
there exist scalars 
$\beta, \gamma, \gamma^*, \varrho, \varrho^*$ such that
\begin{align}
 0 &= [A, A^2 A^* - \beta A A^* A + A^* A^2 - \gamma (A A^* + A^* A) - \varrho A^*],   \label{eq:td1a}
\\
 0 &= [A^*, A^{*2} A - \beta A^* A A^* + A A^{*2} - \gamma^* (A^* A + A A^*) - \varrho^* A],  \label{eq:td2a}
\end{align}
where  $[X,Y] = XY - YX$.
The relations \eqref{eq:td1a}, \eqref{eq:td2a} are called the tridiagonal relations (TD relations).
We say that a Hessenberg pair $A,A^*$ is TD whenever 
there exist scalars 
$\beta, \gamma, \gamma^*, \varrho, \varrho^*$ that satisfy
\eqref{eq:td1a}, \eqref{eq:td2a}.
A Hessenberg system is called TD whenever the corresponding Hessenberg pair is TD.
By \cite[Lemma  12.7]{T:Leonard}, for a Leonard system $\Phi$ the eigenvalue sequence
and the dual eigenvalue sequence satisfy a linear three-term recurrence.
We say that a Hessenberg system $\Phi$ is REC whenever the
eigenvalue sequence and dual eigenvalue sequence  of $\Phi$ satisfy this recurrence.

We have some historical comments about the above families  of Hessenberg systems.
The CHS family was introduced by Jae-Ho Lee \cite{JHL}.
The QCHS family and the THS family are introduced in this paper.
The Leonard systems  were introduced in \cite{T:Leonard}.
In \cite{T:survey, T:qRacah, T:notes, T:parray}
it is described how the Leonard systems correspond to the orthogonal polynomials from
the terminating branch of the Askey scheme.
This branch    consists of the $q$-Racah polynomials and their relatives, see \cite{Koe}.
Leonard systems appear in the context of $Q$-polynomial distance-regular graphs,
where they are used to describe the thin irreducible modules for the subconstituent algebra,
see 
\cite{Cau, CMT, Cerzo, NT:LPspin}.
In  \cite{CT}, Leonard systems are used to resolve the Kresch-Tamvakis conjecture involving
$_4 F_3$ hypergeometric series.
In \cite{NT:DAHA}, Leonard systems are used to describe the finite-dimensional irreducible  modules
for the rank one DAHA of type $(C_1^\vee, C_1)$.
For additional results about Leonard systems, 
see \cite{T:TDD, NT:switch,T:decomp, T:24points, NT:Krawt, NT:trid}.
The TD relations first appeared in \cite[Lemma 5.4]{T:subconst3}.
The TD relations are the defining relations for the tridiagonal algebra, see \cite[Definition 3.9]{T:TDrel}.

In the present paper we have two main goals.
Our first goal is to describe how the following families of Hessenberg systems are related:
\begin{equation}
\text{HS, \quad CHS, \quad QCHS, \quad THS, \quad LS, \quad TD, \quad REC}.    \label{eq:lst}
\end{equation}
Our finding is summarized by the following diagrams,
which show the logical implications:
\begin{equation}
\text{QCHS} \quad   \Leftrightarrow  \quad 
\text{TD}   \label{eq:diag1} 
\end{equation}
\begin{equation}
\text{LS} \quad \Rightarrow \quad 
\text{THS} \quad  \Rightarrow \quad 
\text{TD} \quad \Rightarrow \quad  
\text{REC} \quad  \Rightarrow  \quad   
\text{HS}                          \label{eq:diag2}
\end{equation}
Moreover,
CHS is the compliment of THS in TD.

Our second goal is  to consider each family of Hessenberg system in \eqref{eq:diag2},
and describe it in terms of the parameter array.
Some of these descriptions are cited from the prior
literature,
and some are new to the paper.
In the table below,
we state where in the paper each description is located:
\begin{center}
\begin{tabular}{c|c}
family & description in terms of the parameter array
\\ \hline
LS & Lemma \ref{lem:LSclassify}        \rule{0mm}{3.5ex}
\\
THS & Propositions \ref{prop:QCHTD},     \ref{prop:vthd2}  \rule{0mm}{3.5ex}
\\
TD   &  Propositions \ref{prop:vth}, \ref{prop:TDd2}       \rule{0mm}{3.5ex}
\\
REC   & Definition \ref{def:recH}     \rule{0mm}{3.5ex}
\\
HS & Lemma \ref{lem:Hclassify}            \rule{0mm}{3.5ex}
\end{tabular}
\end{center}

The paper is organized as follows.
Section \ref{sec:preli} contains some preliminaries.
In Section \ref{sec:rec} we recall some results about recurrent sequences.
In Section \ref{sec:matrix} we define some  special Hessenberg matrices.
In Section \ref{sec:HS} we recall some results concerning a Hessenberg  system.
In Section \ref{sec:CHS} we recall some results concerning CHS.
In Section \ref{sec:QCHS} we describe QCHS.
In Section \ref{sec:LS} we recall some results concerning  LS.
In Section \ref{sec:THS} we describe THS.
In Sections  \ref{sec:TD}, \ref{sec:adjust} we recall the TD relations and describe the TD property.
In Section \ref{sec:triple} we obtain some necessary and sufficient conditions for certain
triple products to vanish.
We use these results to describe TD and THS.
In Section \ref{sec:proof} we complete the proofs of \eqref{eq:diag1}, \eqref{eq:diag2}
and prove that CHS is the compliment of THS in TD.

.

\section{Preliminaries}
\label{sec:preli}
\ifDRAFT {\rm sec:preli}. \fi

Throughout the paper, 
we use the following concepts and notations.
Fix an integer $d \geq 2$.
Let $\F$ denote a  field, and let
$V$ denote a vector space over $\F$ with dimension $d+1$.
Let $\text{\rm End}(V)$ denote the $\F$-algebra consisting of the  $\F$-linear maps $V \to V$.
Let $I \in \text{\rm End}(V)$ denote the identity map.
For $X, Y \in \text{End}(V)$, define $[X,Y] = X Y - Y X$.
An element $A \in \text{End}(V)$ is said to be {\em diagonalizable} whenever $V$ is spanned by
the eigenspaces of $A$.
An element $A \in \text{End}(V)$ is said to be {\em multiplicity-free} 
whenever $A$ is diagonalizable and each eigenspace of $A$  has dimension one.
Assume that $A$ is multiplicity-free,  and
let $\{V_i\}_{i=0}^d$ denote an ordering of the eigenspaces of $A$.
The sum $V = \sum_{i=0}^d V_i$ is direct.
For $0 \leq i \leq d$ let $\th_i$ denote the eigenvalue of $A$ for $V_i$.
The scalars  $\{\th_i\}_{i=0}^d$ are contained in $\F$ and mutually distinct.
For $0 \leq i \leq d$ define $E_i \in \text{End}(V)$ such that
$(E_i - I) V_i = 0$ and 
$E_i V_j = 0$ if $j \neq i$ $(0 \leq j \leq d)$.
Then
(i) $ A E_i = E_i A = \th_i E_i$ $(0 \leq i \leq d)$;
(ii) $E_i E_j = \delta_{i,j}E_i$ $(0 \leq i,j \leq d)$;
(iii) $I = \sum_{i=0}^d E_i $;
(iv) $A =\sum_{i=0}^d \th_i E_i$;
(v) $V_i = E_i V$ $(0 \leq i  \leq d)$.
We call  $E_i$ the {\em primitive idempotent of $A$ associated with $\th_i$} $(0 \leq i \leq d)$.
By linear algebra,
\[
E_i = \prod_{\text{\scriptsize $\begin{matrix}  0 \leq j \leq d \\ j \neq i \end{matrix}$} }
           \frac{A-\th_j I} { \th_i - \th_j}
\qquad\qquad ( 0 \leq i \leq d).    
\]
The elements $\{E_i\}_{i=0}^d $ form a basis for  the subalgebra of $\text{End}(V)$ generated by $A$.
Let $\Mat_{d+1}(\F)$ denote the $\F$-algebra consisting of
the $(d+1) \times (d+1)$ matrices that have all entry in $\F$.
We index the rows and columns by $0,1, \ldots, d$.

\section{Recurrent sequences}
\label{sec:rec}
\ifDRAFT {\rm sec:rec}. \fi

In this section,
we recall from \cite{T:Leonard} and \cite{T:notes}  some results about recurrent sequences.

\begin{defi}  {\rm (See \cite[Definition 8.2]{T:Leonard}.) }
\label{def:betarec}    \samepage
\ifDRAFT {\rm def:betarec}. \fi
Let $\beta, \gamma, \varrho$ denote scalars in   $\F$,
and let $\{\th_i\}_{i=0}^d$ denote a sequence of scalars   in $\F$.
\begin{itemize}
\item[\rm (i)]
The sequence  $\{\th_i\}_{i=0}^d$ is said to be {\em recurrent} whenever
$\th_{i-1} \neq \th_i$ for $2 \leq i \leq d-1$ and
\begin{equation}
\frac{\th_{i-2} - \th_{i+1} } { \th_{i-1} - \th_i }            \label{eq:indep0}
\end{equation}
is independent of $i$ for $2 \leq i \leq d-1$.
\item[\rm (ii)]
The sequence  $\{\th_i\}_{i=0}^d$ is said to be {\em $\beta$-recurrent} whenever
\[
\th_{i-2} - (\beta+1) \th_{i-1}  + (\beta+1) \th_i - \th_{i+1} = 0
\qquad \qquad
(2 \leq i \leq d-1).
\]
\item[\rm (iii)]
The sequence  $\{\th_i\}_{i=0}^d$ is said to be {\em $(\beta,\gamma)$-recurrent} whenever
\[
\gamma = \th_{i-1} - \beta \th_i + \th_{i+1}
\qquad\qquad   (1 \leq i \leq d-1).
\]
\item[\rm (iv)]
The sequence  $\{\th_i\}_{i=0}^d$ is said to be {\em $(\beta,\gamma, \varrho)$-recurrent} whenever
\[
\varrho = \th_{i-1}^2 - \beta \th_{i-1} \th_i + \th_i ^2 - \gamma (\th_{i-1} + \th_i)
\qquad\qquad (1 \leq i \leq d).
\]
\end{itemize}
\end{defi}

\begin{note}   \label{note:rec}   \samepage
\ifDRAFT {\rm note:rec}. \fi
Referring to Definition \ref{def:betarec}, assume that $d=2$.
Then $\{\th_i\}_{i=0}^d$ is $\beta$-recurrent for all $\beta \in \F$.
\end{note}

\begin{lemma}   \label{lem:recinv}   \samepage
\ifDRAFT {\rm lem:recinv}. \fi
Referring to Definition \ref{def:betarec},
$\{\th_i\}_{i=0}^d$ is recurrent (resp.\  $\beta$-recurrent) (resp.\  $(\beta,\gamma)$-recurrent)
(resp. $(\beta,\gamma, \varrho)$-recurrent)
if and only if  $\{\th_{d-i}\}_{i=0}^d$ is recurrent
(resp.\  $\beta$-recurrent)  (resp.\  $(\beta,\gamma)$-recurrent)
(resp. $(\beta,\gamma, \varrho)$-recurrent).
\end{lemma}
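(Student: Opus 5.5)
We verify each of the four conditions in Definition \ref{def:betarec} directly under the substitution $\th'_i = \th_{d-i}$. Since reversal is an involution, proving the forward implication ``$\{\th_i\}_{i=0}^d$ has the property $\Rightarrow$ $\{\th_{d-i}\}_{i=0}^d$ has the property'' suffices; the converse follows by applying it to $\{\th'_i\}_{i=0}^d$.

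In each case we reindex with $j = d-i$ and note that the range of the index is invariant. For (i), $\th'_{i-1}\neq\th'_i$ for $2\le i\le d-1$ becomes $\th_{j}\neq\th_{j+1}$ for $1\le j\le d-2$. Putting $k=j+1$, this is $\th_{k-1}\neq\th_k$ for $2\le k\le d-1$, which is the original condition. The ratio transforms as
\[
\frac{\th'_{i-2}-\th'_{i+1}}{\th'_{i-1}-\th'_i}
=\frac{\th_{d-i+2}-\th_{d-i-1}}{\th_{d-i+1}-\th_{d-i}}
=\frac{\th_{k-2}-\th_{k+1}}{\th_{k-1}-\th_{k}},
\]
where $k=d-i+1$. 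Numerator and denominator both change sign, so the ratio is unchanged. As $i$ runs over $2,\ldots,d-1$, so does $k$. Hence the common value is the same and is still independent of the index.

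For (ii), the expression $\th'_{i-2}-(\beta+1)\th'_{i-1}+(\beta+1)\th'_i-\th'_{i+1}$ with $k=d-i+1$ equals
\[
\th_{k+1}-(\beta+1)\th_k+(\beta+1)\th_{k-1}-\th_{k-2}.
\]
This is the negative of the original expression at index $k$, again with $2\le k\le d-1$. For (iii), with $k=d-i$ the expression $\th'_{i-1}-\beta\th'_i+\th'_{i+1}$ becomes $\th_{k+1}-\beta\th_k+\th_{k-1}$, which is symmetric, and $1\le k\le d-1$. For (iv), with $k=d-i+1$ the pair $(\th'_{i-1},\th'_i)$ becomes $(\th_k,\th_{k-1})$. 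The expression $x^2-\beta xy+y^2-\gamma(x+y)$ is symmetric in $x,y$, and $1\le k\le d$.

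No step is a genuine obstacle. The only care needed is the index bookkeeping: choosing the shift ($k=d-i$ or $k=d-i+1$) so that the transformed range coincides with the original one, and noting the sign changes in (i) and (ii).
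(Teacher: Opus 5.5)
Your proposal is correct and takes the same approach as the paper, which proves the lemma simply by appeal to Definition \ref{def:betarec}. Your reindexing ($k=d-i$ or $k=d-i+1$), the sign changes in (i) and (ii), and the symmetry in (iii) and (iv) are exactly the verification that one-line proof leaves to the reader, and your index ranges all check out.
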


\begin{proof}
By Definition \ref{def:betarec}.
\end{proof}

\begin{lemma}    {\rm (See \cite[Lemma 8.3]{T:Leonard}.) }
\label{lem:recseq0}    \samepage
\ifDRAFT {\rm lem:redcseq0}. \fi
Let $\{\th_i\}_{i=0}^d$  denote a  sequence of scalars in $\F$.
Then the following are equivalent:
\begin{itemize}
\item[\rm (i)]
 $\{\th_i\}_{i=0}^d$ is recurrent;
\item[\rm (ii)]
there exists $\beta \in \F$ such that $\{\th_i\}_{i=0}^d$ is $\beta$-recurrent,
and $\th_{i-1} \neq \th_i$ for $2 \leq i  \leq d-1$.
\end{itemize}
Suppose {\rm (i), (ii)} hold and $d \geq 3$. 
Then the common value of \eqref{eq:indep0} is $\beta+1$.
\end{lemma}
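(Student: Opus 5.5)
The plan is to prove the two implications separately, working directly from Definition \ref{def:betarec}. The only algebraic fact needed is the identity
\[
\th_{i-2} - (\beta+1)\th_{i-1} + (\beta+1)\th_i - \th_{i+1}
 = (\th_{i-2}-\th_{i+1}) - (\beta+1)(\th_{i-1}-\th_i),
\]
which holds for $2 \leq i \leq d-1$. It says that the $\beta$-recurrence at index $i$ holds if and only if $\th_{i-2}-\th_{i+1} = (\beta+1)(\th_{i-1}-\th_i)$.

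(i) $\Rightarrow$ (ii). Assume $\{\th_i\}_{i=0}^d$ is recurrent. The condition $\th_{i-1}\neq\th_i$ for $2\le i\le d-1$ is then part of the definition. We split into two cases.
\begin{itemize}
\item If $d=2$, the range $2\le i\le d-1$ is empty. So, as in Note \ref{note:rec}, the sequence is $\beta$-recurrent for every $\beta\in\F$, and we may pick any $\beta$.
\item If $d\ge 3$, let $c$ denote the common value of \eqref{eq:indep0} and set $\beta=c-1$. Then $\th_{i-2}-\th_{i+1}=(\beta+1)(\th_{i-1}-\th_i)$ for $2\le i\le d-1$. By the identity above, this is exactly the $\beta$-recurrence.
\end{itemize}

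(ii) $\Rightarrow$ (i). Assume the sequence is $\beta$-recurrent and $\th_{i-1}\neq\th_i$ for $2\le i\le d-1$. Then the denominators in \eqref{eq:indep0} are nonzero. Dividing the rearranged $\beta$-recurrence by $\th_{i-1}-\th_i$ gives
\[
\frac{\th_{i-2}-\th_{i+1}}{\th_{i-1}-\th_i}=\beta+1
\qquad (2\le i\le d-1),
\]
which is independent of $i$. Hence the sequence is recurrent.

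The final assertion uses the same display. When (i) and (ii) hold with $d\ge3$, the range $2\le i\le d-1$ is nonempty, so the common value of \eqref{eq:indep0} is defined and equals $\beta+1$. In particular, when $d\ge3$ the scalar $\beta$ in (ii) is uniquely determined.

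There is no genuine obstacle here. The only care needed is the degenerate case $d=2$, where the index range is empty: both conditions then hold vacuously, and the final claim correctly excludes this case.
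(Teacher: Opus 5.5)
Your proposal is correct: the rearrangement $\th_{i-2}-(\beta+1)\th_{i-1}+(\beta+1)\th_i-\th_{i+1}=(\th_{i-2}-\th_{i+1})-(\beta+1)(\th_{i-1}-\th_i)$, together with separate handling of the vacuous case $d=2$, settles both implications and the final claim directly from Definition \ref{def:betarec}. The paper gives no proof of its own and only cites \cite[Lemma 8.3]{T:Leonard}; your direct verification from the definitions is the standard argument for this lemma.
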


\begin{lemma}    {\rm (See \cite[Lemma 8.4]{T:Leonard}.) }
\label{lem:recseq1}    \samepage
\ifDRAFT {\rm lem:redcseq1}. \fi
Let $\{\th_i\}_{i=0}^d$  denote a  sequence of scalars in $\F$.
Then for $\beta \in \F$
the following are equivalent:
\begin{itemize}
\item[\rm (i)]
 $\{\th_i\}_{i=0}^d$ is $\beta$-recurrent;
\item[\rm (ii)]
there exists $\gamma \in \F$ such that $\{\th_i\}_{i=0}^d$ is $(\beta, \gamma)$-recurrent.
\end{itemize}
\end{lemma}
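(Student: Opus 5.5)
The plan is to prove the two implications directly from Definition \ref{def:betarec}, by comparing consecutive instances of the defining relations. The key observation is that the $\beta$-recurrence expression is a difference of two consecutive $(\beta,\gamma)$-expressions. For $1 \leq i \leq d-1$ define
\[
\gamma_i = \th_{i-1} - \beta \th_i + \th_{i+1}.
\]
For $2 \leq i \leq d-1$ a direct expansion gives
\[
\gamma_{i-1} - \gamma_i = \th_{i-2} - \beta\th_{i-1} + \th_i - \th_{i-1} + \beta \th_i - \th_{i+1}
= \th_{i-2} - (\beta+1)\th_{i-1} + (\beta+1)\th_i - \th_{i+1}.
\]
So the left-hand side of the $\beta$-recurrence at index $i$ is exactly $\gamma_{i-1}-\gamma_i$.

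For (ii) $\Rightarrow$ (i), suppose $\{\th_i\}_{i=0}^d$ is $(\beta,\gamma)$-recurrent. Then $\gamma_i = \gamma$ for $1 \leq i \leq d-1$. Hence each difference $\gamma_{i-1}-\gamma_i$ vanishes for $2 \leq i \leq d-1$, and by the identity above the sequence is $\beta$-recurrent.

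For (i) $\Rightarrow$ (ii), suppose $\{\th_i\}_{i=0}^d$ is $\beta$-recurrent. By the identity above, $\gamma_{i-1}=\gamma_i$ for $2 \leq i \leq d-1$, so $\gamma_1 = \gamma_2 = \cdots = \gamma_{d-1}$. Set $\gamma = \gamma_1$, which is defined because $d \geq 2$. Then the relation $\gamma = \th_{i-1} - \beta\th_i + \th_{i+1}$ holds for all $1 \leq i \leq d-1$, so the sequence is $(\beta,\gamma)$-recurrent.

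When $d=2$ there is only one index $i=1$, so the chain of equalities is empty and $\gamma=\gamma_1$ works trivially; this agrees with Note \ref{note:rec}. There is no real obstacle here; the only step that needs care is the expansion identity and the index bookkeeping.
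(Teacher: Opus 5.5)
Your proof is correct and complete. The $\beta$-recurrence expression at index $i$ equals $\gamma_{i-1}-\gamma_i$, where $\gamma_i=\th_{i-1}-\beta\th_i+\th_{i+1}$, and this handles both directions; your index ranges are right, and $d\ge 2$ ensures $\gamma_1$ exists. The paper gives no proof of its own and only cites \cite[Lemma 8.4]{T:Leonard}, where the argument is, as far as I recall, this same comparison of consecutive $(\beta,\gamma)$-expressions.
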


\begin{lemma}    {\rm (See \cite[Lemma 8.5]{T:Leonard}.) }
\label{lem:recseq2}    \samepage
\ifDRAFT {\rm lem:redseq2}. \fi
Let $\{\th_i\}_{i=0}^d$  denote a  sequence of scalars in $\F$.
Then for $\beta, \gamma \in \F$
the following hold.
\begin{itemize}
\item[\rm (i)]
Suppose $\{\th_i\}_{i=0}^d$  is  $(\beta, \gamma)$-recurrent.
Then there exists $\varrho \in \F$ such that $\{\th_i\}_{i=0}^d$  is $(\beta, \gamma, \varrho)$-recurrent.
\item[\rm (ii)]
Suppose $\th_{i-1} \neq \th_{i+1}$ $(1 \leq i \leq d-1)$, and
there exists  $\varrho \in \F$ such that  $\{\th_i\}_{i=0}^d$  is $(\beta, \gamma, \varrho)$-recurrent.
Then $\{\th_i\}_{i=0}^d$  is $(\beta, \gamma)$-recurrent.
\end{itemize}
\end{lemma}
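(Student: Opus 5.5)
Both parts come down to comparing the expression in Definition \ref{def:betarec}(iv) at two consecutive indices. For $1 \leq i \leq d$ write
\[
p_i = \th_{i-1}^2 - \beta \th_{i-1}\th_i + \th_i^2 - \gamma(\th_{i-1}+\th_i).
\]
Then $\{\th_i\}_{i=0}^d$ is $(\beta,\gamma,\varrho)$-recurrent exactly when $p_i = \varrho$ for $1 \leq i \leq d$. So I would compute $p_{i+1}-p_i$ for $1 \leq i \leq d-1$ and factor it.

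In $p_{i+1}-p_i$ the terms $\th_i^2$ cancel. What remains is
\[
\th_{i+1}^2 - \th_{i-1}^2 - \beta\th_i(\th_{i+1}-\th_{i-1}) - \gamma(\th_{i+1}-\th_{i-1}).
\]
Pulling out the common factor gives the key identity
\[
p_{i+1}-p_i = (\th_{i+1}-\th_{i-1})\,(\th_{i-1} - \beta\th_i + \th_{i+1} - \gamma)
\qquad (1 \leq i \leq d-1).
\]
The second factor is exactly the defect in the $(\beta,\gamma)$-recurrence of Definition \ref{def:betarec}(iii) at index $i$.

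For (i), assume $\{\th_i\}_{i=0}^d$ is $(\beta,\gamma)$-recurrent. Then the second factor vanishes for $1 \leq i \leq d-1$, so $p_{i+1}=p_i$ for all such $i$. Hence $p_1 = p_2 = \cdots = p_d$, and we take $\varrho = p_1$.

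For (ii), assume $p_i = \varrho$ for all $i$ and $\th_{i-1} \neq \th_{i+1}$ for $1 \leq i \leq d-1$. Then $p_{i+1}-p_i = 0$ and the first factor is nonzero, so the second factor vanishes. This gives $\gamma = \th_{i-1} - \beta\th_i + \th_{i+1}$ for $1 \leq i \leq d-1$, which is the $(\beta,\gamma)$-recurrence.

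The only real work is the factorization of $p_{i+1}-p_i$, and that is a direct expansion. No earlier result from the paper is needed beyond Definition \ref{def:betarec}.
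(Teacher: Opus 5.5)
Your proof is correct. The paper gives no proof of its own and simply cites \cite[Lemma 8.5]{T:Leonard}; your argument is essentially the standard one behind that citation. The identity $p_{i+1}-p_i=(\th_{i+1}-\th_{i-1})(\th_{i-1}-\beta\th_i+\th_{i+1}-\gamma)$ gives (i) directly, and gives (ii) once you cancel the nonzero factor $\th_{i+1}-\th_{i-1}$.
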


\begin{lemma}   {\rm (See \cite[Proposition 13.4]{T:notes}.)}
\label{lem:Tvth}     \samepage
\ifDRAFT {\rm lem:Tvth}. \fi
Let $\{\th_i\}_{i=0}^d$ denote a sequence of mutually distinct scalars in $\F$.
Assume that there exists $\beta \in \F$ such that  $\{\th_i\}_{i=0}^d$
is $\beta$-recurrent.
Then for scalars $\{\vth_i\}_{i=0}^{d+1}$ in $\F$ the following are
equivalent:
\begin{itemize}
\item[\rm (i)]
$\displaystyle
 \vth_i = \vth_1 \sum_{\ell=0}^{i-1} \frac{\th_\ell - \th_{d-\ell}} { \th_0 - \th_d}
 \qquad\qquad (0 \leq i \leq d+1)$;
\item[\rm (ii)]
the sequence $\{\vth_i\}_{i=0}^{d+1}$ is $\beta$-recurrent and
\[
\vth_0 = 0,
\qquad\qquad
\vth_1 = \vth_d,
\qquad\qquad
\vth_{d+1} = 0.
\]
\end{itemize}
\end{lemma}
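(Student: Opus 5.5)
Write $\tau_\ell = \th_\ell - \th_{d-\ell}$ for $0 \leq \ell \leq d$, and set $\tau_{-1}=\tau_{d+1}=0$. The plan is to show that the sequence $\{\vth_i\}_{i=0}^{d+1}$ defined by (i) satisfies the conditions in (ii), and conversely that (ii) forces $\vth$ to be determined by $\vth_1$, hence equal to (i).

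\textbf{Step 1: the differences $\tau$.} By Lemma \ref{lem:recinv}, $\{\th_{d-\ell}\}_{\ell=0}^d$ is $\beta$-recurrent. Since the recurrence in Definition \ref{def:betarec}(ii) is linear, the difference $\{\tau_\ell\}_{\ell=0}^d$ is also $\beta$-recurrent, i.e.
\[
\tau_{\ell-2} - (\beta+1)\tau_{\ell-1} + (\beta+1)\tau_\ell - \tau_{\ell+1} = 0 \qquad (2 \leq \ell \leq d-1).
\]
Moreover $\tau_{d-\ell} = -\tau_\ell$. Also $\tau_0 = \th_0 - \th_d \neq 0$ by distinctness, so the formula in (i) makes sense.

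\textbf{Step 2: (i) $\Rightarrow$ (ii).} Put $s_i = \sum_{\ell=0}^{i-1} \tau_\ell$, so that $\vth_i = \vth_1 s_i/\tau_0$.
\begin{itemize}
\item $s_0 = 0$.
\item $s_{d+1} = \sum_{\ell=0}^d \tau_\ell = 0$ by the antisymmetry $\tau_{d-\ell} = -\tau_\ell$.
\item $s_d = s_{d+1} - \tau_d = \tau_0 = s_1$, which gives $\vth_d = \vth_1$.
\end{itemize}
For $\beta$-recurrence of $\{s_i\}_{i=0}^{d+1}$, note that
\[
s_{i-2} - (\beta+1)s_{i-1} + (\beta+1)s_i - s_{i+1} = -\tau_{i-2} + \beta\tau_{i-1} - \tau_i ,
\]
so we need $\tau_{i-2} - \beta\tau_{i-1} + \tau_i = 0$ for $2 \leq i \leq d$. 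By Lemma \ref{lem:recseq1}, $\tau$ is $(\beta,\gamma)$-recurrent for some $\gamma$, where $\tau$ is viewed on $0..d$. This means $\tau_{j-1} - \beta\tau_j + \tau_{j+1} = \gamma$ for $1 \leq j \leq d-1$.

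The \textbf{main obstacle} is showing $\gamma = 0$. Using antisymmetry, the expression at index $j$ equals minus the expression at index $d-j$, so $\gamma = -\gamma$. This settles the matter when $\operatorname{char}\F \neq 2$. For characteristic $2$ (and for small $d$ where the range is short), I would instead argue directly: $\tau$ itself is an antisymmetric solution of the second-order recurrence $x_{j-1} - \beta x_j + x_{j+1} = \gamma$. Subtracting, $\tau - \tau^{\mathrm{rev}}$ solves the homogeneous equation, and one checks it vanishes; care with the boundary cases may be needed.

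\textbf{Step 3: (ii) $\Rightarrow$ (i).} Let $\vth'$ denote the sequence in (i) built from the given $\vth_1$. The difference $\vth - \vth'$ is $\beta$-recurrent, with values $0$ at index $0$, $0$ at index $1$, $0$ at index $d+1$, and equal values at indices $1$ and $d$, so also $0$ at index $d$. A $\beta$-recurrent sequence is determined by its first three terms. So it suffices to show that a $\beta$-recurrent sequence $x$ on $0..d+1$ with $x_0 = x_1 = x_d = x_{d+1} = 0$ vanishes, which reduces to showing $x_2 = 0$.

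By Lemma \ref{lem:recseq1}, $x$ is $(\beta,\gamma')$-recurrent. Then $x_2 = \gamma'$, and the sequence is determined linearly by $\gamma'$ as $x_i = \gamma' u_i$ for a fixed sequence $u$. The condition $x_d = 0$ then forces $\gamma' = 0$ unless $u_d = 0$. The case $u_d = 0$ must be excluded using the distinctness of the $\th_i$: one compares with $\th$, which satisfies the same recurrence with distinct values, to show the degenerate case contradicts $\th_0 \neq \th_d$. This degeneracy check is the second delicate point, and I would handle it by expressing $u$ via the characteristic roots of $x^2 - \beta x + 1$.
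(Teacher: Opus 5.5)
The paper gives no proof of this lemma (it is quoted from Terwilliger's notes), so your argument has to stand on its own. Its architecture is right. Step 1 is fine, and so are the boundary computations $s_0=0$, $s_{d+1}=0$, $s_d=s_1$. Step 3 correctly reduces (ii) $\Rightarrow$ (i) to a nondegeneracy statement. But two steps are not actually proved.

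\textbf{Step 2.} The antisymmetry argument only gives $2\gamma=0$, and your characteristic-$2$ fallback is vacuous. Since $\tau^{\mathrm{rev}}=-\tau$, the sequence $\tau-\tau^{\mathrm{rev}}$ is $2\tau$. Saying it solves the homogeneous equation just restates $2\gamma=0$, and in characteristic $2$ it is identically zero and says nothing about $\gamma$.

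The obstacle disappears if you apply Lemma \ref{lem:recseq1} to $\{\th_i\}_{i=0}^d$ rather than to $\tau$. Then $\{\th_i\}_{i=0}^d$ is $(\beta,\gamma)$-recurrent for some $\gamma$, and by Lemma \ref{lem:recinv} so is $\{\th_{d-i}\}_{i=0}^d$, with the \emph{same} $\gamma$. Subtracting gives $\tau_{j-1}-\beta\tau_j+\tau_{j+1}=0$ $(1\le j\le d-1)$ in every characteristic.

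\textbf{Step 3.} The claim $u_d\neq 0$ is only announced, and the sketch targets the wrong inequality. For $d=4$, $\beta=-1$ one has $u_4=0$; here $\beta$-recurrence of $\{\th_i\}_{i=0}^4$ forces $\th_0=\th_3$ and $\th_1=\th_4$, but still allows $\th_0\neq\th_4$. A case analysis via the roots of $x^2-\beta x+1$ would also have to handle $\beta=\pm 2$, positive characteristic, and roots outside $\F$.

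A rank count avoids all of this. The $\beta$-recurrent sequences $\{x_i\}_{i=0}^{d+1}$ form a $3$-dimensional space $R$. Consider the linear map $R\to\F^3$, $x\mapsto(x_0,\,x_{d+1},\,x_1-x_d)$.
\begin{itemize}
\item[\rm (a)] The constant sequence maps to $(1,1,0)$.
\item[\rm (b)] $\{\th_i\}_{i=0}^d$, extended to index $d+1$ by the recurrence, maps to a vector whose third coordinate $\th_1-\th_d$ is nonzero, since $d\ge 2$.
\end{itemize}
So the map has rank at least $2$. By the corrected Step 2, its kernel contains the nonzero sequence in (i) with $\vth_1=1$. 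Hence the rank is exactly $2$, the kernel is the line spanned by that sequence, and every sequence satisfying (ii) is $\vth_1$ times it.

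The same count also yields your $u_d\neq 0$. If $u_d=0$, the image $(0,u_{d+1},0)$ of $u$ lies in the span of the two vectors in (a) and (b), which forces $u_{d+1}=0$. Then $u$ is in the kernel with $u_1=0$, so $u=0$, contradicting $u_2=1$.
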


\section{Some special Hessenberg matrices}
\label{sec:matrix}
\ifDRAFT {\rm sec:matrix}. \fi

In this section, we describe five types of  matrices that will play a role in
our main results.

\begin{defi}   \label{def:Hessen}    \samepage
\ifDRAFT {\rm def:Hessrn}. \fi
A matrix  $M \in \Mat_{d+1}(\F)$
is said to be  {\em Hessenberg} (or {\em H}) whenever
for $0 \leq i,j \leq d$,
\begin{itemize}
\item[\rm (i)]
$M_{i,j} = 0$ $\;$ if  $\;$ $1 < i-j$;
\item[(ii)]
$M_{i,j} \neq 0$ $\;$  if $\;$  $1= i-j$.
\end{itemize}
\end{defi}

\begin{defi}   \label{def:CH}    \samepage
\ifDRAFT {\rm def:CH}. \fi
A matrix  $M \in  \Mat_{d+1}(\F)$
is said to be  {\em Circular Hessenberg} (or {\em CH}) whenever
for $0 \leq i,j \leq d$,
\begin{itemize}
\item[\rm (i)]
$M_{i,j} = 0$ $\;$  if $\;$ $1 < i-j$ $\;$ or  $\:$ $1 < j-i< d$;
\item[(ii)]
$M_{i,j} \neq 0$ $\;$   if $\;$  $1= i-j$ $\;$ or $\;$ $j-i=d$.
\end{itemize}
\end{defi}

\begin{defi}   \label{def:QCH}    \samepage
\ifDRAFT {\rm def:QCH}. \fi
A matrix  $M \in \Mat_{d+1}(\F)$
is said to be  {\em Quasi-Circular Hessenberg} (or {\em QCH}) whenever
for $0 \leq i,j \leq d$,
\begin{itemize}
\item[\rm (i)]
$M_{i,j} = 0$ $\;$  if $\;$ $1 < i-j$ $\;$ or  $\:$ $1 < j-i< d$;
\item[(ii)]
$M_{i,j} \neq 0$ $\;$   if $\;$  $1= i-j$.
\end{itemize}
\end{defi}

\begin{defi}   \label{def:TH}    \samepage
\ifDRAFT {\rm def:TH}. \fi
A matrix  $M \in  \Mat_{d+1}(\F)$
is said to be  {\em Tridiagonal  Hessenberg} (or {\em TH}) whenever
for $0 \leq i,j \leq d$,
\begin{itemize}
\item[\rm (i)]
$M_{i,j} = 0$ $\;$  if $\;$ $1 < |i-j|$;
\item[(ii)]
$M_{i,j} \neq 0$ $\;$   if $\;$  $1= i-j$.
\end{itemize}
\end{defi}

\begin{defi}   \label{def:IT}    \samepage
\ifDRAFT {\rm def:IT}. \fi
A matrix  $M \in \Mat_{d+1}(\F)$
is said to be  {\em Irreducible Tridiagonal} (or {\em IT}) whenever
for $0 \leq i,j \leq d$,
\begin{itemize}
\item[\rm (i)]
$M_{i,j} = 0$ $\;$  if $\;$ $1 < |i-j|$;
\item[(ii)]
$M_{i,j} \neq 0$ $\;$   if $\;$  $1= |i-j|$.
\end{itemize}
\end{defi}

We just described five types of  matrices.
We now illustrate each type  for $d=5$.
In the illustration,
we denote by $\bullet$ an entry which must be nonzero,
and by $*$ an entry which might be nonzero.

\begin{align*}
\text{H:} &
\begin{pmatrix}
* & *  & * & * & * & *  \\
\bullet & * & * & * & * & *  \\
 0 & \bullet &  * & * & * & *  \\
0 &  0 & \bullet &  * & * & * \\
0 & 0 &  0 & \bullet &  * & * \\
0 & 0 & 0 &  0 & \bullet &  *  \\
\end{pmatrix},
&
\text{CH:} &
\begin{pmatrix}
* & *  & 0 & 0 & 0 & \bullet \\
\bullet & * & * & 0 & 0 & 0  \\
 0 & \bullet &  * & * & 0 & 0  \\
0 &  0 & \bullet &  * & * & 0 \\
0 & 0 &  0 & \bullet &  * & * \\
0 & 0 & 0 &  0 & \bullet &  *  \\
\end{pmatrix},
&
\text{QCH:} &
\begin{pmatrix}
* & *  & 0 & 0 & 0 & *  \\
\bullet & * & * & 0 & 0 & 0  \\
 0 & \bullet &  * & * & 0 & 0  \\
0 &  0 & \bullet &  * & * & 0 \\
0 & 0 &  0 & \bullet &  * & * \\
0 & 0 & 0 &  0 & \bullet &  *  \\
\end{pmatrix},
\\\\
\text{TH:} &
\begin{pmatrix}
* & *  & 0 & 0 & 0 & 0  \\
\bullet & * & * & 0 & 0 & 0  \\
 0 & \bullet &  * & * & 0 & 0  \\
0 &  0 & \bullet &  * & * & 0 \\
0 & 0 &  0 & \bullet &  * & * \\
0 & 0 & 0 &  0 & \bullet &  *  \\
\end{pmatrix},
&
\text{IT:} &
\begin{pmatrix}
* & \bullet  & 0 & 0 & 0 & 0  \\
\bullet & * & \bullet & 0 & 0 &  0   \\
 0 & \bullet &  * & \bullet & 0 & 0  \\
0 &  0 & \bullet &  * & \bullet & 0 \\
0 & 0 &  0 & \bullet &  * & \bullet \\
0 & 0 & 0 &  0 & \bullet &  *  \\
\end{pmatrix}.
\end{align*}

We comment on how the above types of matrices are related.
The following diagram shows the logical implications:
\begin{center}
\begin{tabular}{cccccccc}
& &   &  & CH
\\
& & & & $\Downarrow$
\\
IT
&
$\Rightarrow$ 
&
TH
&
$\Rightarrow$
& 
QCH
&
$\Rightarrow$ 
&
H
\end{tabular}
\end{center}

\section{Hessenberg pairs and systems}
\label{sec:HS}
\ifDRAFT {\rm sec:HS}. \fi

In this section, we recall from \cite{God} the notion of a Hessenberg pair and a Hessenberg system.

\begin{defi}   {\rm (See \cite[Definition 1.1]{God}.) }
\label{def:HP} \samepage
\ifDRAFT {\rm ded:HP} \fi
By a {\em Hessenberg pair} on $V$,
we mean an ordered pair $A,A^*$ of elements in $\text{End}(V)$
that satisfy the following {\rm (i), (ii)}:
\begin{itemize}
\item[\rm (i)]
there exists a basis for $V$ with respect to which the matrix representing $A$ is Hessenberg
and the matrix representing $A^*$ is diagonal;
\item[\rm (ii)]
there exists a basis for $V$ with respect to which the matrix representing $A^*$ is Hessenberg
and the matrix representing $A$ is diagonal.
\end{itemize}
\end{defi}

\begin{note} \label{note:diff}  \samepage
\ifDRAFT {\rm not3:diff}. \fi
Our concept of a Hessenberg pair is slightly different from the one in \cite{God}.
What we call a Hessenberg pair is called a TH pair in \cite{God}.
\end{note}

\begin{lemma}   {\rm (See \cite[Lemma 2.1]{God}.) }
\label{lem:mfree}    \samepage
\ifDRAFT {\rm lem:mfee}. \fi
Let $A, A^*$ denote a Hessenberg pair on $V$.
Then each of $A,A^*$ is multiplicity-free.
\end{lemma}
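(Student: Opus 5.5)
By symmetry it suffices to show that $A$ is multiplicity-free; the argument for $A^*$ is identical after swapping the roles of conditions (i) and (ii) in Definition \ref{def:HP}. Use condition (i): fix a basis $\{v_i\}_{i=0}^d$ of $V$ for which the matrix $M$ representing $A$ is Hessenberg and $A^*$ is diagonal. From (ii) we know $A$ is diagonalizable, since $A$ is represented by a diagonal matrix in some basis. Being diagonalizable, $A$ is multiplicity-free exactly when it has $d+1$ distinct eigenvalues. Equivalently, its minimal polynomial has degree $d+1$.

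The key step is to show that the minimal polynomial of $A$ has degree $d+1$, and the Hessenberg shape of $M$ does this. Since $M_{i,j}=0$ for $i-j>1$ and $M_{i+1,i}\neq 0$, induction on $k$ gives the following for $0\le k\le d$. The vector $A^k v_0$ lies in the span of $v_0,\ldots,v_k$, and its coefficient on $v_k$ is $\prod_{i=0}^{k-1}M_{i+1,i}\neq 0$. Hence $v_0, Av_0, \ldots, A^d v_0$ are linearly independent, so $v_0$ is a cyclic vector for $A$. Consequently no nonzero polynomial of degree at most $d$ annihilates $A$, because it would already fail to annihilate $v_0$. So the minimal polynomial has degree $d+1$.

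Combining the two facts finishes the proof. For a diagonalizable $A$ the minimal polynomial is $\prod(x-\theta)$ over the distinct eigenvalues $\theta$. Since this product has degree $d+1=\dim V$, there are $d+1$ distinct eigenvalues, so each eigenspace has dimension one and $A$ is multiplicity-free.

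The only point that needs care is the induction for the leading coefficient of $A^k v_0$. The needed fact is that $A v_j$ involves only $v_0,\ldots,v_{j+1}$, with nonzero coefficient $M_{j+1,j}$ on $v_{j+1}$. Beyond that the argument is routine, and I expect no real obstacle.
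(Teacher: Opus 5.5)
Your proof is correct and complete. Condition (ii) of Definition~\ref{def:HP} makes $A$ diagonalizable. The Hessenberg shape from condition (i) makes $v_0$ a cyclic vector, so the minimal polynomial of $A$ has degree $d+1$. Together these force $d+1$ distinct eigenvalues, and the same argument with the conditions swapped handles $A^*$.

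The paper gives no proof of its own here; it only cites Godjali's Lemma~2.1. A common alternative route to the same fact, that an unreduced Hessenberg matrix is non-derogatory, is a rank argument. For any $\theta\in\F$, delete row $0$ and column $d$ of $M-\theta I$. What remains is a triangular matrix whose diagonal consists of the nonzero subdiagonal entries of $M$. Hence $\mathrm{rank}(M-\theta I)\geq d$, so each eigenspace has dimension at most one. That version bounds the eigenspaces directly without using the minimal polynomial, but the underlying idea is the same as yours.
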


\begin{defi}   {\rm (See \cite[Definition 2.2]{God}.) }
\label{def:HS}    \samepage
\ifDRAFT {\rm def:HS}. \fi
By a {\em Hessenberg system} (or {\em HS}) on $V$
we mean a sequence
\[
\Phi  = (A; \{E_i\}_{i=0}^d; A^*; \{E^*_i\}_{i=0}^d)
\]
 of elements of $\text{End}(V)$ that satisfy the following (i)--(v):
\begin{itemize}
\item[(i)]
each of $A,A^*$ is multiplicity-free;
\item[(ii)]
$\{E_i\}_{i=0}^d$ is an ordering of the primitive idempotents of $A$;
\item[(iii)]
$\{E^*_i\}_{i=0}^d$ is an ordering of the primitive idempotents of $A^*$;
\item[(iv)]
$\displaystyle
E_i A^* E_j =
\begin{cases}
 0 & \text{ if $1 < i-j$},
\\
\neq 0 & \text{ if $1 = i-j$}
\end{cases}
\qquad\qquad ( 0 \leq i,j \leq d);
$
\item[(v)]
$\displaystyle
E^*_i A E^*_j =
\begin{cases}
 0 & \text{ if $1 < i-j$},
\\
\neq 0 & \text{ if $1= i-j$}
\end{cases}
\qquad\qquad ( 0 \leq i,j \leq d).
$
\end{itemize}
We say that $\Phi$ is {\em over $\F$}.
\end{defi}

Hessenberg  pairs and Hessenberg systems are related as follows.
Let  
\[
(A; \{E_i\}_{i=0}^d; A^*; \{E^*_i\}_{i=0}^d)
\]
denote a Hessenberg  system on $V$.
Then $A,A^*$ is a Hessenberg  pair on $V$.
Conversely,
let $A,A^*$ denote a Hessenberg  pair on $V$.
Let $\{v_i\}_{i=0}^d$ (resp.\ $\{v^*_i\}_{i=0}^d$) denote a basis for $V$
that satisfies Definition \ref{def:HP}(ii) (resp.\ Definition \ref{def:HP}(i)).
For $0 \leq i \leq d$,
the vector $v_i$ (resp.\ $v^*_i$) is an eigenvector for $A$ (resp.\ $A^*$);
let $E_i$  (resp.\ $E^*_i$) denote the corresponding primitive idempotent of $A$ (resp.\ $A^*$).
Then   $(A; \{E_i\}_{i=0}^d; A^*; \{E^*_i\}_{i=0}^d)$
is a Hessenberg system on $V$.

\begin{defi}    \label{def:eigenseq}    \samepage
\ifDRAFT  {\rm def:eigenseq}. \fi
Referring to Definition \ref{def:HS},
for $0 \leq i \leq d$ let $\th_i$ (resp.\ $\th^*_i$) denote the eigenvalue of $A$ (resp.\ $A^*$)
for $E_i$ (resp.\ $E^*_i$).
We call $\{\th_i\}_{i=0}^d$ (resp. $\{\th^*_i\}_{i=0}^d$) the {\em eigenvalue sequence}
(resp.\ {\em dual eigenvalue sequence}) of  $\Phi$.
\end{defi}

We recall the concept of isomorphism for Hessenberg  systems.
Let
\[
\Phi = (A; \{E_i\}_{i=0}^d; A^*; \{E^*_i\}_{i=0}^d)
\]
denote a Hessenberg system on $V$.
Let $V'$ denote a vector space over $\F$ with dimension $d+1$.
For an algebra isomorphism $\sigma :  \text{End}(V) \to \text{End}(V')$,
define 
\[
\Phi^\sigma = (A^\sigma; \{(E_i)^\sigma \}_{i=0}^d; (A^*)^\sigma ; \{(E^*_i)^\sigma\}_{i=0}^d).
\]
Then $\Phi^\sigma$ is a Hessenberg system on $V'$.
Let $\Phi'$ denote a Hessenberg system  on $V'$.
By an {\em isomorphism of Hessenberg  systems from $\Phi$ to $\Phi'$} we mean
an algebra isomorphism $\sigma : \text{End}(V) \to \text{End}(V')$ such that
$\Phi^\sigma = \Phi'$.
We say that $\Phi$ and $\Phi'$ are {\em isomorphic} whenever there exists an
isomorphism of Hessenberg systems from $\Phi$ to $\Phi'$.

\begin{defi}   {\rm (See \cite[Definition 2.6]{God}.) }
\label{def:dual}    \samepage
\ifDRAFT {\rm def:dual}. \fi
Let $\Phi = (A; \{E_i\}_{i=0}^d; A^*; \{E^*_i\}_{i=0}^d)$
denote a Hessenberg system  on $V$.
Then
$\Phi^* = (A^*; \{E^*_i\}_{i=0}^d; A;  \{E_i\}_{i=0}^d)$
is a Hessenberg system  on $V$.
We call $\Phi^*$ the {\em dual} of $\Phi$.
For an object $f$ attached to $\Phi$, let $f^*$ denote the corresponding 
object attached to $\Phi^*$.
\end{defi}

Let $\Phi = (A; \{E_i\}_{i=0}^d; A^*; \{E^*_i\}_{i=0}^d)$
denote a Hessenberg system  on $V$.
We recall the $\Phi$-split decomposition of $V$.
For $0 \leq i \leq d$ define
\begin{equation}
U_i = (E^*_0  V + E^*_1 V + \cdots + E^*_i V) \cap
 (E_0  V + E_1 V + \cdots + E_{d-i} V).                       \label{eq:Ui}
\end{equation}
By \cite[Lemma 2.4, Theorem 4.1]{T:decomp} we have the following facts.
The dimension of $U_i$ is $1$ for $0 \leq i  \leq d$.
The sum $V = \sum_{i=0}^d U_i$ is direct.
Moreover
\begin{align*}
 (A - \th_{d-i} I ) U_i &= U_{i+1} \quad ( 0 \leq i \leq d-1),
&
 (A-\th_0 I) U_d &= 0,
\\
 ( A^* - \th^*_i I) U_i &= U_{i-1} \quad (1 \leq i \leq d),
&
 (A^* -\th^*_0 I) U_0 &= 0.
\end{align*}
By \eqref{eq:Ui} we have $U_d = E_0 V$.
Thus
\[
U_i  = (A^* - \th^*_{i+1} I) \cdots (A^* - \th^*_{d-1} I) ( A^* - \th^*_d I) E_0 V
\qquad\qquad (0 \leq i \leq d).
\]
Pick $0 \neq \eta_0 \in E_0 V$.
Then for $0 \leq i \leq d$ the vector 
\[
 (A^* - \th^*_{i+1} I) \cdots (A^* - \th^*_{d-1} I) ( A^* - \th^*_d I) \eta_0
\]
is a basis for $U_i$.
The sequence $\{U_i\}_{i=0}^d$ is called the {\em $\Phi$-split decomposition of $V$}.

\begin{defi}    {\rm (See \cite[Definition 4.2]{God}.)}
\label{def:xplitbasis}    \samepage
\ifDRAFT {\rm def:splitbaasis}. \fi
A basis for $V$ is said to be {\em $\Phi$-split}
whenever
 it has the form
\[
 (A^* - \th^*_{i+1} I) \cdots (A^* - \th^*_{d-1} I) ( A^* - \th^*_d I) \eta_0    
\qquad\qquad (0 \leq i \leq d),
\]
where  $0 \neq \eta_0 \in E_0 V$.
\end{defi}

Pick $1 \leq i \leq d$.
By construction,
\[
(A-\th_{d-i+1} I) (A^* - \th^*_i I) U_i = U_i.
\]
So, there exists a unique nonzero  $\phi_i \in \F$ such that
the following holds on $U_i$:
\[
(A-\th_{d-i+1} I) (A^* - \th^*_i I)  = \phi_i  I. 
\]
By \cite[Lemma 4.6]{God} the following holds on $U_{i-1}$:
\[
 (A^* - \th^*_i I)(A-\th_{d-i+1} I)  = \phi_i  I.  
\]
We call the sequence $\{\phi_i\}_{i=1}^d$ the {\em split sequence of $\Phi$}.
For notational convenience,  define
\[
\phi_0 = 0,
\qquad\qquad
\phi_{d+1} = 0.   
\]

\begin{defi}    \label{def:parray}    \samepage
\ifDRAFT {\rm def:parray}. \fi
Let $\Phi$ denote a Hessenberg system on $V$.
By the {\em paramater array} of $\Phi$ we mean the sequence
\[
 (\{\th_i\}_{i=0}^d; \{\th^*_i\}_{i=0}^d; \{\phi_i\}_{i=1}^d),  
\]
where $\{\th_i\}_{i=0}^d$ (resp.$\{\th^*_i\}_{i=0}^d$) is the
eigenvalue sequence (resp.\ dual eigenvalue sequence) of $\Phi$,
and $\{\phi_i\}_{i=1}^d$ is the split sequence of $\Phi$.
\end{defi}

\begin{lemma}   {\rm (See \cite[Lemma 5.1]{God}.) }
\label{lem:Phis}    \samepage
\ifDRAFT {\rm lem:Phis}. \fi
Referring to Definition \ref{def:parray},
the parameter array of $\Phi^*$ is
\[
(\{\th^*_i\}_{i=0}^d; \{\th_i\}_{i=0}^d; \{\phi_{d-i+1}\}_{i=1}^d).
\]
\end{lemma}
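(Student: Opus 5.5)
The first two components are immediate. The eigenvalue sequence of $\Phi^*$ is the eigenvalue sequence of $A^*$ ordered by $\{E^*_i\}_{i=0}^d$, which is $\{\th^*_i\}_{i=0}^d$. Likewise the dual eigenvalue sequence of $\Phi^*$ is $\{\th_i\}_{i=0}^d$. All the work is in identifying the split sequence $\{\phi^*_i\}_{i=1}^d$ of $\Phi^*$, and the claim to prove is $\phi^*_i = \phi_{d-i+1}$.

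The plan is to compare the $\Phi^*$-split decomposition with the $\Phi$-split decomposition. By \eqref{eq:Ui} applied to $\Phi^*$,
\[
U^*_i = (E_0V+\cdots+E_iV)\cap(E^*_0V+\cdots+E^*_{d-i}V).
\]
This is exactly $U_{d-i}$ from \eqref{eq:Ui} for $\Phi$. So $U^*_i = U_{d-i}$ for $0 \le i \le d$.

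By definition, $\phi^*_i$ is the scalar by which $(A^*-\th^*_{d-i+1}I)(A-\th_i I)$ acts on $U^*_i = U_{d-i}$. Put $j = d-i+1$, so that $1 \le j \le d$ and $U_{d-i} = U_{j-1}$. Then $\phi^*_i$ is the scalar by which $(A^*-\th^*_j I)(A-\th_{d-j+1} I)$ acts on $U_{j-1}$. By \cite[Lemma 4.6]{God}, as recalled before Definition \ref{def:parray}, this operator acts on $U_{j-1}$ as $\phi_j I$. Hence $\phi^*_i = \phi_j = \phi_{d-i+1}$.

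The only step that needs care is the index bookkeeping between $U^*_i$ and $U_{d-i}$, together with the appeal to \cite[Lemma 4.6]{God}, which supplies the reversed-order product on $U_{j-1}$. If one prefers not to rely on that lemma, it can be checked directly. On $U_{j-1}$, the map $A-\th_{d-j+1}I$ sends $U_{j-1}$ onto $U_j$. On $U_j$, the product $(A-\th_{d-j+1}I)(A^*-\th^*_jI)$ acts as $\phi_j I$. Then for $u \in U_{j-1}$ we write $u=(A^*-\th^*_jI)w$ with $w\in U_j$, which gives
\[
(A^*-\th^*_jI)(A-\th_{d-j+1}I)u=(A^*-\th^*_jI)\phi_j w=\phi_j u.
\]
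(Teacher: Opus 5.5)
Your proof is correct. The paper gives no argument of its own beyond citing \cite[Lemma 5.1]{God}, and your route is the natural proof of that cited fact. You observe from \eqref{eq:Ui} that $U^*_i = U_{d-i}$. You then read off $\phi^*_i$ from the reversed-order identity $(A^*-\th^*_jI)(A-\th_{d-j+1}I)=\phi_j I$ on $U_{j-1}$, which the paper recalls just before Definition \ref{def:parray}.

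Your direct check of that reversed-order identity is also sound: it writes $u\in U_{j-1}$ as $(A^*-\th^*_jI)w$ with $w\in U_j$. This makes the argument self-contained, relying only on the split-decomposition facts stated in the paper.
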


Let $\{v_i\}_{i=0}^d$ denote a basis for $V$.
By the {\em inversion} of this basis we mean the basis $\{v_{d-i}\}_{i=0}^d$.

\begin{lemma}     {\rm (See \cite[Propositions 4.4, 5,3, 5.6, 5.9]{God}.) }
\label{lem:matrixAAs}
\ifDRAFT {\rm lem:matrixAAs}. \fi
Let $\Phi = (A; \{E_i\}_{i=0}^d; A^*; \{E^*_i\}_{i=0}^d)$
denote a Hessenberg system on $V$, with parameter array
$(\{\th_i\}_{i=0}^d;$ $\{\th^*_i\}_{i=0}^d; \{\phi_i\}_{i=1}^d)$.
Then the following hold.
\begin{itemize}
\item[\rm (i)]
With respect to a $\Phi$-split  basis for $V$,
the matrices representing  $A,A^*$ are
\[
A :
\begin{pmatrix}
\th_d & & & & & \text{\bf 0}   \\
\phi_1 & \th_{d-1} \\
  & \phi_2 &\th_{d-2} \\
  &    & \cdot & \cdot \\
  &    &     &  \cdot & \cdot \\
\text{\bf 0} & & & & \phi_d & \th_0
\end{pmatrix},
\qquad\qquad
A^* :
\begin{pmatrix}
\th^*_0 & 1 & & & & \text{\bf 0}   \\
 & \th^*_1& 1 \\
  &  &\th^*_2 &\cdot \\
  &    &  & \cdot & \cdot\\
  &    &   &  &  \cdot & 1 \\
\text{\bf 0} & & & &  & \th^*_d
\end{pmatrix}.    
\]
\item[\rm (ii)]
With respect to a $\Phi^*$-split  basis for $V$,
the matrices representing  $A,A^*$ are
\[
A :
\begin{pmatrix}
\th_0 & 1 & & & & \text{\bf 0}   \\
 & \th_1& 1 \\
  &  &\th_2 &\cdot \\
  &    &  & \cdot & \cdot\\
  &    &   &  &  \cdot & 1 \\
\text{\bf 0} & & & &  & \th_d
\end{pmatrix},
\qquad\qquad        
A^* :
\begin{pmatrix}
\th^*_d & & & & & \text{\bf 0}   \\
\phi_d & \th^*_{d-1} \\
  & \phi_{d-1} &\th^*_{d-2} \\
  &    & \cdot & \cdot \\
  &    &     &  \cdot & \cdot \\
\text{\bf 0} & & & & \phi_1 & \th^*_0
\end{pmatrix}.   
\]
\item[\rm (iii)]
With respect to an inverted $\Phi$-split basis for $V$,
the matrices representing  $A,A^*$ are
\[
A :
\begin{pmatrix}
\th_0 & \phi_d & & & & \text{\bf 0}   \\
 & \th_1& \phi_{d-1} \\
  &  &\th_2 &\cdot \\
  &    &  & \cdot & \cdot\\
  &    &   &  &  \cdot & \phi_1 \\
\text{\bf 0} & & & &  & \th_d
\end{pmatrix},
\qquad\qquad        
A^* :
\begin{pmatrix}
\th^*_d & & & & & \text{\bf 0}   \\
1 & \th^*_{d-1} \\
  & 1 &\th^*_{d-2} \\
  &    & \cdot & \cdot \\
  &    &     &  \cdot & \cdot \\
\text{\bf 0} & & & & 1 & \th^*_0
\end{pmatrix}.
\]
\item[\rm (iv)]
With respect to an inverted $\Phi^*$-split  basis for $V$,
the matrices representing  $A,A^*$ are
\[
A :
\begin{pmatrix}
\th_d & & & & & \text{\bf 0}   \\
1 & \th_{d-1} \\
  & 1 &\th_{d-2} \\
  &    & \cdot & \cdot \\
  &    &     &  \cdot & \cdot \\
\text{\bf 0} & & & & 1 & \th_0
\end{pmatrix},
\qquad\qquad
A^* :
\begin{pmatrix}
\th^*_0 & \phi_1 & & & & \text{\bf 0}   \\
 & \th^*_1& \phi_2 \\
  &  &\th^*_2 &\cdot \\
  &    &  & \cdot & \cdot\\
  &    &   &  &  \cdot & \phi_d \\
\text{\bf 0} & & & &  & \th^*_d
\end{pmatrix}.    
\]
\end{itemize}
\end{lemma}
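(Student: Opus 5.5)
Part (i) comes directly from the construction of the $\Phi$-split decomposition. Fix $0 \neq \eta_0 \in E_0V$ and set
\[
u_i = (A^*-\th^*_{i+1}I)\cdots(A^*-\th^*_d I)\eta_0 \qquad (0\le i\le d).
\]
Then $u_i$ spans $U_i$, and $(A^*-\th^*_i I)u_i = u_{i-1}$ for $1\le i\le d$. So $A^*u_i = \th^*_i u_i + u_{i-1}$, and $A^*u_0=\th^*_0u_0$ because $(A^*-\th^*_0I)U_0=0$. This gives the upper bidiagonal matrix for $A^*$ with $1$'s on the superdiagonal.

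For $A$ we use $(A-\th_{d-i}I)U_i = U_{i+1}$, so $(A-\th_{d-i}I)u_i = c_i u_{i+1}$ for some scalar $c_i$. To identify $c_i$, apply $(A^*-\th^*_{i+1}I)$. By definition of $\phi_{i+1}$, the operator $(A^*-\th^*_{i+1}I)(A-\th_{d-i}I)$ acts on $U_i$ as $\phi_{i+1}I$. This is the quoted fact from \cite[Lemma 4.6]{God} applied with index $i+1$. Hence
\[
\phi_{i+1}u_i = c_i(A^*-\th^*_{i+1}I)u_{i+1} = c_i u_i ,
\]
so $c_i=\phi_{i+1}$. Finally $(A-\th_0 I)u_d=0$ since $U_d=E_0V$. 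This gives the lower bidiagonal matrix for $A$ with diagonal $\th_d,\th_{d-1},\dots,\th_0$ and subdiagonal $\phi_1,\dots,\phi_d$.

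Part (ii) follows by applying (i) to $\Phi^*$. By Lemma \ref{lem:Phis}, the parameter array of $\Phi^*$ is $(\{\th^*_i\};\{\th_i\};\{\phi_{d-i+1}\})$. A $\Phi^*$-split basis is a $\Phi$-split basis for the dual system, so substituting into (i) gives $A$ upper bidiagonal with diagonal $\th_0,\dots,\th_d$ and superdiagonal $1$'s. It gives $A^*$ lower bidiagonal with diagonal $\th^*_d,\dots,\th^*_0$ and subdiagonal entries $\phi^*_i=\phi_{d-i+1}$, that is, $\phi_d,\dots,\phi_1$.

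Parts (iii) and (iv) come from inverting the bases in (i) and (ii). Reversing the order of a basis conjugates the matrix by the reversal permutation, so entry $(i,j)$ becomes entry $(d-i,d-j)$. Applied to (i), $A$ becomes upper bidiagonal with diagonal $\th_0,\dots,\th_d$ and superdiagonal $\phi_d,\dots,\phi_1$, and $A^*$ becomes lower bidiagonal with diagonal $\th^*_d,\dots,\th^*_0$ and subdiagonal $1$'s. Applied to (ii), it gives the matrices in (iv) in the same way.

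The only step needing care is the identification $c_i=\phi_{i+1}$ in (i). It depends on the dual form of the split relation, namely that $(A^*-\th^*_{i+1}I)(A-\th_{d-i}I)=\phi_{i+1}I$ holds on $U_i$. This was quoted from \cite{God} before the statement, so we take it as given. Everything else is index bookkeeping.
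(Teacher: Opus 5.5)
Your argument is correct. The paper gives no proof of its own beyond citing \cite[Propositions 4.4, 5.3, 5.6, 5.9]{God}, and your derivation is the standard way these matrix forms are obtained: read (i) off the split decomposition, get (ii) by applying (i) to $\Phi^*$ via Lemma \ref{lem:Phis}, and get (iii), (iv) by reversing the basis. One small simplification: since $u_i=(A^*-\th^*_{i+1}I)u_{i+1}$, the defining relation for $\phi_{i+1}$ on $U_{i+1}$ gives $(A-\th_{d-i}I)u_i=\phi_{i+1}u_{i+1}$ at once, so part (i) does not need the dual form from \cite[Lemma 4.6]{God}.
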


\begin{lemma}   {\rm (See \cite[Lemma 6.1, Theorem 6.3]{God}.) }
\label{lem:Hclassify}   \samepage
\ifDRAFT {\rm lem:Hclassify}. \fi
For a sequence 
\begin{equation}
(\{\th_i\}_{i=0}^d;
 \{ \th^*_i\}_{i=0}^d;
\{\phi_i\}_{i=1}^d)               \label{eq:parray2}
\end{equation}
of scalars in $\F$,
the following are equivalent:
\begin{itemize}
\item[\rm (i)]
there exists a Hessenberg system $\Phi$  on $V$ with parameter array \eqref{eq:parray2};
\item[\rm (ii)]
$\{\th_i\}_{i=0}^d$ are mutually distinct,
$\{\th^*_i\}_{i=0}^d$ are mutually distinct,
and $\{\phi_i\}_{i=1}^d$ are nonzero.
\end{itemize}
Assume that {\rm (i), (ii)} hold.
Then $\Phi$ is unique up to isomorphism of Hessenberg systems.
\end{lemma}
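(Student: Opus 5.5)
Both directions of the equivalence, and the uniqueness, can be read off from the matrix forms in Lemma \ref{lem:matrixAAs}.

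\textbf{(i)$\Rightarrow$(ii).} Let $\Phi$ be a Hessenberg system with parameter array \eqref{eq:parray2}.
\begin{itemize}
\item By Lemma \ref{lem:mfree} and Definition \ref{def:HS}(i), $A$ and $A^*$ are multiplicity-free. So $\{\th_i\}_{i=0}^d$, being the eigenvalues attached to the distinct primitive idempotents, are mutually distinct.
\item The same argument gives that $\{\th^*_i\}_{i=0}^d$ are mutually distinct.
\item The $\phi_i$ are nonzero by their construction: $(A-\th_{d-i+1}I)(A^*-\th^*_iI)$ maps $U_i$ onto $U_i$.
\end{itemize}

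\textbf{(ii)$\Rightarrow$(i).} Assume (ii). Fix a basis $\{u_i\}_{i=0}^d$ of $V$, and define $A,A^*\in\text{End}(V)$ by the matrices in Lemma \ref{lem:matrixAAs}(i).
\begin{itemize}
\item $A$ is lower triangular with distinct diagonal entries $\th_d,\dots,\th_0$, so it is multiplicity-free with eigenvalues $\{\th_i\}$. Likewise $A^*$ is upper triangular and multiplicity-free with eigenvalues $\{\th^*_i\}$. Let $E_i$ (resp.\ $E^*_i$) be the primitive idempotent for $\th_i$ (resp.\ $\th^*_i$).
\item For condition (v), note that $u_i\in E^*_0V+\cdots+E^*_iV$, since $A^*$ is upper triangular. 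In fact $E^*_0V+\cdots+E^*_iV=\text{span}\{u_0,\dots,u_i\}$, by comparing dimensions.
\item Since $A$ is lower bidiagonal, $A$ sends $\text{span}\{u_0,\dots,u_j\}$ into $\text{span}\{u_0,\dots,u_{j+1}\}$. Hence $E^*_iAE^*_j=0$ for $i-j>1$.
\item For $i=j+1$: the $u_{j+1}$-coefficient of $Au_j$ is $\phi_{j+1}\neq0$. Since $Au_j$ already lies in $\text{span}\{u_0,\dots,u_{j+1}\}$, it lies in $\text{span}\{u_0,\dots,u_j\}$ iff that coefficient vanishes, so $Au_j\notin\text{span}\{u_0,\dots,u_j\}$. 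Applying this to $w=E^*_jv$ for suitable $v$ gives $E^*_{j+1}AE^*_jV\neq0$. Here we use that $E^*_jV$ projects nontrivially onto $u_j$ modulo $\text{span}\{u_0,\dots,u_{j-1}\}$.
\item Condition (iv) follows in the same way by symmetry. Use the flag $E_dV\subseteq E_dV+E_{d-1}V\subseteq\cdots$, which equals $\text{span}\{u_d\},\ \text{span}\{u_{d-1},u_d\},\dots$ because $A$ is lower triangular. Then use that $A^*$ is upper bidiagonal with all superdiagonal entries $1\neq0$, so $A^*$ moves this flag by exactly one step, with a nonzero leading term.
\item Finally, check that the split decomposition of this $\Phi$ is $U_i=\text{span}\{u_i\}$ and that its split sequence is $\{\phi_i\}$. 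Note $E_0V=\text{span}\{u_d\}$, since column $d$ of $A$ is $\th_0e_d$. Also $(A^*-\th^*_{i+1}I)u_{i+1}=u_i$. So $u_i=(A^*-\th^*_{i+1})\cdots(A^*-\th^*_d)u_d$, which is a $\Phi$-split basis. Reading off $(A-\th_{d-i+1})(A^*-\th^*_i)u_i=\phi_iu_i$ gives the split sequence.
\end{itemize}

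\textbf{Uniqueness.} Let $\Phi$ and $\Phi'$ have the same parameter array. By Lemma \ref{lem:matrixAAs}(i), each has a split basis in which $A,A^*$ are represented by the same matrices. The linear isomorphism $V\to V'$ sending one split basis to the other induces an algebra isomorphism $\sigma$ with $A^\sigma=A'$ and $A^{*\sigma}=A'^*$. Each $E_i$ is a polynomial in $A$, so $E_i^\sigma=E'_i$, and likewise $E^{*\sigma}_i=E'^*_i$.

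\textbf{Main obstacle.} The only delicate step is the nonvanishing in (iv) and (v), which needs the flag argument above. The rest is direct verification.
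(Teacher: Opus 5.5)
The paper gives no proof of this lemma; it cites Godjali. Your route is the standard one: realize the array by the bidiagonal matrices of Lemma~\ref{lem:matrixAAs}(i), check the Hessenberg conditions with flags, read off the parameter array, and get uniqueness by matching split bases. The implication (i)$\Rightarrow$(ii), the verification of Definition~\ref{def:HS}(v), the computation of the split sequence, and the uniqueness argument are all correct. One step, however, is false as written and must be fixed: the flag you use for Definition~\ref{def:HS}(iv).

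You identify $E_dV\subseteq E_dV+E_{d-1}V\subseteq\cdots$ with $\text{span}\{u_d\}\subseteq\text{span}\{u_{d-1},u_d\}\subseteq\cdots$. But the diagonal of $A$ is $\th_d,\th_{d-1},\ldots,\th_0$ and $A$ is lower triangular. So $\text{span}\{u_{d-k},\ldots,u_d\}$ is $A$-invariant with eigenvalues $\th_k,\ldots,\th_0$, and therefore equals $E_0V+\cdots+E_kV$, not $E_dV+\cdots+E_{d-k}V$. Your own last step notes $E_0V=\text{span}\{u_d\}$. This is not cosmetic. With your flag, the one-step shift by the upper bidiagonal $A^*$ would give $E_iA^*E_j=0$ for $j-i>1$ and $E_{j-1}A^*E_j\neq 0$, which is the reverse of what (iv) demands.

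Use instead $F_k=E_0V+\cdots+E_kV=\text{span}\{u_{d-k},\ldots,u_d\}$.
\begin{itemize}
\item Since $A^*u_m=\th^*_mu_m+u_{m-1}$, you get $A^*F_k\subseteq F_{k+1}$, hence $E_iA^*E_j=0$ for $i-j>1$.
\item A vector $w$ spanning $E_jV$ lies in $F_j$ but not in $F_{j-1}$, so its $u_{d-j}$-coefficient is nonzero.
\item Then $A^*w$ has nonzero $u_{d-j-1}$-coefficient, so $A^*w\notin F_j$, giving $E_{j+1}A^*E_j\neq 0$, exactly as (iv) requires.
\end{itemize}
As a small bonus, the two correct flags give $U_i=\text{span}\{u_0,\ldots,u_i\}\cap\text{span}\{u_i,\ldots,u_d\}=\text{span}\{u_i\}$ directly from \eqref{eq:Ui}. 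You then need not route through the split-basis description to identify the split decomposition.
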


\section{Circular Hessenberg pairs and systems}
\label{sec:CHS}
\ifDRAFT {\rm sec:CHS}. \fi

In this section, we recall from \cite{JHL} the notion of a Circular Hessenberg pair and system.

\begin{defi}     {\rm (See \cite[Definition 2.9]{JHL}.) }
 \label{def:CHP}    \samepage
\ifDRAFT {\rm def:CHP}. \fi
By a {\em Circular Hessenberg pair} on $V$ we mean
an ordered pair $A,A^*$ of elements in $\text{End}(V)$ such that
\begin{itemize}
\item[\rm (i)]
there exists a basis for $V$ with respect to which
the matrix representing $A$ is CH and
the matrix representing $A^*$ is diagonal;
\item[\rm (ii)]
there exists a basis for $V$ with respect to which
the matrix representing $A^*$ is CH and
the matrix representing $A$ is diagonal.
\end{itemize}
\end{defi}

\begin{lemma}  {\rm (See \cite[Lemma 2.10]{JHL}.) }
\label{lem:CHPHP}    \samepage
\ifDRAFT {\rm lem:CHPHP}. \fi
Let $A,A^*$ denote a Circular Hessenberg  pair on $V$.
Then $A,A^*$ is a Hessenberg  pair on $V$.
Moreover, each of $A, A^*$ is multiplicity-free.
\end{lemma}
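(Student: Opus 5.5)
The plan is to compare the Circular Hessenberg conditions with the Hessenberg conditions of Definition \ref{def:HP}, using Definitions \ref{def:CH} and \ref{def:Hessen}. The first claim should follow directly from the matrix-level implication $\text{CH} \Rightarrow \text{H}$ recorded in the diagram of Section \ref{sec:matrix}. The second claim then follows from Lemma \ref{lem:mfree}.

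First I verify that every CH matrix is Hessenberg. Let $M \in \Mat_{d+1}(\F)$ be CH and take $0 \leq i,j \leq d$.
\begin{itemize}
\item[\rm (a)] If $1 < i-j$, then Definition \ref{def:CH}(i) gives $M_{i,j}=0$, which is Definition \ref{def:Hessen}(i).
\item[\rm (b)] If $1 = i-j$, then Definition \ref{def:CH}(ii) gives $M_{i,j} \neq 0$, which is Definition \ref{def:Hessen}(ii).
\end{itemize}
The extra conditions in Definition \ref{def:CH} concern only entries above the diagonal, namely $1<j-i<d$ and $j-i=d$. Definition \ref{def:Hessen} places no constraint on these entries, so they are irrelevant here. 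Hence $M$ is Hessenberg.

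Next, let $A,A^*$ be a Circular Hessenberg pair on $V$. Take the basis from Definition \ref{def:CHP}(i). With respect to it, the matrix representing $A$ is CH, hence Hessenberg by the first step, and the matrix representing $A^*$ is diagonal. So this basis satisfies Definition \ref{def:HP}(i). In the same way, the basis from Definition \ref{def:CHP}(ii) satisfies Definition \ref{def:HP}(ii). Therefore $A,A^*$ is a Hessenberg pair on $V$.

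Finally, Lemma \ref{lem:mfree} applied to the Hessenberg pair $A,A^*$ shows that each of $A,A^*$ is multiplicity-free. I do not expect any real obstacle. The only point needing care is the first step, where one checks that the CH conditions on the subdiagonal and below agree exactly with the H conditions. This uses the standing assumption $d \geq 2$, which guarantees that the corner entry with $j-i=d$ never lies on or below the subdiagonal.
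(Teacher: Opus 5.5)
Your proof is correct and is the same argument the paper uses for the parallel results (Lemmas \ref{lem:QCHP}, \ref{lem:LH}, \ref{lem:QLQCH}). For this lemma the paper just cites \cite[Lemma 2.10]{JHL}, but the route is identical: a CH matrix is Hessenberg entrywise, the bases from Definition \ref{def:CHP} therefore satisfy Definition \ref{def:HP}, and Lemma \ref{lem:mfree} then gives multiplicity-freeness. One small correction to your closing remark: the standing assumption $d \geq 2$ is not actually needed, since $j-i=d>0$ already places the corner entry above the diagonal.
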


\begin{defi}     {\rm (See \cite[Definition 2.11]{JHL}.) }
\label{def:CHS}    \samepage
\ifDRAFT {\rm def:CHS}. \fi
By a {\em Circular Hessenberg system} (or {\em CHS}) on $V$
we mean a Hessenberg system
\[
 (A; \{E_i\}_{i=0}^d; A^*; \{E^*_i\}_{i=0}^d)
\]
on $V$ such that
\begin{itemize}
\item[\rm (i)]
$\displaystyle
E_i A^* E_j =
\begin{cases}
 0 & \text{  if  $1 < j-i <d$},
\\
\neq  0  &  \text{ if $ j-i=d$}
\end{cases}
\qquad\qquad (0 \leq i,j \leq d)$;
\item[\rm (ii)]
$\displaystyle
E^*_i A E^*_j =
\begin{cases}
 0 & \text{  if   $1 < j-i <d$},
\\
\neq  0  &  \text{ if  $ j-i=d$}
\end{cases}
\qquad\qquad (0 \leq i,j \leq d)$.
\end{itemize}
\end{defi}

Circular Hessenberg pairs and systems are related as follows.
Let  
\[
(A; \{E_i\}_{i=0}^d; A^*; \{E^*_i\}_{i=0}^d)
\]
denote a CHS  on $V$.
Then $A,A^*$ is a Circular Hessenberg  pair on $V$.
Conversely,
let $A,A^*$ denote a Circular Hessenberg pair on $V$.
Let $\{v_i\}_{i=0}^d$ (resp.\ $\{v^*_i\}_{i=0}^d$) denote a basis for $V$
that satisfies Definition \ref{def:CHP}(ii) (resp.\ Definition \ref{def:CHP}(i)).
For $0 \leq i \leq d$,
the vector $v_i$ (resp.\ $v^*_i$) is an eigenvector for $A$ (resp.\ $A^*$);
let $E_i$  (resp.\ $E^*_i$) denote the corresponding primitive idempotent of $A$ (resp.\ $A^*$).
Then   $(A; \{E_i\}_{i=0}^d; A^*; \{E^*_i\}_{i=0}^d)$
is a CHS on $V$.

We comment on the classification of CHS.
Under some additional conditions (see \cite[Definition 2.18]{JHL}),
the CHS are classified in \cite[Theorem 5.6]{JHL};
there are precisely four families, see   \cite[ Examples 5.1--5.4]{JHL}.
By \cite{NT:CHS}  the additional conditions are automatically satisfied.
This gives a classification of CHS.

\section{Quasi-Circular Hessenberg pairs and systems}
\label{sec:QCHS}
\ifDRAFT {\rm sec:QCHS}. \fi

In this section, we introduce the notion of a Quasi-Circular Hessenberg pair and system.

\begin{defi} 
 \label{def:QCHP}    \samepage
\ifDRAFT {\rm def:QCHP}. \fi
By a {\em Quasi-Circular Hessenberg  pair} on $V$ we mean
an ordered pair $A,A^*$ of elements  in $\text{End}(V)$ such that
\begin{itemize}
\item[\rm (i)]
there exists a basis for $V$ with respect to which
the matrix representing $A$ is QCH and
the matrix representing $A^*$ is diagonal;
\item[\rm (ii)]
there exists a basis for $V$ with respect to which
the matrix representing $A^*$ is QCH and
the matrix representing $A$ is diagonal.
\end{itemize}
\end{defi}

\begin{lemma}   \label{lem:QCHP}   \samepage
\ifDRAFT {\rm lem:QCHP}. \fi
Let $A,A^*$ denote a Quasi-Circular Hessenberg  pair on $V$.
Then $A,A^*$ is a Hessenberg  pair on $V$.
Moreover, each of $A,A^*$ is multiplicity-free.
\end{lemma}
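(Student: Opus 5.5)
The plan is to observe that a QCH matrix is in particular Hessenberg, and then apply Lemma \ref{lem:mfree}. First we check directly from the definitions that every QCH matrix is H. Let $M \in \Mat_{d+1}(\F)$ be QCH.
\begin{itemize}
\item Condition (i) of Definition \ref{def:QCH} says $M_{i,j}=0$ whenever $1<i-j$ (and also in some further positions). So Definition \ref{def:Hessen}(i) holds.
\item Condition (ii) of Definition \ref{def:QCH} is verbatim Definition \ref{def:Hessen}(ii).
\end{itemize}
Hence $M$ is Hessenberg. This is the implication QCH $\Rightarrow$ H recorded in the diagram of Section \ref{sec:matrix}.

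Now let $A,A^*$ be a Quasi-Circular Hessenberg pair on $V$. By Definition \ref{def:QCHP}(i) there is a basis of $V$ in which $A$ is represented by a QCH matrix and $A^*$ by a diagonal matrix. By the observation above, that QCH matrix is Hessenberg, so this basis satisfies Definition \ref{def:HP}(i). In the same way, the basis from Definition \ref{def:QCHP}(ii) satisfies Definition \ref{def:HP}(ii). Therefore $A,A^*$ is a Hessenberg pair on $V$.

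The multiplicity-free assertion then follows at once from Lemma \ref{lem:mfree}. No step is a real obstacle; the only point needing care is confirming that the extra zero pattern in Definition \ref{def:QCH}(i) only adds constraints and never weakens the Hessenberg conditions, which the comparison above settles. This mirrors the argument one would give for Lemma \ref{lem:CHPHP}.
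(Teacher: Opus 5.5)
Your proposal is correct and matches the paper's proof: the paper also notes that a QCH matrix is Hessenberg, so a Quasi-Circular Hessenberg pair is a Hessenberg pair, and then gets multiplicity-freeness from Lemma \ref{lem:mfree}. Your entry-by-entry comparison of Definitions \ref{def:QCH} and \ref{def:Hessen} just spells out the step the paper states in one line.
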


\begin{proof}
The first assertion holds since a QCH matrix is a Hessenberg matrix.
The second assertion follows from  Lemma \ref{lem:mfree}.
\end{proof}

\begin{defi}  
\label{def:QCHS}    \samepage
\ifDRAFT {\rm def:QCHS}. \fi
By a {\em Quasi-Circular Hessenberg   system (or {\rm QCHS}}) on $V$
we mean a Hessenberg system
\[
 (A; \{E_i\}_{i=0}^d; A^*; \{E^*_i\}_{i=0}^d)
\]
on $V$ such that
\begin{itemize}
\item[\rm (i)]
$E_i A^* E_j =  0$ \quad  if  $1 < j-i <d \qquad (0 \leq i,j \leq d)$;
\item[\rm (ii)]
$E^*_i A E^*_j =  0$  \quad if  $1 < j-i <d \qquad (0 \leq i,j \leq d)$.
\end{itemize}
\end{defi}

Quasi-Circular Hessenberg  pairs and systems  are related as follows.
Let  
\[
(A; \{E_i\}_{i=0}^d; A^*; \{E^*_i\}_{i=0}^d)
\]
denote a QCHS on $V$.
Then $A,A^*$ is a Quasi-Circular Hessenberg  pair on $V$.
Conversely,
let $A,A^*$ denote a Quasi-Circular Hessenberg  pair on $V$.
Let $\{v_i\}_{i=0}^d$ (resp.\ $\{v^*_i\}_{i=0}^d$) denote a basis for $V$
that satisfies Definition \ref{def:QCHP}(ii) (resp.\ Definition \ref{def:QCHP}(i)).
For $0 \leq i \leq d$,
the vector $v_i$ (resp.\ $v^*_i$) is an eigenvector for $A$ (resp.\ $A^*$);
let $E_i$  (resp.\ $E^*_i$) denote the corresponding primitive idempotent of $A$ (resp.\ $A^*$).
Then   $(A; \{E_i\}_{i=0}^d; A^*; \{E^*_i\}_{i=0}^d)$
is a QCHS on $V$.

We comment on the classification of  QCHS.
In Section \ref{sec:TD} we will introduce a family of Hessenberg systems,
said  to be TD.
As we will describe in \eqref{eq:v}, 
the TD family and the QCHS family coincide.
The TD family is classified in Propositions \ref{prop:vth} and \ref{prop:TDd2}.
This gives  a classification of the QCHS.

\section{Leonard pairs and systems}
\label{sec:LS}
\ifDRAFT {\rm sec:LS}. \fi

In this section, we recall from \cite{T:Leonard} the notion of a Leonard pair
and Leonard system.

\begin{defi}    {\rm (See \cite[Definition 1.1]{T:Leonard}.) }
 \label{def:LP}    \samepage
\ifDRAFT {\rm def:LP}. \fi
By a {\em Leonard  pair} on $V$ we mean
an ordered pair $A,A^*$ of elements in $\text{End}(V)$ such that
\begin{itemize}
\item[\rm (i)]
there exists a basis for $V$ with respect to which
the matrix representing $A$ is Irreducible Tridiagonal and
the matrix representing $A^*$ is diagonal;
\item[\rm (ii)]
there exists a basis for $V$ with respect to which
the matrix representing $A^*$ is Irreducible Tridiagonal and
the matrix representing $A$ is diagonal.
\end{itemize}
\end{defi}

\begin{lemma}   \label{lem:LH}   \samepage
\ifDRAFT {\rm lem:LH}. \fi
Let $A,A^*$ denote a Leonard  pair on $V$.
Then $A,A^*$ is a Hessenberg  pair on $V$.
Moreover, each of $A,A^*$ is multiplicity-free.
\end{lemma}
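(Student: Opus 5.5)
The argument mirrors the proof of Lemma \ref{lem:QCHP}: the plan is to show that an Irreducible Tridiagonal matrix is automatically Hessenberg, and then to quote Lemma \ref{lem:mfree}.

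\emph{Step 1: every IT matrix is Hessenberg.} Let $M \in \Mat_{d+1}(\F)$ be Irreducible Tridiagonal in the sense of Definition \ref{def:IT}, and compare with Definition \ref{def:Hessen}.
\begin{itemize}
\item[\rm (a)] If $1 < i-j$, then $1 < |i-j|$. So $M_{i,j}=0$ by Definition \ref{def:IT}(i).
\item[\rm (b)] If $1 = i-j$, then $1 = |i-j|$. So $M_{i,j}\neq 0$ by Definition \ref{def:IT}(ii).
\end{itemize}
Thus $M$ satisfies Definition \ref{def:Hessen}(i), (ii), so $M$ is Hessenberg. This is the implication IT $\Rightarrow$ H already displayed in Section \ref{sec:matrix}.

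\emph{Step 2: a Leonard pair is a Hessenberg pair.} Let $A,A^*$ be a Leonard pair on $V$. Definition \ref{def:LP}(i) gives a basis for $V$ in which $A$ is represented by an IT matrix and $A^*$ by a diagonal matrix. By Step 1 that IT matrix is Hessenberg, so this basis satisfies Definition \ref{def:HP}(i). In the same way, the basis from Definition \ref{def:LP}(ii) satisfies Definition \ref{def:HP}(ii). Hence $A,A^*$ is a Hessenberg pair on $V$.

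\emph{Step 3: multiplicity-freeness.} Since $A,A^*$ is a Hessenberg pair, Lemma \ref{lem:mfree} shows that each of $A,A^*$ is multiplicity-free.

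There is no real obstacle here. The only point needing care is in Step 1: we must check that the nonvanishing condition on the subdiagonal, Definition \ref{def:IT}(ii), is enough to give the nonvanishing condition in Definition \ref{def:Hessen}(ii). It is, because the IT condition (nonzero entries whenever $|i-j|=1$) is stronger than what the Hessenberg condition requires (nonzero entries whenever $i-j=1$).
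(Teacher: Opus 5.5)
Your proof is correct and follows the paper's argument exactly. The paper also gets the first assertion from the fact that an Irreducible Tridiagonal matrix is Hessenberg, then invokes Lemma \ref{lem:mfree} for multiplicity-freeness; your Step 1 just writes out the entrywise check that the paper leaves implicit.
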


\begin{proof}
The first assertion holds since an Irreducible Tridiagonal   matrix is a Hessenberg matrix.
The second assertion follows from  Lemma \ref{lem:mfree}.
\end{proof}

\begin{defi}   {\rm (See \cite[Definition 1.4]{T:Leonard}.) }
\label{def:LS}    \samepage
\ifDRAFT {\rm def:LS}. \fi
By a {\em Leonard  system} (or {\em LS}) on $V$
we mean a Hessenberg system
\[
 (A; \{E_i\}_{i=0}^d; A^*; \{E^*_i\}_{i=0}^d)
\]
on $V$ such that
\begin{itemize}
\item[\rm (i)] 
$\displaystyle
E_i A^* E_j = 
\begin{cases}
0 & \text{ if $1 < j-i$},
\\
\neq 0 & \text{ if $1 =j-i$}
\end{cases}
\qquad\qquad (0 \leq i,j \leq d)$;
\item[\rm (ii)] 
$\displaystyle
E^*_i A E^*_j = 
\begin{cases}
0 & \text{ if $1 < j-i$},
\\
\neq 0 & \text{ if $1=j-i$}
\end{cases}
\qquad\qquad (0 \leq i,j \leq d)$.
\end{itemize}
\end{defi}

Leonard  pairs and Leonard systems are related as follows.
Let  
\[
(A; \{E_i\}_{i=0}^d; A^*; \{E^*_i\}_{i=0}^d)
\]
denote a Leonard  system on $V$.
Then $A,A^*$ is a Leonard  pair on $V$.
Conversely,
let $A,A^*$ denote a Leonard  pair on $V$.
Let $\{v_i\}_{i=0}^d$ (resp.\ $\{v^*_i\}_{i=0}^d$) denote a basis for $V$
that satisfies Definition \ref{def:LP}(ii) (resp.\ Definition \ref{def:LP}(i)).
For $0 \leq i \leq d$,
the vector $v_i$ (resp.\ $v^*_i$) is an eigenvector for $A$ (resp.\ $A^*$);
let $E_i$  (resp.\ $E^*_i$) denote the corresponding primitive idempotent of $A$ (resp.\ $A^*$).
Then   $(A; \{E_i\}_{i=0}^d; A^*; \{E^*_i\}_{i=0}^d)$
is a Leonard system on $V$.

In the next result,
we  classify  the Leonard systems up to isomorphism.

\begin{lemma}   {\rm (See \cite[Theorem 1.9]{T:Leonard}.) }
\label{lem:LSclassify}   \samepage
\ifDRAFT {\rm lem:LSclassify}. \fi
Consider a sequence 
\begin{equation}
(\{\th_i\}_{i=0}^d;
 \{ \th^*_i\}_{i=0}^d;
\{\phi_i\}_{i=1}^d)               \label{eq:parray3}
\end{equation}
of scalars in $\F$.
Then there exists a Leonard system $\Phi$  on $V$ with parameter array \eqref{eq:parray3}
if and only if 
 the following {\rm (i)--(v)} hold:
\begin{itemize}
\item[\rm (i)]
$\{\th_i\}_{i=0}^d$ are mutually distinct,  and
$\{\th^*_i\}_{i=0}^d$ are mutually distinct;
\item[\rm (ii)]
$\phi_i \neq 0$ $(1 \leq i \leq d)$;
\item[\rm (iii)]
the scalars
\[
\frac{\th_{i-2} - \th_{i+1} } { \th_{i-1} - \th_i},
\qquad
\frac{\th^*_{i-2} - \th^*_{i+1} } { \th^*_{i-1} - \th^*_i}
\]
are equal and independent of $i$ for $2 \leq i \leq d-1$;
\item[\rm (iv)]
for $1 \leq i \leq d$ we have $\vphi_i \neq 0$, where
\[
  \vphi_i = \phi_1 \sum_{\ell=0}^{i-1} \frac{\th_\ell - \th_{d-\ell} } { \th_0 - \th_d} 
                  + (\th^*_i - \th^*_0)(\th_{i-1} - \th_d);
\]
\item[\rm (v)]
$\displaystyle
  \phi_i = \vphi_1 \sum_{\ell=0}^{i-1} \frac{\th_\ell - \th_{d-\ell} } { \th_{0} - \th_d} 
                  + (\th^*_i - \th^*_0)(\th_{d-i+1} - \th_0)  \qquad\qquad (1 \leq i \leq d)$.
\end{itemize}
Assume that {\rm (i)--(v)} hold.
Then $\Phi$ is unique up to isomorphism of Hessenberg systems.
\end{lemma}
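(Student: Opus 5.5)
The plan is to derive the lemma from \cite[Theorem 1.9]{T:Leonard}. The only real work is matching this paper's conventions for the split decomposition with those of \cite{T:Leonard}.

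\textbf{Step 1: compare the two split decompositions.} In \cite{T:Leonard} the split decomposition of a Leonard system $\Psi=(A;\{F_i\}_{i=0}^d;A^*;\{E^*_i\}_{i=0}^d)$ is
\[
(E^*_0V+\cdots+E^*_iV)\cap(F_iV+\cdots+F_dV).
\]
Relative to a basis adapted to it, $A$ is lower bidiagonal with diagonal given by the eigenvalue sequence of $\Psi$ and subdiagonal entries $1$. Also $A^*$ is upper bidiagonal with diagonal $\th^*_0,\ldots,\th^*_d$ and superdiagonal the split sequence of $\Psi$. Equation \eqref{eq:Ui} uses $E_0V+\cdots+E_{d-i}V$ instead. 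So, given a Leonard system $\Phi=(A;\{E_i\}_{i=0}^d;A^*;\{E^*_i\}_{i=0}^d)$, I set
\[
\Phi^{\Downarrow}=(A;\{E_{d-i}\}_{i=0}^d;A^*;\{E^*_i\}_{i=0}^d).
\]
Definition \ref{def:LS} only imposes conditions on $E^*_iAE^*_j$ for $\Phi$ to be a Leonard system. It also imposes conditions on $E_iA^*E_j$ for both $|i-j|>1$ and $|i-j|=1$, and these are symmetric under $i\mapsto d-i$. Hence $\Phi^{\Downarrow}$ is a Leonard system in the sense of \cite{T:Leonard} exactly when $\Phi$ is a Leonard system here.

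Comparing with Lemma \ref{lem:matrixAAs}(iv), the parameter array of $\Phi^{\Downarrow}$ in the sense of \cite{T:Leonard} is
\[
(\{\th_{d-i}\}_{i=0}^d;\{\th^*_i\}_{i=0}^d;\{\phi_i\}_{i=1}^d),
\]
where $(\{\th_i\}_{i=0}^d;\{\th^*_i\}_{i=0}^d;\{\phi_i\}_{i=1}^d)$ is the parameter array of $\Phi$ from Definition \ref{def:parray}. I expect this identification to be the main point requiring care. To settle it, I would check directly that the basis in Lemma \ref{lem:matrixAAs}(iv) is the one \cite{T:Leonard} calls $\Phi^{\Downarrow}$-split. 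Concretely, the inverted $\Phi^*$-split basis spans the spaces $(E^*_0V+\cdots+E^*_iV)\cap(E_{d-i}V+\cdots+E_0V)$ in order.

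\textbf{Step 2: translate the conditions of \cite[Theorem 1.9]{T:Leonard}.} Replace $\th_i$ by $\th_{d-i}$ in those conditions.
\begin{itemize}
\item Conditions (i) and (ii) are unchanged.
\item For condition (iii), Lemma \ref{lem:recinv} shows that recurrence and the common ratio are unaffected by reversing the sequence.
\item In the sums $\sum_{\ell=0}^{i-1}(\th_\ell-\th_{d-\ell})/(\th_0-\th_d)$, reversal changes the numerator and the denominator by the same sign, so the sums are invariant.
\item In \cite{T:Leonard}, the displayed formulas for $\phi_i$ and $\vphi_i$ contain $(\th_{i-1}-\th_d)$ and $(\th_{d-i+1}-\th_0)$ respectively. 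Under reversal these become $(\th_{d-i+1}-\th_0)$ and $(\th_{i-1}-\th_d)$.
\end{itemize}
The net effect of Step 2 is that the roles of the two formulas swap, producing exactly conditions (iv) and (v) of the lemma. Here $\vphi_1$ in (v) is the first term of the sequence defined in (iv). Therefore a sequence \eqref{eq:parray3} is the parameter array of some Leonard system $\Phi$ if and only if the reversed array satisfies \cite[Theorem 1.9]{T:Leonard}, which holds if and only if (i)--(v) hold.

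\textbf{Step 3: uniqueness.} A Leonard system is in particular a Hessenberg system. Its parameter array in the sense of Definition \ref{def:parray} therefore determines it up to isomorphism of Hessenberg systems, by Lemma \ref{lem:Hclassify}.
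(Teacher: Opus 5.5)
Your proposal is correct and follows the paper, which simply takes this lemma from \cite[Theorem 1.9]{T:Leonard}. The detour through $\Phi^{\Downarrow}$ is sound but can be skipped. By Lemma \ref{lem:matrixAAs}(iv), the split sequence $\{\phi_i\}_{i=1}^d$ here is what \cite{T:Leonard} calls the second split sequence $\phi_i$ of $\Phi$ itself, defined there as the first split sequence of $\Phi^{\Downarrow}$. The $\vphi_i$ of (iv) is that paper's first split sequence $\varphi_i$. So (i)--(v) are the conditions of that theorem verbatim, with the fourth sequence eliminated via (iv).

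One caution: \cite{T:Leonard} writes $\varphi_i$ for the first split sequence. Your bullet assigning $(\th_{i-1}-\th_d)$ to ``the formula for $\phi_i$'' is therefore right only under your own relabeling, not in that paper's symbols.
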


\section{Tridiagonal Hessenberg pairs and systems}
\label{sec:THS}
\ifDRAFT {\rm sec:THS}. \fi

In this section, we introduce the notion of a Tridiagonal Hessenberg pair and system.
\begin{defi} 
 \label{def:THP}    \samepage
\ifDRAFT {\rm def:THP}. \fi
By a {\em Tridiagonal Hessenberg  pair} on $V$ we mean
an ordered pair $A,A^*$ of elements  in $\text{End}(V)$ such that
\begin{itemize}
\item[\rm (i)]
there exists a basis for $V$ with respect to which
the matrix representing $A$ is TH  and
the matrix representing $A^*$ is diagonal;
\item[\rm (ii)]
there exists a basis for $V$ with respect to which
the matrix representing $A^*$ is TH and
the matrix representing $A$ is diagonal.
\end{itemize}
\end{defi}

\begin{lemma}   \label{lem:QLQCH}   \samepage
\ifDRAFT {\rm lem:QLQCH}. \fi
Let $A,A^*$ denote a Tridiagonal Hessenberg   pair on $V$.
Then $A,A^*$ is a Hessenberg pair on $V$.
Moreover, each of $A,A^*$ is multiplicity-free.
\end{lemma}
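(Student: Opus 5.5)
The plan mirrors the proofs of Lemma \ref{lem:QCHP} and Lemma \ref{lem:LH}: first compare the matrix types of Section \ref{sec:matrix}, then pass through the definition of a Hessenberg pair and quote Lemma \ref{lem:mfree}.

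For the first assertion, I check that every TH matrix is a Hessenberg matrix. Let $M \in \Mat_{d+1}(\F)$ be TH and pick $0 \leq i,j \leq d$.
\begin{itemize}
\item[\rm (a)] If $1 < i-j$, then $1 < |i-j|$, so $M_{i,j}=0$ by Definition \ref{def:TH}(i). This is Definition \ref{def:Hessen}(i).
\item[\rm (b)] If $1 = i-j$, then $M_{i,j} \neq 0$ by Definition \ref{def:TH}(ii). This is Definition \ref{def:Hessen}(ii).
\end{itemize}
So $M$ is H, which is the implication TH $\Rightarrow$ H already recorded in the diagram of Section \ref{sec:matrix}. One could also route this through QCH and appeal to Lemma \ref{lem:QCHP}, but the direct check is shorter.

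Now let $A,A^*$ be a Tridiagonal Hessenberg pair. The basis in Definition \ref{def:THP}(i) represents $A$ by a TH matrix, hence by a Hessenberg matrix, and represents $A^*$ by a diagonal matrix. Thus it satisfies Definition \ref{def:HP}(i). In the same way, the basis from Definition \ref{def:THP}(ii) satisfies Definition \ref{def:HP}(ii). Therefore $A,A^*$ is a Hessenberg pair on $V$.

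The second assertion then follows immediately from Lemma \ref{lem:mfree}. There is no real obstacle; the only point requiring care is that the TH condition only forces nonzero subdiagonal entries, and that is exactly what the Hessenberg condition requires.
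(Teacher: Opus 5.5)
Your proposal is correct and follows the paper's proof: the paper also notes that a TH matrix is Hessenberg, so a Tridiagonal Hessenberg pair is a Hessenberg pair, and then invokes Lemma \ref{lem:mfree}. You have simply written out the entrywise comparison of Definitions \ref{def:TH} and \ref{def:Hessen} in more detail.
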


\begin{proof}
The first assertion holds since a TH matrix  is a Hessenberg matrix.
The second assertion follows from  Lemma \ref{lem:mfree}.
\end{proof}

\begin{defi}  
\label{def:THS}    \samepage
\ifDRAFT {\rm def:THS}. \fi
By a {\em Tridiagonal Hessenberg   system} (or {\em THS}) on $V$
we mean a Hessenberg system
\[
 (A; \{E_i\}_{i=0}^d; A^*; \{E^*_i\}_{i=0}^d)
\]
such that
\begin{itemize}
\item[\rm (i)] 
$E_i A^* E_j = 0$ \quad if $1 < j-i$ \quad $(0 \leq i,j \leq d)$;
\item[\rm (ii)] 
$E^*_i A E^*_j = 0$ \quad if $1 < j-i$ \quad $(0 \leq i,j \leq d)$.
\end{itemize}
\end{defi}

Tridiagonal Hessenberg  pairs and systems are related as follows.
Let  
\[
(A; \{E_i\}_{i=0}^d; A^*; \{E^*_i\}_{i=0}^d)
\]
denote a THS on $V$.
Then $A,A^*$ is a Tridiagonal Hessenberg pair on $V$.
Conversely,
let $A,A^*$ denote a Tridiagonal Hessenberg  pair on $V$.
Let $\{v_i\}_{i=0}^d$ (resp.\ $\{v^*_i\}_{i=0}^d$) denote a basis for $V$
that satisfies Definition \ref{def:THP}(ii) (resp.\ Definition \ref{def:THP}(i)).
For $0 \leq i \leq d$,
the vector $v_i$ (resp.\ $v^*_i$) is an eigenvector for $A$ (resp.\ $A^*$);
let $E_i$  (resp.\ $E^*_i$) denote the corresponding primitive idempotent of $A$ (resp.\ $A^*$).
Then   $(A; \{E_i\}_{i=0}^d; A^*; \{E^*_i\}_{i=0}^d)$
is a THS on $V$.

\bigskip

We have introduced some families of Hessenberg systems.
We comment on how these families are related.
The following diagram shows the logical implications:
\begin{equation}                   \label{eq:diag1b}
\text{
\begin{tabular}{cccccccc}
& &   &  & CHS
\\
& & & & $\Downarrow$
\\
LS
&
$\Rightarrow$ 
&
THS
&
$\Rightarrow$
& 
QCHS
&
$\Rightarrow$ 
&
HS
\end{tabular}
}
\end{equation}

\section{The TD property and REC property}
\label{sec:TD}
\ifDRAFT {\rm sec:TD}. \fi

We now turn our attention to a pair of relations called the tridiagonal relations.
These relations are introduced in the following result.

\begin{lemma}   {\rm (See \cite[Theorem 1.12]{T:Leonard}.) }
\label{lem:LPTD}    \samepage
\ifDRAFT {\rm lem:LPTD}. \fi
Let $A,A^*$ denote a Leonard pair on $V$.
Then there exist scalars 
$\beta, \gamma, \gamma^*, \varrho, \varrho^*$ in $\F$
such that both
\begin{align}
0 &=[A, A^2 A^* - \beta A A^* A + A^* A^2 - \gamma (A A^* + A^* A) - \varrho A^*],            \label{eq:td1}
\\
0&= [A^*, A^{*2} A - \beta A^* A A^* + A A^{*2} - \gamma^* (A^* A + A A^*) - \varrho^* A].     \label{eq:td2}
\end{align}
Moreover the sequence
$\beta, \gamma, \gamma^*, \varrho, \varrho^*$
is uniquely determined by $A,A^*$ provided that $d \geq 3$.
\end{lemma}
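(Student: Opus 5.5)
We take a Leonard system $(A;\{E_i\}_{i=0}^d;A^*;\{E^*_i\}_{i=0}^d)$ associated with the Leonard pair $A,A^*$, with parameter array $(\{\th_i\}_{i=0}^d;\{\th^*_i\}_{i=0}^d;\{\phi_i\}_{i=1}^d)$. The plan is to translate \eqref{eq:td1} into a statement about the triple products $E_i A^* E_j$, and then to obtain \eqref{eq:td2} by applying the same argument to the dual system. Using $I=\sum_i E_i$ and $AE_i=E_iA=\th_iE_i$, the operator on the right of \eqref{eq:td1} has
\[
E_i(\cdots)E_j = (\th_i-\th_j)\,P(\th_i,\th_j)\,E_iA^*E_j,
\qquad
P(x,y)=x^2-\beta xy+y^2-\gamma(x+y)-\varrho .
\]
So \eqref{eq:td1} holds if and only if $(\th_i-\th_j)P(\th_i,\th_j)E_iA^*E_j=0$ for all $i,j$. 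The factor $\th_i-\th_j$ takes care of $i=j$. By the Leonard system conditions, $E_iA^*E_j=0$ whenever $|i-j|>1$. It therefore suffices to find $\beta,\gamma,\varrho$ with $P(\th_{i-1},\th_i)=0$ for $1\le i\le d$; since $P$ is symmetric, this is exactly the statement that $\{\th_i\}_{i=0}^d$ is $(\beta,\gamma,\varrho)$-recurrent.

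To produce such scalars, we use Lemma \ref{lem:LSclassify}(iii). It says the sequence $\{\th_i\}$ is recurrent and that the common value of \eqref{eq:indep0} equals that for $\{\th^*_i\}$. For $d\ge3$, let $\beta+1$ denote this common value. For $d=2$, Note \ref{note:rec} lets us choose any $\beta$, and we use the same $\beta$ for both sequences. By Lemma \ref{lem:recseq0} both sequences are then $\beta$-recurrent. Lemma \ref{lem:recseq1} gives $\gamma,\gamma^*$ with $(\beta,\gamma)$- and $(\beta,\gamma^*)$-recurrence, and Lemma \ref{lem:recseq2}(i) gives $\varrho,\varrho^*$. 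This proves \eqref{eq:td1}. The same computation for the dual Leonard system gives \eqref{eq:td2} with the \emph{same} $\beta$, which is why (iii) must be used for both sequences simultaneously.

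For uniqueness when $d\ge3$, suppose $\beta,\gamma,\varrho$ satisfy \eqref{eq:td1}. Since $E_{i-1}A^*E_i\ne0$ and $\th_{i-1}\ne\th_i$, we get $P(\th_{i-1},\th_i)=0$ for $1\le i\le d$. Because the $\th_i$ are mutually distinct, Lemma \ref{lem:recseq2}(ii) applies, so the sequence is $(\beta,\gamma)$-recurrent and hence $\beta$-recurrent. By Lemma \ref{lem:recseq0} with $d\ge3$, $\beta+1$ is then forced to equal the ratio \eqref{eq:indep0}, which determines $\beta$. The relation $\gamma=\th_{i-1}-\beta\th_i+\th_{i+1}$ then determines $\gamma$, and $\varrho=P(\th_0,\th_1)+\varrho$ rearranged determines $\varrho$. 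The dual argument fixes $\gamma^*,\varrho^*$.

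The main obstacle is the nonvanishing requirement. Uniqueness needs the superdiagonal products $E_{i-1}A^*E_i$ to be nonzero, and the same nonvanishing is what forces the recurrence in the converse direction. This is exactly where the irreducible tridiagonal condition is used, rather than the weaker THS condition.
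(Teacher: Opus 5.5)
Your proof is correct. The paper gives no argument of its own for this lemma; it only cites \cite[Theorem 1.12]{T:Leonard}. Your proposal is therefore a self-contained derivation rather than a reconstruction, and it differs in direction from the original development.

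\emph{Your route.} The identity $E_i[A,C]E_j=(\th_i-\th_j)P(\th_i,\th_j)E_iA^*E_j$, with $C$ the operator inside the commutator in \eqref{eq:td1}, reduces \eqref{eq:td1} to the scalar condition that $\{\th_i\}_{i=0}^d$ be $(\beta,\gamma,\varrho)$-recurrent. The common $\beta$ for $\{\th_i\}$ and $\{\th^*_i\}$ then comes from the classification, Lemma \ref{lem:LSclassify}(iii). This is legitimate here, since Lemma \ref{lem:LSclassify} is stated earlier and taken as known. It also makes transparent that, for a Leonard system, the TD relations amount to the eigenvalue recurrences, with uniqueness for $d\geq 3$ following at once. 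The price is that everything rests on the full classification theorem.

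\emph{The original route.} The paper's later uses of \cite[Lemma 12.2]{T:Leonard} (in the proof of Proposition \ref{prop:QCHTD}) and \cite[Lemma 12.7]{T:Leonard} suggest the opposite order. There, \eqref{eq:td1} is obtained directly from the vanishing of $E_iA^*E_j$ for $j-i>1$ with $A,A^*$ in split (bidiagonal) form. The three-term recurrences are then deduced from it.

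\emph{A correction to your closing remark.} The uniqueness step does not need the superdiagonal products. In every Hessenberg system $E_iA^*E_{i-1}\neq 0$ already holds by Definition \ref{def:HS}(iv). Since $P$ is symmetric, this forces $P(\th_{i-1},\th_i)=0$ just as well, so your uniqueness argument works verbatim for any Hessenberg system satisfying \eqref{eq:td1}, \eqref{eq:td2} with $d\geq 3$.

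The Leonard hypothesis enters your proof only in the existence half. It is used through the vanishing of $E_iA^*E_j$ for $j-i>1$, and through the common $\beta$ supplied by Lemma \ref{lem:LSclassify}(iii). Moreover, the conclusion of the lemma holds for every THS as well (see \eqref{eq:iii} and Proposition \ref{prop:QCHTD}). So the irreducible tridiagonal condition is not the point on which the nonvanishing argument hinges.
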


The relations \eqref{eq:td1}, \eqref{eq:td2} are called the {\em tridiagonal relations}
 (or {\em TD relations}).

\begin{defi}    \label{def:TDH}    \samepage
\ifDRAFT {\rm def:TDH}. \fi
Let
$\Phi = (A; \{E_i\}_{i=0}^d; A^*; \{E^*_i\}_{i=0}^d)$ denote a Hessenberg system  on $V$.
Then $\Phi$ is said to be {\em TD}
whenever
there exist scalars $\beta, \gamma, \gamma^*, \varrho, \varrho^*$ in $\F$
such that 
$A,A^*$ satisfy \eqref{eq:td1}, \eqref{eq:td2}. 
\end{defi}

By Lemma \ref{lem:LPTD} we have the  implication \; LS $\Rightarrow$ TD.

\begin{defi}    \label{def:recH}    \samepage
\ifDRAFT {\rm def:recH}. \fi
Let $\Phi = (A; \{E_i\}_{i=0}^d; A^*; \{E^*_i\}_{i=0}^d)$ denote a Hessenberg system on $V$,
with eigenvalue sequence $\{\th_i\}_{i=0}^d$
and
dual eigenvalue sequence $\{\th^*_i\}_{i=0}^d$.
For $\beta \in \F$,
$\Phi$ is said to be {\em $\beta$-recurrent}
whenever each of  $\{\th_i\}_{i=0}^d$ and  $\{\th^*_i\}_{i=0}^d$
is $\beta$-recurrent.
We say that $\Phi$ is {\em recurrent} (or {\em REC}) whenever
there exists  $\beta \in \F$ such that
 $\Phi$ is $\beta$-recurrent.
\end{defi}

In \cite{NT:CHS} we obtained the implication
\begin{equation}
\text{CHS  $\Rightarrow$ TD}.  \label{eq:i}
\end{equation}
We will prove the following:
\begin{align}
& \text{TD $\Rightarrow$ REC},  \label{eq:ii}
\\
&\text{THS $\Rightarrow$ TD},  \label{eq:iii}
\\
\text{QCHS is a } & \text{disjoint union of CHS and THS},    \label{eq:iv}
\\
&\text{QCHS  $\Leftrightarrow$ TD}.   \label{eq:v}
\end{align}
The proofs of \eqref{eq:ii}--\eqref{eq:v} are given in Section \ref{sec:proof}.

\bigskip
By \eqref{eq:diag1b} and \eqref{eq:ii}--\eqref{eq:v},
 we get the following implications:
\[
\text{QCHS} \quad   \Leftrightarrow  \quad 
\text{TD} 
\]
\[
\text{LS} \quad \Rightarrow \quad 
\text{THS} \quad  \Rightarrow \quad 
\text{TD} \quad \Rightarrow \quad  
\text{REC} \quad  \Rightarrow  \quad   
\text{HS}   
\]
By \eqref{eq:iv} and \eqref{eq:v},
CHS is the complement of THS in TD.

\section{Adjusting the split sequence}
\label{sec:adjust}
\ifDRAFT {\rm sec:adjust}. \fi

In this section, we adjust the split sequence using the approach
of \cite{JHL}.
Let
\[
\Phi = ( A; \{E_i\}_{i=0}^d;A^*; \{E^*_i\}_{i=0}^d)
\]
denote a Hessenberg system on $V$, with parameter array
$(\{\th_i\}_{i=0}^d; \{\th^*_i\}_{i=0}^d; \{\phi_i\}_{i=1}^d)$.

\begin{defi}     {\rm (See \cite[line (6)]{JHL}.)}
\label{def:vth}    \samepage
\ifDRAFT {\rm def:vth}. \fi
Define scalars
\[
\vartheta_i = \phi_i - (\th^*_i - \th^*_0) ( \th_{d-i+1} - \th_0)
\qquad\qquad (1 \leq i \leq d)
\]
and
\[
\vth_0 = 0,
\qquad\qquad
\vth_{d+1} = 0.
\]
\end{defi}

\begin{lemma}   \label{lem:vths}    \samepage
\ifDRAFT {\rm lem:vths}. \fi
We have
$\vartheta^*_i = \vartheta_{d-i+1}
\quad
(0 \leq  i \leq d+1)$.
\end{lemma}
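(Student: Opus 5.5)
The plan is to compute $\vth^*_i$ directly from Definition \ref{def:vth}, applied to the dual system $\Phi^*$, and to read off the parameter array of $\Phi^*$ from Lemma \ref{lem:Phis}. The boundary cases $i=0$ and $i=d+1$ are immediate. By Definition \ref{def:vth} applied to $\Phi^*$ we have $\vth^*_0 = 0 = \vth_{d+1}$ and $\vth^*_{d+1} = 0 = \vth_0$. So the content of the lemma lies in the range $1 \leq i \leq d$.

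Fix $1 \leq i \leq d$. By Lemma \ref{lem:Phis}, the parameter array of $\Phi^*$ is $(\{\th^*_i\}_{i=0}^d; \{\th_i\}_{i=0}^d; \{\phi_{d-i+1}\}_{i=1}^d)$. Thus in $\Phi^*$ the eigenvalue sequence is $\{\th^*_i\}$, the dual eigenvalue sequence is $\{\th_i\}$, and the $i$-th split parameter is $\phi_{d-i+1}$. Substituting these into Definition \ref{def:vth} (with starred and unstarred objects swapped) gives
\[
\vth^*_i = \phi_{d-i+1} - (\th_i - \th_0)(\th^*_{d-i+1} - \th^*_0).
\]
On the other hand, Definition \ref{def:vth} with index $j = d-i+1$ (which also lies in $\{1,\ldots,d\}$) gives
\[
\vth_{d-i+1} = \phi_{d-i+1} - (\th^*_{d-i+1} - \th^*_0)(\th_{i} - \th_0),
\]
since $d - j + 1 = i$. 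The two right-hand sides agree because the product of the two factors commutes, and this gives the claim.

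There is no real obstacle here. The only point needing care is the bookkeeping of the index substitution $j = d-i+1$, together with the fact that the roles of $\th$ and $\th^*$ are swapped in the dual system. Both of these are handled by Lemma \ref{lem:Phis}.
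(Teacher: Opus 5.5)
Your proposal is correct and follows the same route as the paper, which simply cites Lemma \ref{lem:Phis}; you have written out the substitution into Definition \ref{def:vth} for $1 \leq i \leq d$ and checked the boundary cases $i=0$ and $i=d+1$ explicitly.
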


\begin{proof}
By Lemma \ref{lem:Phis}.
\end{proof}

\begin{lemma}   \label{lem:vths2}    \samepage
\ifDRAFT {\rm lem:vths2}. \fi
The following are equivalent:
\begin{itemize}
\item[\rm (i)]
$\{\vartheta_i\}_{i=0}^{d+1}$ is $\beta$-recurrent;
\item[\rm (ii)]
$\{\vartheta^*_i\}_{i=0}^{d+1}$ is $\beta$-recurrent.
\end{itemize}
\end{lemma}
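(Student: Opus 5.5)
The plan is to combine Lemma \ref{lem:vths} with Lemma \ref{lem:recinv}.

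By Lemma \ref{lem:vths}, we have $\vth^*_i = \vth_{d-i+1}$ for $0 \leq i \leq d+1$. This needs a short consistency check at the endpoints. For $i=0$ it reads $\vth^*_0 = \vth_{d+1}$, and both sides are $0$ by the convention in Definition \ref{def:vth} applied to $\Phi$ and to $\Phi^*$. For $i=d+1$ it reads $\vth^*_{d+1}=\vth_0$, and again both sides are $0$.

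Thus the sequence $\{\vth^*_i\}_{i=0}^{d+1}$ is exactly the inversion of the sequence $\{\vth_i\}_{i=0}^{d+1}$. Note that this sequence has length $d+2$ and is indexed by $0,\ldots,d+1$.

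Next apply Lemma \ref{lem:recinv} with $d$ replaced by $d+1$. The defining condition of $\beta$-recurrence in Definition \ref{def:betarec}(ii) is
\[
\vth_{i-2} - (\beta+1)\vth_{i-1} + (\beta+1)\vth_i - \vth_{i+1} = 0 \qquad (2 \leq i \leq d).
\]
Substituting $i \mapsto d+3-i$ and reindexing shows that this system of equations is the same as the system for the reversed sequence, up to an overall sign in each equation. Therefore $\{\vth_i\}_{i=0}^{d+1}$ is $\beta$-recurrent if and only if $\{\vth_{d+1-i}\}_{i=0}^{d+1} = \{\vth^*_i\}_{i=0}^{d+1}$ is $\beta$-recurrent, which is the claim.

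The only potential obstacle is bookkeeping. One must make sure that Lemma \ref{lem:recinv}, stated for sequences of length $d+1$, is applied to a sequence of length $d+2$. This is legitimate because its proof is just the symmetry of Definition \ref{def:betarec}(ii) under reversal of indices, and that symmetry holds for sequences of any length.
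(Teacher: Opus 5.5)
Your argument is correct and is the paper's own: Lemma \ref{lem:vths} shows that $\{\vth^*_i\}_{i=0}^{d+1}$ is the inversion of $\{\vth_i\}_{i=0}^{d+1}$, and $\beta$-recurrence is invariant under inversion (Lemma \ref{lem:recinv} with $d+1$ in place of $d$). One small bookkeeping slip: the substitution that matches the two systems is $i \mapsto d+2-i$, which maps $\{2,\ldots,d\}$ to itself and negates each equation, not $i \mapsto d+3-i$.
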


\begin{proof}
Use Lemma  \ref{lem:vths}.
\end{proof}

\begin{lemma}  \label{lem:notevthd2}    \samepage
\ifDRAFT {\rm lem:notevthd2}. \fi
Assume that $d=2$.
Then  for $\beta \in \F$ the following are equivalent:
\begin{itemize}
\item[\rm (i)]
the sequence $\{\vth_i\}_{i=0}^{d+1}$ is $\beta$-recurrent;
\item[\rm (ii)]
$\beta = -1$ or $\vth_1 = \vth_2$.
\end{itemize}
\end{lemma}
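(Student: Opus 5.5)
The plan is to unwind Definition \ref{def:betarec}(ii) for the sequence $\{\vth_i\}_{i=0}^{d+1}$ when $d=2$. This sequence has four terms, $\vth_0,\vth_1,\vth_2,\vth_3$, with $d+1=3$. The index range in Definition \ref{def:betarec}(ii) is therefore $2 \leq i \leq (d+1)-1 = 2$, so $\beta$-recurrence amounts to a single equation:
\[
\vth_0 - (\beta+1)\vth_1 + (\beta+1)\vth_2 - \vth_3 = 0 .
\]

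Next I substitute the boundary values from Definition \ref{def:vth}, namely $\vth_0 = 0$ and $\vth_{d+1} = \vth_3 = 0$. The equation then reduces to
\[
(\beta+1)(\vth_2 - \vth_1) = 0 .
\]

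Since $\F$ is a field, this product vanishes if and only if $\beta+1=0$ or $\vth_2-\vth_1=0$, that is, if and only if $\beta=-1$ or $\vth_1=\vth_2$. This establishes (i)$\Leftrightarrow$(ii).

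There is no real obstacle here. The only point needing care is the index bookkeeping: Definition \ref{def:betarec} is stated for sequences indexed $0$ to $d$, but here it is applied to a sequence of length $d+2$. So $d$ in that definition must be replaced by $d+1$, which yields exactly one recurrence condition rather than the vacuous situation of Note \ref{note:rec}.
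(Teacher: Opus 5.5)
Your proposal is correct and follows essentially the same route as the paper: with $d=2$, the sequence $\{\vth_i\}_{i=0}^{3}$ satisfies only the single $\beta$-recurrence equation $\vth_0-(\beta+1)\vth_1+(\beta+1)\vth_2-\vth_3=0$, and substituting $\vth_0=\vth_3=0$ leaves $(\beta+1)(\vth_2-\vth_1)=0$. Your remark on the index shift (applying Definition \ref{def:betarec} with $d+1$ in place of $d$) makes explicit the bookkeeping the paper leaves implicit.
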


\begin{proof}
The sequence $\{\vth_i\}_{i=0}^{d+1}$ is $\beta$-recurrent if and only if
\[
\vth_0 - (\beta+1) \vth_1 + (\beta+1) \vth_2 - \vth_3 = 0.
\]
By Definition \ref{def:vth} we have $\vth_0 = 0$ and $\vth_3 = 0$.
The result follows.
\end{proof}

The following result is a variation on  \cite[Lemma 12.5]{T:Leonard}.

\begin{lemma} \label{lem:TL}     \samepage
\ifDRAFT {\rm lem:TL}. \fi
Pick scalars  $\beta, \gamma, \varrho \in \F$.
Then $A,A^*$ satisfy \eqref{eq:td1}
if and only if the following  {\rm (i)--(iii)} hold:
\begin{itemize}
\item[\rm (i)]
$\{\th_i\}_{i=0}^d$ is $(\beta, \gamma, \varrho)$-recurrent;
\item[\rm (ii)]
$\{\th^*_i\}_{i=0}^d$ is $\beta$-recurrent;
\item[\rm (iii)]
$\{\vartheta_i\}_{i=0}^{d+1}$ is $\beta$-recurrent.
\end{itemize}
\end{lemma}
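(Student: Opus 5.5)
Work in a basis where things are explicit. Take the inverted $\Phi^*$-split basis from Lemma \ref{lem:matrixAAs}(iv), in which $A$ is lower bidiagonal with diagonal $\th_d,\dots,\th_0$ and subdiagonal entries $1$, and $A^*$ is upper bidiagonal with diagonal $\th^*_0,\dots,\th^*_d$ and superdiagonal $\phi_1,\dots,\phi_d$. It is more convenient to test \eqref{eq:td1} against the idempotents. Write $A^* = \sum_{i,j} E_i A^* E_j$ and note that $A,A^*$ satisfy \eqref{eq:td1} iff for all $i,j$
\[
 E_i A^* E_j\,(\th_i-\th_j)\bigl(\th_i^2-\beta\th_i\th_j+\th_j^2-\gamma(\th_i+\th_j)-\varrho\bigr)=0 .
\]
This holds because $[A,\cdot]$ and the polynomial in $A$ act on $E_iXE_j$ by scalars. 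So \eqref{eq:td1} is equivalent to this: $E_iA^*E_j=0$ whenever $i\neq j$ and $p(\th_i,\th_j):=\th_i^2-\beta\th_i\th_j+\th_j^2-\gamma(\th_i+\th_j)-\varrho\neq0$.

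The plan is to compute $E_iA^*E_j$ in terms of the parameter array. We know $E_iA^*E_j=0$ for $i-j>1$ and $E_iA^*E_{i-1}\ne0$ by the HS axioms. For $j\ge i$ we need a formula. With respect to the basis of Lemma \ref{lem:matrixAAs}(iv), the $E_i$ can be written via the eigenvectors of the lower bidiagonal $A$. An explicit eigenbasis $\{w_j\}$ of $A$ has components given by products $\prod(\th_j-\th_k)^{-1}$. Expressing $A^*w_j$ in this eigenbasis gives coefficients that are divided differences. My expectation is that the coefficient of $w_i$ in $A^*w_j$, for $j>i$, is a nonzero multiple of a combination involving $\th^*_k$ and $\vth_k$ (hence the adjustment in Definition \ref{def:vth}). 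Concretely, I would compute the scalar $\operatorname{tr}(E_iA^*E_jX)$ for a suitable $X$, or use the split decomposition directly, to obtain $E_iA^*E_j\ne0$ iff a certain divided difference $D_{ij}$ of the sequences $\{\th^*_k\}$ and $\{\vth_k\}$ over the nodes $\th_{d-i+1},\dots,\th_{d-j}$ is nonzero.

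Given this, the argument runs in two directions. ($\Leftarrow$) Assume (i)--(iii). Condition (i) gives $p(\th_{i-1},\th_i)=0$, and by symmetry of $p$ also $p(\th_i,\th_{i-1})=0$, so the adjacent terms cause no constraint. For $|i-j|\ge2$, the zeros of $p(\cdot,\th_j)$ are exactly $\th_{j\pm1}$ when these are defined and distinct, so $p(\th_i,\th_j)\ne0$ in general. We therefore need $E_iA^*E_j=0$ for $j-i\ge2$. This reduces to showing that the $\beta$-recurrence of $\{\th^*_k\}$ and $\{\vth_k\}$, together with the $(\beta,\gamma,\varrho)$-structure of $\{\th_k\}$, kills every $D_{ij}$ with $j-i\ge2$. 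It also requires $E_iA^*E_{i+1}$ to be either zero or allowed, and this is automatic since $p(\th_i,\th_{i+1})=0$. The natural tool is that a $\beta$-recurrent sequence lies in the span of $1,q^k,q^{-k}$ (or the degenerate polynomial and $\beta=\pm2$ analogues), and that the $\th_k$ lie in the same span. Higher divided differences of such a sequence, taken with respect to the $\th$-nodes, would then vanish beyond order two. ($\Rightarrow$) Assume \eqref{eq:td1}. From $E_iA^*E_{i-1}\ne0$ we get $p(\th_i,\th_{i-1})=0$ for $1\le i\le d$, which is (i). From $E_iA^*E_j=0$ for $j\ge i+2$, read off equivalently from the bidiagonal matrix entries of $A^2A^*-\beta AA^*A+\cdots$ on the second and third superdiagonals, we get linear equations in $\th^*_k$ and $\phi_k$. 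With (i) already in hand, these rearrange exactly into $\th^*_{k-2}-(\beta+1)\th^*_{k-1}+(\beta+1)\th^*_k-\th^*_{k+1}=0$ and the analogous identity for $\vth_k$, with the convention $\vth_0=\vth_{d+1}=0$ covering the boundary.

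In fact, the cleanest route is probably to avoid idempotents entirely. Expand $[A,\,A^2A^*-\beta AA^*A+A^*A^2-\gamma(AA^*+A^*A)-\varrho A^*]$ directly in the basis of Lemma \ref{lem:matrixAAs}(iv), where $A$ is lower bidiagonal and $A^*$ is upper bidiagonal, so the result is a banded matrix. Its entries on each band are explicit polynomials in $\th_k,\th^*_k,\phi_k$. Setting each band to zero and substituting $\phi_k=\vth_k+(\th^*_k-\th^*_0)(\th_{d-k+1}-\th_0)$ should produce (i)--(iii). This mirrors \cite[Lemma 12.5]{T:Leonard}. The main obstacle is this bookkeeping: showing that the band equations, after using (i), reduce precisely to the two $\beta$-recurrences, with the boundary rows matching $\vth_0=\vth_{d+1}=0$. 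I would first check the cases $d=2$ and $d=3$ by hand to pin down the exact combination.
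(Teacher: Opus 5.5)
Your final route, which expands the commutator in \eqref{eq:td1} in the basis of Lemma~\ref{lem:matrixAAs}(iv), reads off the bands, and substitutes $\phi_k=\vartheta_k+(\theta^*_k-\theta^*_0)(\theta_{d-k+1}-\theta_0)$, is exactly the paper's proof. The paper passes to that basis and then quotes the band computation of \cite[Lemma 12.5]{T:Leonard}. One orientation slip should be fixed. Since $A$ is lower and $A^*$ upper bidiagonal there, the commutator is supported on the first superdiagonal, the diagonal and the first three subdiagonals. The superdiagonal entry in row $i$ is $(\theta_{d-i}-\theta_{d-i-1})\phi_{i+1}\,p(\theta_{d-i},\theta_{d-i-1})$, which gives (i). Given (i), conditions (ii) and (iii) come from the bands below the diagonal, not from ``second and third superdiagonals.''

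You should drop the idempotent route, because its key step is false. You claim that (i)--(iii) must (and do) give $E_iA^*E_j=0$ for all $j-i\ge 2$, and conversely that \eqref{eq:td1} yields these vanishings. Both claims fail for $(i,j)=(0,d)$. Your root analysis of $p(\cdot,\theta_j)$ is valid only for $1\le j\le d-1$. For $j=d$ the second root $\beta\theta_d+\gamma-\theta_{d-1}$ can equal $\theta_0$.

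Here is a concrete case. Take $d=2$, $\beta=-1$, and $\gamma,\varrho$ as in the proof of Proposition~\ref{prop:TDd2}. Then (i)--(iii) hold for every Hessenberg system, and $p(\theta_i,\theta_j)=0$ for all $i\neq j$. Yet $E_0A^*E_2\neq 0$ whenever $\vartheta_1\neq\vartheta_2$, by Lemma~\ref{lem:entry0dno2d2}. More generally, every CHS is TD with $E_0A^*E_d\neq0$; this is precisely why TD matches QCHS rather than THS. So your ``divided difference'' $D_{0d}$ is not killed by the recurrences.

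To rescue that route you would have to show directly that, under (i)--(iii), $\vartheta_1\neq\vartheta_d$ forces $p(\theta_0,\theta_d)=0$. This is true, but proving it needs a case analysis on $\beta$. Such a $\vartheta$ exists only when the recurrence wraps around with period $d+1$ (e.g.\ $q^{d+1}=1$ for $\beta=q+q^{-1}$), and that wrap-around gives $\beta\theta_d+\gamma-\theta_{d-1}=\theta_0$. In the forward direction you could likewise use only $E_iA^*E_j=0$ for $1<j-i<d$. The matrix computation avoids all of this, since the pair $(0,d)$ never needs separate treatment there.
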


\begin{proof}
We may assume that $A,A^*$ are the matrices from  Lemma \ref{lem:matrixAAs}(iv).
Now use \cite[Lemma 12.5]{T:Leonard}.
\end{proof}

\begin{lemma} \label{lem:TL2}     \samepage
\ifDRAFT {\rm lem:TL2}. \fi
Pick scalars  $\beta, \gamma^*, \varrho^* \in \F$.
Then $A,A^*$ satisfy \eqref{eq:td2}
if and only if the following  {\rm (i)--(iii)} hold:
\begin{itemize}
\item[\rm (i)]
$\{\th^*_i\}_{i=0}^d$ is $(\beta, \gamma^*, \varrho^*)$-recurrent;
\item[\rm (ii)]
$\{\th_i\}_{i=0}^d$ is $\beta$-recurrent;
\item[\rm (iii)]
$\{\vartheta_i\}_{i=0}^{d+1}$ is $\beta$-recurrent.
\end{itemize}
\end{lemma}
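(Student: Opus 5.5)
The plan is to deduce Lemma \ref{lem:TL2} from Lemma \ref{lem:TL} by passing to the dual system $\Phi^*$ from Definition \ref{def:dual}. Observe that \eqref{eq:td2} for the pair $A,A^*$ is literally \eqref{eq:td1} for the pair $A^*,A$, with $\gamma,\varrho$ replaced by $\gamma^*,\varrho^*$. Indeed, swapping the roles of $A$ and $A^*$ in
\[
[A, A^2 A^* - \beta A A^* A + A^* A^2 - \gamma (A A^* + A^* A) - \varrho A^*]
\]
gives exactly the right-hand side of \eqref{eq:td2}. Therefore $A,A^*$ satisfy \eqref{eq:td2} with scalars $\beta,\gamma^*,\varrho^*$ if and only if the Hessenberg system $\Phi^*$ (whose first operator is $A^*$) satisfies \eqref{eq:td1} with scalars $\beta,\gamma^*,\varrho^*$.

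Next I would apply Lemma \ref{lem:TL} to $\Phi^*$. It says this holds if and only if three conditions are met. First, the eigenvalue sequence of $\Phi^*$ must be $(\beta,\gamma^*,\varrho^*)$-recurrent. Second, the dual eigenvalue sequence of $\Phi^*$ must be $\beta$-recurrent. Third, the sequence $\{\vth^*_i\}_{i=0}^{d+1}$ attached to $\Phi^*$ must be $\beta$-recurrent.

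By Lemma \ref{lem:Phis}, the eigenvalue sequence of $\Phi^*$ is $\{\th^*_i\}_{i=0}^d$ and its dual eigenvalue sequence is $\{\th_i\}_{i=0}^d$. So the first two conditions become exactly (i) and (ii) of Lemma \ref{lem:TL2}.

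For the third condition, Lemma \ref{lem:vths2} shows that $\{\vth^*_i\}_{i=0}^{d+1}$ is $\beta$-recurrent if and only if $\{\vth_i\}_{i=0}^{d+1}$ is $\beta$-recurrent. That is condition (iii), which completes the proof.

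The only point needing care is confirming that the $\vth$-sequence Lemma \ref{lem:TL} attaches to $\Phi^*$ is the starred sequence $\vth^*_i$ of Definition \ref{def:dual}, built from the parameter array in Lemma \ref{lem:Phis}. That identification is what Lemmas \ref{lem:vths} and \ref{lem:vths2} provide: $\vth^*_i=\vth_{d-i+1}$, and $\beta$-recurrence is invariant under reversal by Lemma \ref{lem:recinv}. No other obstacle arises.
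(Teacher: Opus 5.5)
Your proposal is correct and takes the same route as the paper, which proves the lemma by applying Lemma~\ref{lem:TL} to $\Phi^*$ and then using Lemma~\ref{lem:vths2} to replace $\{\vth^*_i\}_{i=0}^{d+1}$ by $\{\vth_i\}_{i=0}^{d+1}$. Your check that \eqref{eq:td2} is exactly \eqref{eq:td1} with $A,A^*$ swapped and $\gamma,\varrho$ replaced by $\gamma^*,\varrho^*$ is the one step the paper leaves implicit.
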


\begin{proof}
Apply Lemma \ref{lem:TL} to $\Phi^*$ using Lemma \ref{lem:vths2}.
\end{proof}

\begin{prop}  {\em (See \cite[Lemma 2.16]{JHL}.) }
\label{prop:vth}  \samepage
\ifDRAFT {\rm prop:vth}. \fi
Assume that $d \geq 3$.
Then the following are equivalent:
\begin{itemize}
\item[\rm (i)]
$\Phi$ is TD;
\item[\rm (ii)]
there exists $\beta \in \F$ such that each of 
$\{\th_i\}_{i=0}^d$,
$\{\th^*_i\}_{i=0}^d$,
$\{\vartheta_i\}_{i=0}^{d+1}$ is $\beta$-recurrent.
\end{itemize}
\end{prop}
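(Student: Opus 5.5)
The plan is to derive both directions directly from Lemmas \ref{lem:TL} and \ref{lem:TL2}, which already translate each TD relation into recurrence conditions on $\{\th_i\}_{i=0}^d$, $\{\th^*_i\}_{i=0}^d$, $\{\vth_i\}_{i=0}^{d+1}$. The only extra input needed is the passage between the refined recurrences ($(\beta,\gamma,\varrho)$-recurrent) and plain $\beta$-recurrence, which is supplied by Lemmas \ref{lem:recseq1} and \ref{lem:recseq2}.

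For (i)$\Rightarrow$(ii), assume $\beta,\gamma,\gamma^*,\varrho,\varrho^*$ satisfy \eqref{eq:td1}, \eqref{eq:td2}. Lemma \ref{lem:TL} applied to \eqref{eq:td1} gives that $\{\th^*_i\}_{i=0}^d$ and $\{\vth_i\}_{i=0}^{d+1}$ are $\beta$-recurrent. Lemma \ref{lem:TL2} applied to \eqref{eq:td2} gives that $\{\th_i\}_{i=0}^d$ is $\beta$-recurrent. Since the same $\beta$ appears in both relations, (ii) follows immediately.

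For (ii)$\Rightarrow$(i), fix $\beta$ as in (ii). Since $\{\th_i\}_{i=0}^d$ is $\beta$-recurrent, Lemma \ref{lem:recseq1} gives $\gamma$ such that it is $(\beta,\gamma)$-recurrent. Lemma \ref{lem:recseq2}(i) then gives $\varrho$ such that it is $(\beta,\gamma,\varrho)$-recurrent. The same argument applied to $\{\th^*_i\}_{i=0}^d$ produces $\gamma^*,\varrho^*$. Now conditions (i)--(iii) of Lemma \ref{lem:TL} hold for $\beta,\gamma,\varrho$, so \eqref{eq:td1} holds. Likewise conditions (i)--(iii) of Lemma \ref{lem:TL2} hold for $\beta,\gamma^*,\varrho^*$, so \eqref{eq:td2} holds. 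Hence $\Phi$ is TD.

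The main point needing care is the direction (ii)$\Rightarrow$(i). There we must confirm that Lemma \ref{lem:recseq2}(i) requires no distinctness hypothesis, which it does not. Only part (ii) of that lemma needs $\th_{i-1}\neq\th_{i+1}$, and we never use part (ii). The hypothesis $d\geq 3$ plays no essential role in this argument. It is presumably imposed so that $\beta$ is meaningfully constrained (compare Note \ref{note:rec} and the separate treatment of $d=2$ in Lemma \ref{lem:notevthd2}), so the equivalence as stated is safe.
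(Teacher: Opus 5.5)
Your proof is correct. Both directions follow from Lemmas \ref{lem:TL} and \ref{lem:TL2} together with Lemmas \ref{lem:recseq1} and \ref{lem:recseq2}(i), and you are right that the hypothesis $d\geq 3$ plays no role. The paper only cites \cite[Lemma 2.16]{JHL} here, but your argument is the same route the paper itself takes in proving the $d=2$ analogue, Proposition \ref{prop:TDd2}.
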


\begin{prop}   \label{prop:TDd2}   \samepage
\ifDRAFT {\rm prop:TDd2}. \fi
Assume that $d=2$. 
Then $\Phi$ is TD.
\end{prop}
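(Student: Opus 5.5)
The plan is to use Lemmas \ref{lem:TL} and \ref{lem:TL2}, reducing TD to a choice of scalars $\beta,\gamma,\gamma^*,\varrho,\varrho^*$ for which conditions (i)--(iii) of both lemmas hold. Since $d=2$, Note \ref{note:rec} shows that $\{\th_i\}_{i=0}^2$ and $\{\th^*_i\}_{i=0}^2$ are $\beta$-recurrent for every $\beta\in\F$. This disposes of condition (ii) in both lemmas, whatever $\beta$ we choose. For condition (iii), Lemma \ref{lem:notevthd2} says that $\{\vth_i\}_{i=0}^3$ is $\beta$-recurrent if $\beta=-1$. So I will fix $\beta=-1$, which gives condition (iii) in both lemmas.

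It remains to satisfy condition (i) of Lemma \ref{lem:TL}: we need $\gamma,\varrho$ with $\{\th_i\}_{i=0}^2$ being $(-1,\gamma,\varrho)$-recurrent. Likewise, for condition (i) of Lemma \ref{lem:TL2} we need $\gamma^*,\varrho^*$ with $\{\th^*_i\}_{i=0}^2$ being $(-1,\gamma^*,\varrho^*)$-recurrent.

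For $d=2$, the $(\beta,\gamma)$-recurrence in Definition \ref{def:betarec}(iii) is a single condition at $i=1$. So define $\gamma=\th_0+\th_1+\th_2$, which makes $\{\th_i\}$ a $(-1,\gamma)$-recurrent sequence. Lemma \ref{lem:recseq2}(i) then gives $\varrho$ with $\{\th_i\}$ being $(-1,\gamma,\varrho)$-recurrent.

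If one wants to avoid citing that lemma, a direct check works. With $\beta=-1$ the expression $\th_{i-1}^2+\th_{i-1}\th_i+\th_i^2-\gamma(\th_{i-1}+\th_i)$ should take the same value for $i=1,2$. Their difference factors as
\[
(\th_0-\th_2)(\th_0+\th_1+\th_2-\gamma),
\]
which vanishes by the choice of $\gamma$. Define $\varrho$ to be this common value.

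Do the same for the dual sequence, with $\gamma^*=\th^*_0+\th^*_1+\th^*_2$ and the corresponding $\varrho^*$. Then Lemmas \ref{lem:TL} and \ref{lem:TL2} give \eqref{eq:td1} and \eqref{eq:td2} with scalars $-1,\gamma,\gamma^*,\varrho,\varrho^*$, so $\Phi$ is TD by Definition \ref{def:TDH}.

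There is no real obstacle here. The only points to check are that Lemmas \ref{lem:TL} and \ref{lem:TL2} hold for $d=2$, and that both lemmas use the same $\beta$. Lemma \ref{lem:TL} is stated without a restriction on $d$, and it relies on \cite[Lemma 12.5]{T:Leonard} applied to the matrices of Lemma \ref{lem:matrixAAs}(iv), so it applies. The choice $\beta=-1$ is forced only when $\vth_1\neq\vth_2$, but it works uniformly in all cases.
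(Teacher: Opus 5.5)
Your proposal is correct and matches the paper's proof essentially step for step. The paper also takes $\beta=-1$ and $\gamma=\th_0-\beta\th_1+\th_2$ (i.e.\ $\th_0+\th_1+\th_2$), defines $\varrho$ explicitly as the $i=1$ value and invokes Lemma \ref{lem:recseq2}(i), does the same for the dual sequence, gets $\beta$-recurrence of $\{\vth_i\}_{i=0}^{3}$ from Lemma \ref{lem:notevthd2}, and concludes with Lemmas \ref{lem:TL} and \ref{lem:TL2}; your direct factorization check for $\varrho$ is a correct optional supplement.
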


\begin{proof}
Define  $\beta = -1$ and
\begin{align*}
\gamma &= \th_0 - \beta \th_1 + \th_2,
&
\gamma^* &= \th^*_0 - \beta \th^*_1 + \th^*_2,
\\
\varrho &= \th_0^2 - \beta \th_0 \th_1 + \th_1^2 - \gamma (\th_0 + \th_1),
&
\varrho^* &= \th_0^{*2} - \beta \th^*_0 \th^*_1 + \th_1^{*2} - \gamma^* (\th^*_0 + \th^*_1).
\end{align*}
The sequence $\{\th_i\}_{i=0}^d$ is $(\beta,\gamma)$-recurrent by construction,
and $(\beta, \gamma, \varrho)$-recurrent by Lemma \ref{lem:recseq2}(i).
The sequence $\{\th^*_i\}_{i=0}^d$ is $(\beta,\gamma^*)$-recurrent by construction,
and $(\beta, \gamma^*, \varrho^*)$-recurrent by Lemma \ref{lem:recseq2}(i).
The sequence $\{\vth_i\}_{i=0}^{d+1}$ is $\beta$-recurrent by Lemma \ref{lem:notevthd2}.
The maps $A,A^*$ satisfy \eqref{eq:td1} by Lemma \ref{lem:TL} and 
\eqref{eq:td2} by Lemma \ref{lem:TL2}.
Therefore  $\Phi$ is TD by Definition \ref{def:TDH}.
\end{proof}

\section{Vanishing triple products}
\label{sec:triple}
\ifDRAFT {\rm sec:triple}. \fi

Throughout this section,
let $\Phi = ( A; \{E_i\}_{i=0}^d;$ $ A^*; \{E^*_i\}_{i=0}^d)$
denote a Hessenberg system on $V$ with parameter array
$(\{\th_i\}_{i=0}^d; \{\th^*_i\}_{i=0}^d$; $\{\phi_i\}_{i=1}^d)$.
We consider the triple  products
\[
E_i A^* E_j,
\qquad\qquad
E^*_i A E^*_j
\qquad\qquad
(0 \leq i,j \leq d, \; 1 < j-i).
\]
We obtain some results concerning which of these triple products is zero.
We use these results to describe TD and THS.

\begin{lemma}  {\rm (See \cite[Lemma 4.4]{JHL}.)} 
\label{lem:JHL}     \samepage
\ifDRAFT {\rm lem:JHL}. \fi
Assume that $d \geq 3$ and there exists $\beta \in \F$ such that each of
$\{\th_i\}_{i=0}^d$, $\{\th^*_i\}_{i=0}^d$, $\{\vartheta_i\}_{i=0}^{d+1}$
is $\beta$-recurrent.
Then the following {\rm (i)--(iv)} hold:
\begin{itemize}
\item[\rm (i)]
$E_i A^* E_j = 0$ \quad if \quad $1 < j-i < d$ \quad $(0 \leq i,j \leq d)$;
\item[\rm (ii)]
$E^*_i A E^*_j = 0$ \quad if \quad $1 < j-i < d$ \quad $(0 \leq i,j \leq d)$;
\item[\rm (iii)]
$E_0 A^* E_d = 0$ \quad if and only if \quad $\vartheta_1 = \vartheta_d$;
\item[\rm (iv)]
$E^*_0 A E^*_d = 0$ \quad if and only if \quad $\vartheta_1 = \vartheta_d$.
\end{itemize}
\end{lemma}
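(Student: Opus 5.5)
We will work entirely with the matrices of Lemma \ref{lem:matrixAAs}, and we will first use duality to cut the work in half. By Lemma \ref{lem:Phis} and Lemma \ref{lem:vths}, the hypotheses are invariant under $\Phi \mapsto \Phi^*$. Indeed, $\{\vth^*_i\}$ is the reversal of $\{\vth_i\}$, so it is $\beta$-recurrent by Lemma \ref{lem:vths2}, and the condition $\vth^*_1=\vth^*_d$ is the same as $\vth_d=\vth_1$. Hence (ii) and (iv) follow from (i) and (iii) applied to $\Phi^*$. So it suffices to prove (i) and (iii).

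For (i) and (iii) we need $E_iA^*E_j$. Work in the inverted $\Phi^*$-split basis of Lemma \ref{lem:matrixAAs}(iv). There $A$ is lower bidiagonal with diagonal $\th_d,\dots,\th_0$ and subdiagonal $1$'s, and $A^*$ is upper bidiagonal with diagonal $\th^*_0,\dots,\th^*_d$ and superdiagonal $\phi_1,\dots,\phi_d$. The key observation is that $E_iA^*E_j=0$ iff $E_i A^* E_j V=0$, and $E_jV$, $E_iV$ are one-dimensional. So it suffices to compute the scalar $w_i^{\top}A^*u_j$, where $u_j$ is a right eigenvector of $A$ for $\th_j$ and $w_i$ is a left eigenvector for $\th_i$.

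Because $A$ is lower bidiagonal with $1$'s below the diagonal, these eigenvectors are explicit. The right eigenvector $u_j$ is supported on coordinates $k\ge d-j$, with entries given by products of $(\th_j-\th_{\ell})^{-1}$. The left eigenvector $w_i$ is supported on coordinates $k\le d-i$. Expanding, the scalar $w_i^{\top}A^*u_j$ becomes a sum over the window $d-j\le k\le d-i$ involving the $\th^*_k$ and $\phi_k$. Substituting $\phi_k=\vth_k+(\th^*_k-\th^*_0)(\th_{d-k+1}-\th_0)$ from Definition \ref{def:vth} splits it into a $\vth$-part and a part built only from $\th,\th^*$.

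The main obstacle is showing that this sum vanishes for $1<j-i<d$. The approach is to use the $\beta$-recurrence: by Lemma \ref{lem:recseq1}, each sequence satisfies a $(\beta,\gamma)$-type three-term relation. So $\th^*_k$ and $\vth_k$ each lie in a two-dimensional space of solutions (plus a constant). The key fact to prove is that for each fixed pair $(i,j)$ with $j-i\ge 2$, the linear functional (in the sequence values over the window) given by the divided-difference type sum kills every $\beta$-recurrent sequence. This should be done by checking it on a basis of solutions, treating the cases $\beta\ne\pm2$ ($q$-exponentials), $\beta=2$ (polynomials of degree $\le2$) and $\beta=-2$ separately. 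Alternatively, one can avoid this case split: for $d\ge3$ Lemma \ref{lem:TL} and Lemma \ref{lem:TL2} give the TD relations \eqref{eq:td1} and \eqref{eq:td2}. Multiplying \eqref{eq:td2}'s dual version, namely \eqref{eq:td1}, on the left by $E_i$ and on the right by $E_j$ yields
\[
E_iA^*E_j\,(\th_i-\th_j)\,P(\th_i,\th_j)=0,
\]
where $P(x,y)=x^2-\beta xy+y^2-\gamma(x+y)-\varrho$. Since the $\th$'s are $(\beta,\gamma,\varrho)$-recurrent, $P(\th_i,\th_j)=0$ exactly when $|i-j|=1$, except for coincidences. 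We will try this route first. Its difficulty is that for $\beta$ near $\pm2$ or special $q$ there may be extra zeros of $P$, e.g.\ $P(\th_0,\th_d)$, which is precisely the exceptional corner $j-i=d$. So the plan is: use the TD relation to kill all $E_iA^*E_j$ with $P(\th_i,\th_j)\ne0$. Then verify that, for $1<j-i<d$, $P(\th_i,\th_j)=0$ forces a contradiction with distinctness of the $\th$'s, via the solution formulas for a $\beta$-recurrent sequence. For $j-i=d$, compute $w_0^\top A^*u_d$ directly: it is a single explicit sum whose value, after substitution and use of Lemma \ref{lem:Tvth}, is a nonzero multiple of $\vth_1-\vth_d$. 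This gives (iii).
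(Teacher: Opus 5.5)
The duality reduction is correct, and so is your TD-relation route to (i). The coincidence check you postpone needs no case split on $\beta$. By the $(\beta,\gamma)$-recurrence, for $1\le i\le d-1$ the monic quadratic $P(\th_i,\lambda)$ has roots exactly $\th_{i-1},\th_{i+1}$. If $1<j-i<d$, then one of $i,j$ lies in $\{1,\dots,d-1\}$, so the symmetry of $P$ and distinctness give $P(\th_i,\th_j)\neq0$.

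\textbf{The gap is in (iii), and hence in (iv).} You assert that $w_0^{\top}A^*u_d$ ``is a nonzero multiple of $\vth_1-\vth_d$''. That assertion is the entire content of (iii), and Lemma \ref{lem:Tvth} cannot supply it. If you carry out your substitution, the $\th^*$-terms cancel identically, because $\sum_{k=0}^d\eta_k(\th_0)\tau_{d-k}(\th_d)=0$ (this is $E_0E_d=0$). Then $E_0A^*E_d=0$ becomes $\Sigma(\vth)=0$, where $\Sigma(x)=\sum_{k=1}^d\eta_{k-1}(\th_0)\tau_{d-k}(\th_d)x_k$. Lemma \ref{lem:Tvth} only says that the $\beta$-recurrent $\vth$ with $\vth_0=\vth_{d+1}=0$ and $\vth_1=\vth_d$ are the multiples of $\vth^{(0)}_i=\sum_{\ell=0}^{i-1}(\th_\ell-\th_{d-\ell})/(\th_0-\th_d)$. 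It says nothing about sequences with $\vth_1\neq\vth_d$, which is exactly where you must show $\Sigma\neq0$. Even the ``if'' direction still requires $\Sigma(\vth^{(0)})=0$, which you never check.

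\textbf{What is missing} is an analysis of the space $W$ of $\beta$-recurrent $\{x_i\}_{i=0}^{d+1}$ with $x_0=x_{d+1}=0$. Extend $\{\th_i\}$ to $\th_{d+1}$ by the recurrence. Then $P(\th_d,\lambda)=(\lambda-\th_{d-1})(\lambda-\th_{d+1})$, so $P(\th_0,\th_d)=0$ if and only if $\th_{d+1}=\th_0$. Put $z_k=\th_{d-k+1}-\th_0$. This sequence is $\beta$-recurrent with $z_{d+1}=0$ and $z_0=\th_{d+1}-\th_0$.
\begin{itemize}
\item[] \emph{Case $\th_{d+1}\neq\th_0$.} Then $x\mapsto x_0$ is a nonzero functional on the two-dimensional space of $\beta$-recurrent sequences with $x_{d+1}=0$. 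So $W$ is the line through $\vth^{(0)}$, $\vth_1=\vth_d$ holds automatically, and your TD argument does give $E_0A^*E_d=0$.
\item[] \emph{Case $\th_{d+1}=\th_0$.} This is precisely the corner where, as you noticed, the TD relation is silent. Here $W$ is two-dimensional, spanned by $\vth^{(0)}$ and $z$. Since $\eta_k(\th_0)=\eta_{k-1}(\th_0)(\th_0-\th_{d-k+1})$, telescoping the identity above gives $\Sigma(z)=-\sum_{k=1}^d\eta_k(\th_0)\tau_{d-k}(\th_d)=\tau_d(\th_d)\neq0$. Meanwhile $z_1-z_d=\th_d-\th_1\neq0$.
\end{itemize}
So (iii) follows once you also prove $\Sigma(\vth^{(0)})=0$ in this circular case. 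One way is from closed forms for $\th_i$; another is a density argument in the free parameters $\th_0,\th_1,\th_2,\beta$ from the non-circular case. None of this appears in your proposal, so as written (iii) and (iv) are unproved.
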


\begin{prop} \label{prop:QCHTD}    \samepage 
\ifDRAFT {\rm prop:QCHTD}. \fi
For $d \geq 3$ the following are equivalent:
\begin{itemize}
\item[\rm (i)]
$E_i A^* E_j = 0$\quad if \quad $1 < j-i$ \quad $(0 \leq i,j \leq d)$;
\item[\rm (ii)]
$E^*_i A E^*_j = 0$ \quad if \quad $1 < j-i$ \quad $(0 \leq i,j \leq d)$;
\item[\rm (iii)]
$\vth_1 = \vth_d$
and 
there exists $\beta \in \F$ such that
each of 
$\{\th_i\}_{i=0}^d$,
$\{\th^*_i\}_{i=0}^d$,
$\{\vth_i\}_{i=0}^{d+1}$
is $\beta$-recurrent;
\item[\rm (iv)]
$\Phi$  is THS.
\end{itemize}
\end{prop}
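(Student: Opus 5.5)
We prove the cycle of implications (iii) $\Rightarrow$ (i),(ii) $\Rightarrow$ (iv) $\Rightarrow$ (i) $\Rightarrow$ (iii), and then obtain (ii) $\Rightarrow$ (iii) by duality.

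\emph{Easy directions.} (iii) $\Rightarrow$ (i) and (ii): this is immediate from Lemma \ref{lem:JHL}. Parts (i),(ii) of that lemma give the vanishing for $1<j-i<d$. Parts (iii),(iv), together with $\vth_1=\vth_d$, give $E_0A^*E_d=0$ and $E^*_0AE^*_d=0$. The only pair with $j-i\geq d$ is $(i,j)=(0,d)$, so all the required triple products vanish. Since $\Phi$ is already an HS, the conjunction of (i) and (ii) is exactly Definition \ref{def:THS}. Hence (i)$\wedge$(ii) $\Leftrightarrow$ (iv), so (iv) $\Rightarrow$ (i).

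\emph{Reduction to (i) $\Rightarrow$ (iii).} It suffices to show (i) $\Rightarrow$ (iii); then (ii) $\Rightarrow$ (iii) follows by applying this to $\Phi^*$. Indeed, (ii) for $\Phi$ is (i) for $\Phi^*$, and condition (iii) is self-dual. By Lemma \ref{lem:vths} we have $\vth^*_1=\vth_d$ and $\vth^*_d=\vth_1$. By Lemma \ref{lem:vths2}, $\beta$-recurrence of $\{\vth_i\}$ and of $\{\vth^*_i\}$ are equivalent. The roles of $\{\th_i\}$ and $\{\th^*_i\}$ simply swap.

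\emph{Main step: (i) $\Rightarrow$ (iii).} By Lemma \ref{lem:Hclassify} and isomorphism invariance, we may take $A,A^*$ to be the matrices of Lemma \ref{lem:matrixAAs}(iv). There $A$ is lower bidiagonal with diagonal $\th_d,\dots,\th_0$ and subdiagonal $1$'s, and $A^*$ is upper bidiagonal with diagonal $\th^*_0,\ldots,\th^*_d$ and superdiagonal $\phi_1,\ldots,\phi_d$. The eigenvectors of $A$ (right) and the dual eigenvectors (left) have explicit entries, namely ratios of products of differences $\th_k-\th_\ell$. Hence each scalar $E_iA^*E_j$ (a rank-one map times a scalar) is an explicit rational expression in the parameter array.

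Hypothesis (i) says $E_iA^*E_j=0$ for all $j-i\geq 2$. I would use only the two families $j-i=2$ and $j-i=3$, plus $(i,j)=(0,d)$:
\begin{enumerate}
\item[(a)] The conditions $E_{i}A^*E_{i+2}=0$ should give, after clearing nonzero denominators, a linear relation linking $\vth_i$ to the eigenvalue sequences. This would show that $\{\vth_i\}_{i=0}^{d+1}$ satisfies a three-term recursion whose coefficients involve $\th$'s and $\th^*$'s.
\item[(b)] The conditions $E_iA^*E_{i+3}=0$ then force the ratios in \eqref{eq:indep0} for $\{\th_i\}$ and $\{\th^*_i\}$ to be equal and independent of $i$. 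By Lemma \ref{lem:recseq0}, both sequences are then $\beta$-recurrent for a common $\beta$, and consequently $\{\vth_i\}$ is $\beta$-recurrent as well.
\item[(c)] Finally, $E_0A^*E_d=0$ gives $\vth_1=\vth_d$, mirroring Lemma \ref{lem:JHL}(iii).
\end{enumerate}
A cleaner alternative for (b) is to mimic \cite[Lemma 12.5]{T:Leonard} in reverse. By (i), $A^*$ acts tridiagonally on $E_0V,\dots,E_dV$; then one shows that the obstruction to \eqref{eq:td1} is supported on entries where (i) already forces vanishing.

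\emph{Expected obstacle.} The main difficulty is step (b): extracting a \emph{common} $\beta$ for $\th$, $\th^*$ and $\vth$ from the vanishing of $E_iA^*E_{i+3}$. Here $d\geq 3$ is used essentially, and the expressions must be organized so that nonvanishing factors such as $\th_k-\th_\ell$ and $\phi_i$ can be cancelled. I would try first to express $E_iA^*E_j$ as a divided difference in the $\th$'s of a function built from $\{\vth_k\}$. In that form, vanishing for $j-i=2,3$ reads as vanishing second and third divided differences, which is exactly a $\beta$-recurrence.
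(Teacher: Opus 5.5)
Your scaffolding is correct and matches the paper's: Lemma \ref{lem:JHL} gives (iii) $\Rightarrow$ (i),(ii), Definition \ref{def:THS} gives (i)$\wedge$(ii) $\Leftrightarrow$ (iv), and (ii) $\Rightarrow$ (iii) is (i) $\Rightarrow$ (iii) for $\Phi^*$ via Lemmas \ref{lem:vths}, \ref{lem:vths2}.

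\textbf{The gap.} The main step (i) $\Rightarrow$ (iii) is announced rather than proved. In particular, step (b), producing one $\beta$ for both $\{\th_i\}$ and $\{\th^*_i\}$, is the whole content of the implication, and it is never derived.

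\textbf{Your chosen conditions are too few for $d\geq 5$.} Each condition $E_iA^*E_j=0$ is one scalar polynomial equation in the $3d+2$ entries of the parameter array, since the product has rank at most one. You keep only $(d-1)+(d-2)+1=2d-2$ of them. Over an algebraically closed field, the parameter arrays of Lemma \ref{lem:Hclassify} form an irreducible open subset of affine $(3d+2)$-space. Hence every component of the zero set of your conditions that passes through a Leonard system has dimension at least $d+4$. By contrast, the locus cut out by (iii) has dimension at most $8$: $\{\th_i\}$ is determined by $\th_0,\th_1,\th_2,\beta$; $\{\th^*_i\}$ by $\th^*_0,\th^*_1,\th^*_2,\beta$; and $\{\vth_i\}$ by $\vth_1$ via Lemma \ref{lem:Tvth}. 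So for $d\geq 5$ there are Hessenberg systems satisfying all your chosen vanishing conditions but not (iii).

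\textbf{The ``cleaner alternative'' is circular.} Put $P(x,y)=x^2-\beta xy+y^2-\gamma(x+y)-\varrho$. Then the commutator in \eqref{eq:td1} equals $\sum_{i,j}(\th_i-\th_j)P(\th_i,\th_j)E_iA^*E_j$. After (i), only the terms with $|i-j|=1$ survive, and $E_iA^*E_{i-1}\neq 0$ by Definition \ref{def:HS}(iv). So the obstruction sits exactly where (i) says nothing. Relation \eqref{eq:td1} is then equivalent to $P(\th_{i-1},\th_i)=0$ $(1\leq i\leq d)$, which is the very recurrence you need. Running \cite[Lemma 12.5]{T:Leonard} backwards presupposes that $\beta,\gamma,\varrho$ already exist.

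\textbf{The missing idea is that existence statement.} The paper imports it as \cite[Lemma 12.2]{T:Leonard}: for $A,A^*$ in the bidiagonal form of Lemma \ref{lem:matrixAAs}(iv), condition (i) yields $\beta,\gamma,\varrho$ satisfying \eqref{eq:td1}. The rest is formal:
\begin{itemize}
\item Lemma \ref{lem:TL} makes $\{\th^*_i\}$ and $\{\vth_i\}$ $\beta$-recurrent and $\{\th_i\}$ $(\beta,\gamma,\varrho)$-recurrent. The common $\beta$ comes for free, because a single relation carries a single $\beta$.
\item Lemmas \ref{lem:recseq1}, \ref{lem:recseq2} (using $\th_{i-1}\neq\th_{i+1}$) then make $\{\th_i\}$ $\beta$-recurrent.
\item Only now are the hypotheses of Lemma \ref{lem:JHL} satisfied, and part (iii) of that lemma turns $E_0A^*E_d=0$ into $\vth_1=\vth_d$.
\end{itemize}
To avoid citing \cite[Lemma 12.2]{T:Leonard}, you would need a genuine proof that (i) forces a $(\beta,\gamma,\varrho)$-recurrence of $\{\th_i\}$. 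By the count above, for $d\geq 5$ such a proof must use the vanishing of some $E_iA^*E_j$ with $4\leq j-i\leq d-1$.
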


\begin{proof}
(i) $\Rightarrow$ (iii)
We may assume that $A,A^*$ are the matrices from  Lemma \ref{lem:matrixAAs}(iv).
By \cite[Lemma 12.2]{T:Leonard} there exist $\beta, \gamma, \varrho \in \F$
such that $A,A^*$ satisfy \eqref{eq:td1}.
By this and Lemma \ref{lem:TL}, 
$\{\th_i\}_{i=0}^d$ is $(\beta,\gamma, \varrho)$-recurrent,
and each of 
$\{\th^*_i\}_{i=0}^d$,
$\{\vth_i\}_{i=0}^{d+1}$ 
is $\beta$-recurrent.
By Lemmas \ref{lem:recseq1}, \ref{lem:recseq2}
the sequence $\{\th_i\}_{i=0}^d$ is $\beta$-recurrent.
By Lemma \ref{lem:JHL}(iii), $\vth_1 = \vth_d$.

(iii) $\Rightarrow$ (i)
By lemma \ref{lem:JHL}.

(ii) $\Leftrightarrow$ (iii)
Apply  (i) $\Leftrightarrow$ (iii) to $\Phi^*$.

(i),(ii) $\Leftrightarrow$ (iv)
By Definition \ref{def:THS}.
\end{proof}

We bring in some notation.
Let $\lambda$ denote an indeterminate.
Let the $\F$-algebra $\F[\lambda]$ consist of the polynomials in $\lambda$
that have all coefficients in $\F$.
For $0 \leq i \leq d$ define $\tau_i$, $\tau^*_i$, $\eta_i$, $\eta^*_i \in \F[\lambda]$
by
\begin{align*}
\tau_i &= (\lambda - \th_0)(\lambda -\th_1) \cdots (\lambda-\th_{i-1}),
&
\tau^*_i &= (\lambda - \th^*_0)(\lambda-\th^*_1) \cdots (\lambda-\th^*_{i-1}),
\\
\eta_i &= (\lambda - \th_d)(\lambda-\th_{d-1}) \cdots (\lambda-\th_{d-i+1}),
&
\eta^*_i &= (\lambda- \th^*_d)(\lambda-\th^*_{d-1}) \cdots (\lambda-\th^*_{d-i+1}).
\end{align*}

\begin{defi}   \label{def:flat}    \samepage
\ifDRAFT {\rm def:flat}. \fi
For $X \in \text{\rm End}(V)$ let $X^\flat$ denote
the matrix in $\Mat_{d+1}(\F)$ that represents $X$ with respect to an inverted $\Phi^*$-split basis
for $V$.
\end{defi}

The matrices $A^\flat$, $A^{* \flat}$ are given in Lemma \ref{lem:matrixAAs}(iv).
The following result is a variation on \cite[Lemmas 4.4, 4.6]{T:Leonard}.

\begin{lemma} \label{lem:matrixEEs}    \samepage
\ifDRAFT {\rm lem:matrixEEs}. \fi
For $0 \leq r \leq d$, the $(i,j)$-entry of $E_r^\flat$ and $E_r^{*\flat}$ are
\begin{align*}
(E_r^\flat)_{i,j} &=
\frac{ \eta_j (\th_{r}) \tau_{d-i}(\th_{r}) }
       { \eta_{d-r} (\th_{r}) \tau_{r} (\th_{r}) }
\qquad\qquad (0 \leq i,j \leq d),
\\
(E_r^{* \flat})_{i,j} &=
 \frac{ \phi_1 \phi_2 \cdots \phi_j }
        { \phi_1 \phi_2 \cdots \phi_i}
 \frac{\tau^*_i (\th^*_r) \eta^*_{d-j} (\th^*_r) }
        {\tau^*_r (\th^*_r) \eta^*_{d-r} (\th^*_r) }
\qquad\qquad   (0 \leq i,j \leq d).
\end{align*}
\end{lemma}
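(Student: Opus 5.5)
The plan is to use that each primitive idempotent is a rank-one matrix built from a right and a left eigenvector, and to read those eigenvectors off the bidiagonal matrices $A^\flat$, $A^{*\flat}$ of Lemma \ref{lem:matrixAAs}(iv). Since $A$ is multiplicity-free, $E_r V$ is one-dimensional and $E_r$ is the projection onto $E_r V$ along $\sum_{s\neq r}E_sV$. Hence $E_r^\flat = v w/(w v)$, where $v$ is a column vector with $A^\flat v=\th_r v$ and $w$ is a row vector with $w A^\flat=\th_r w$. Note that $wv\neq 0$ automatically, since $\text{tr}\,E_r=1$. The same description holds for $E_r^{*\flat}$ with $A^{*\flat}$ and $\th^*_r$. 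So the proof reduces to solving two bidiagonal recursions in each case and computing one scalar $wv$.

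For $E_r^\flat$, the matrix $A^\flat$ has $(i,i)$-entry $\th_{d-i}$ and $(i,i-1)$-entry $1$.
\begin{itemize}
\item Row $i$ of $(A^\flat-\th_r I)v=0$ reads $v_{i-1}+(\th_{d-i}-\th_r)v_i=0$. Since $\tau_{d-i+1}(\th_r)=(\th_r-\th_{d-i})\tau_{d-i}(\th_r)$, the choice $v_i=\tau_{d-i}(\th_r)$ satisfies every row. It is nonzero because $v_d=\tau_0(\th_r)=1$, and it vanishes for $i<d-r$, as it must.
\item Column $j$ of $w(A^\flat-\th_r I)=0$ reads $w_j(\th_{d-j}-\th_r)+w_{j+1}=0$. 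Since $\eta_{j+1}(\th_r)=(\th_r-\th_{d-j})\eta_j(\th_r)$, the choice $w_j=\eta_j(\th_r)$ works.
\item In $wv=\sum_j \eta_j(\th_r)\tau_{d-j}(\th_r)$, the factor $\eta_j(\th_r)$ vanishes for $j>d-r$, and $\tau_{d-j}(\th_r)$ vanishes for $d-j>r$. So only the term $j=d-r$ survives, giving $\eta_{d-r}(\th_r)\tau_r(\th_r)$. This yields the first formula.
\end{itemize}

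For $E_r^{*\flat}$, the matrix $A^{*\flat}$ has diagonal entries $\th^*_i$ and $(i-1,i)$-entry $\phi_i$.
\begin{itemize}
\item The right eigenvector satisfies $(\th^*_i-\th^*_r)u_i+\phi_{i+1}u_{i+1}=0$. Thus $u_{i+1}=u_i(\th^*_r-\th^*_i)/\phi_{i+1}$, and I take $u_i=\tau^*_i(\th^*_r)/(\phi_1\cdots\phi_i)$.
\item The left eigenvector satisfies $w_{j-1}\phi_j+w_j(\th^*_j-\th^*_r)=0$. Thus $w_{j-1}/w_j=(\th^*_r-\th^*_j)/\phi_j$. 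This matches $w_j=\phi_1\cdots\phi_j\,\eta^*_{d-j}(\th^*_r)$, using $\eta^*_{d-j+1}(\th^*_r)=(\th^*_r-\th^*_j)\eta^*_{d-j}(\th^*_r)$.
\item In $wu=\sum_i \tau^*_i(\th^*_r)\eta^*_{d-i}(\th^*_r)$ the $\phi$'s cancel. Only $i=r$ survives, because $\tau^*_i(\th^*_r)=0$ for $i>r$ and $\eta^*_{d-i}(\th^*_r)=0$ for $i<r$.
\item Forming $u_i w_j/(wu)$ gives the second formula.
\end{itemize}

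The only delicate point is the vanishing pattern. One must check that the chosen $v$ and $u$ are genuinely nonzero and satisfy the recursion even in the rows where $\th_{d-i}=\th_r$ or $\th^*_i=\th^*_r$, which is where the recursion cannot be divided through. This holds because the polynomial formulas satisfy the recursion identically, including in those rows. The vanishing pattern is also what collapses each normalizing sum to a single nonzero term, which is nonzero since the eigenvalues are mutually distinct.
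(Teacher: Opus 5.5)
Your argument is correct. It begins the same way as the paper's: both pass to the bidiagonal matrices $A^\flat$, $A^{*\flat}$ of Lemma \ref{lem:matrixAAs}(iv).

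From there the paper does no computation. It simply invokes \cite[Lemmas 4.4, 4.6]{T:Leonard}, which give the primitive idempotents of a lower (resp.\ upper) bidiagonal matrix with distinct diagonal entries. You instead derive those entries directly:
\begin{itemize}
\item you write $E_r^\flat = vw/(wv)$, where $v$, $w$ are the right and left $\th_r$-eigenvectors, and similarly for $E_r^{*\flat}$;
\item you solve the two-term recursions for these eigenvectors;
\item you use the complementary vanishing of $\eta_j(\th_r)$ (for $j>d-r$) and $\tau_{d-j}(\th_r)$ (for $j<d-r$), and likewise in the starred case, to collapse the normalizing sum to a single nonzero term.
\end{itemize}
The recursions and normalizations in both cases yield exactly the stated formulas. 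Your route gives a self-contained proof and shows where the ratio $\phi_1\cdots\phi_j/\phi_1\cdots\phi_i$ comes from. The cost is bookkeeping that the paper outsources to the citation.

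One small point should be tightened. The boundary equations are not of the two-term form you wrote; these are row $0$ for $v$, column $d$ for $w$, row $d$ for $u$, and column $0$ for the left eigenvector of $A^{*\flat}$. For instance, row $0$ reads $(\th_d-\th_r)v_0=0$. It holds because $v_0=\tau_d(\th_r)$ vanishes when $r<d$, and the coefficient vanishes when $r=d$; the other three cases work the same way. More simply, the remaining $d$ equations already have rank $d$, since the subdiagonal entries $1$ (resp.\ the superdiagonal entries $\phi_i\neq 0$) are nonzero. So their one-dimensional solution space must coincide with the eigenspace, and the boundary equation is automatic.
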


\begin{proof}
We may assume that $A,A^*$ are the matrices from Lemma \ref{lem:matrixAAs}(iv).
Now use \cite[Lemmas 4.4, 4.6]{T:Leonard}.
\end{proof}

Until further notice, we  fix an integer $r$ $(2 \leq r \leq d)$ and 
consider the entries of
\begin{equation}
(E_{d-r} A^* E_{d-r+2})^\flat.            \label{eq:triple}
\end{equation}

\begin{lemma}   \label{lem:entrypre}   \samepage
\ifDRAFT {\rm lem:entrypre}. \fi
For $0 \leq i,j \leq d$ the $(i,j)$-entry of \eqref{eq:triple} is 
\[
 \frac{\tau_{d-i} (\th_{d-r}) \eta_j (\th_{d-r+2}) }
        {\tau_{d-r} (\th_{d-r}) \eta_{r-2} (\th_{d-r+2}) }
\]
times
the $(r,r-2)$-entry of \eqref{eq:triple}.
\end{lemma}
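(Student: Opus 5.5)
The plan is to use the rank-one structure of the primitive idempotents. By Lemma \ref{lem:matrixEEs}, $E_{d-r}^\flat$ factors as a column times a row. Its $i$-th column factor is $\tau_{d-i}(\th_{d-r})$ and its $j$-th row factor is $\eta_j(\th_{d-r})$, divided by $\eta_{r}(\th_{d-r})\tau_{d-r}(\th_{d-r})$. Likewise $E_{d-r+2}^\flat$ has column factor $\tau_{d-i}(\th_{d-r+2})$ and row factor $\eta_j(\th_{d-r+2})$, divided by $\eta_{r-2}(\th_{d-r+2})\tau_{d-r+2}(\th_{d-r+2})$.

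Hence
\[
(E_{d-r}A^*E_{d-r+2})^\flat = E_{d-r}^\flat A^{*\flat} E_{d-r+2}^\flat = c\, x y^{t},
\]
where $x_i = \tau_{d-i}(\th_{d-r})$, $y_j = \eta_j(\th_{d-r+2})$, and $c$ is a scalar. Explicitly, $c$ equals the product of the two normalizing denominators inverted, times the bilinear middle factor $\sum_{k,l}\eta_k(\th_{d-r})\,(A^{*\flat})_{k,l}\,\tau_{d-l}(\th_{d-r+2})$. Thus the $(i,j)$-entry of \eqref{eq:triple} is $c\,x_i y_j$.

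Next, compare with the $(r,r-2)$-entry, which is $c\,x_r y_{r-2} = c\,\tau_{d-r}(\th_{d-r})\,\eta_{r-2}(\th_{d-r+2})$. Both factors are nonzero, since $\tau_{d-r}(\th_{d-r})=\prod_{k<d-r}(\th_{d-r}-\th_k)$ and $\eta_{r-2}(\th_{d-r+2})=\prod_{k>d-r+2}(\th_{d-r+2}-\th_k)$, and the $\th_i$ are mutually distinct. Therefore
\[
c\,x_i y_j = \frac{\tau_{d-i}(\th_{d-r})\,\eta_j(\th_{d-r+2})}{\tau_{d-r}(\th_{d-r})\,\eta_{r-2}(\th_{d-r+2})}\cdot c\,x_r y_{r-2},
\]
which is the claim. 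The argument holds whether or not $c=0$.

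The only point needing care is the bookkeeping. I must confirm that in Lemma \ref{lem:matrixEEs} the index $r$ there is replaced by $d-r$ and by $d-r+2$. I must also check that the row dependence of $E_{d-r}^\flat$ is through $\tau_{d-i}(\th_{d-r})$ and the column dependence of $E_{d-r+2}^\flat$ is through $\eta_j(\th_{d-r+2})$, and that the product of matrices preserves this outer-product form. This is immediate, so I expect no real obstacle.
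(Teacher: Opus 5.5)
Your proof is correct and is essentially the paper's ``routine verification'' via Lemma \ref{lem:matrixEEs}: the rank-one form of $E_{d-r}^\flat$ and $E_{d-r+2}^\flat$ makes the triple product equal to $c\,xy^{t}$ with $x_i=\tau_{d-i}(\th_{d-r})$ and $y_j=\eta_j(\th_{d-r+2})$, and the nonvanishing of $\tau_{d-r}(\th_{d-r})\,\eta_{r-2}(\th_{d-r+2})$ gives the stated ratio. Your argument does not need the explicit form of $A^{*\flat}$ from Lemma \ref{lem:matrixAAs}(iv); that form only matters later, when the $(r,r-2)$-entry itself is computed in Lemma \ref{lem:entry0}.
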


\begin{proof}
Routine verification using Lemmas \ref{lem:matrixAAs}(iv) and \ref{lem:matrixEEs}.
\end{proof}

\begin{lemma}  \label{lem:entry0d2}   \samepage
\ifDRAFT {\rm lem:entry0d2}. \fi
Assume that $d=2$.
Then the $(2,0)$-entry of $(E_0 A^* E_2)^\flat$ is
\begin{align*}
 & - \frac{\phi_1 - \phi_2} 
             {(\th_0 - \th_1)(\th_0 - \th_2)(\th_1 - \th_2)}
\\ &\qquad\qquad
+ \frac{\th^*_0 } { (\th_0 - \th_1)(\th_0 - \th_2) }
- \frac{\th^*_1 } { (\th_0 - \th_1)(\th_1 - \th_2) }
+ \frac{\th^*_2 } { (\th_0 - \th_2)(\th_1 - \th_2) }.
\end{align*}
All other entries are $0$.
\end{lemma}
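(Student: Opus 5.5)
The plan is to use Lemma \ref{lem:entrypre} to show that only the $(2,0)$-entry can be nonzero, and then to compute that entry directly from Lemmas \ref{lem:matrixAAs}(iv) and \ref{lem:matrixEEs}.

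\emph{Vanishing of the other entries.} Take $d=2$ and $r=2$, so that \eqref{eq:triple} is $(E_0A^*E_2)^\flat$. By Lemma \ref{lem:entrypre}, the $(i,j)$-entry is the $(2,0)$-entry multiplied by
\[
\frac{\tau_{2-i}(\th_0)\,\eta_j(\th_2)}{\tau_0(\th_0)\,\eta_0(\th_2)}
= \tau_{2-i}(\th_0)\,\eta_j(\th_2).
\]
For $k\geq 1$ the polynomial $\tau_k$ contains the factor $\lambda-\th_0$, so $\tau_{2-i}(\th_0)=0$ unless $i=2$. Likewise, for $j\geq 1$ the polynomial $\eta_j$ contains the factor $\lambda-\th_2$, so $\eta_j(\th_2)=0$ unless $j=0$. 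Hence every entry other than the $(2,0)$-entry is $0$.

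\emph{Computing the $(2,0)$-entry.} Write this entry as
\[
\sum_{k,l}(E_0^\flat)_{2,k}\,(A^{*\flat})_{k,l}\,(E_2^\flat)_{l,0}.
\]
By Lemma \ref{lem:matrixEEs},
\[
(E_0^\flat)_{2,k}=a_k:=\frac{\eta_k(\th_0)}{\eta_2(\th_0)},
\qquad
(E_2^\flat)_{l,0}=b_l:=\frac{\tau_{2-l}(\th_2)}{\tau_2(\th_2)}.
\]
Explicitly,
\[
a_0=\frac{1}{(\th_0-\th_2)(\th_0-\th_1)},\quad
a_1=\frac{1}{\th_0-\th_1},\quad a_2=1,
\]
\[
b_0=1,\quad
b_1=\frac{1}{\th_2-\th_1},\quad
b_2=\frac{1}{(\th_2-\th_0)(\th_2-\th_1)}.
\]
By Lemma \ref{lem:matrixAAs}(iv), $A^{*\flat}$ is upper bidiagonal with diagonal entries $\th^*_k$ and superdiagonal entries $\phi_{k+1}$. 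The entry therefore splits as
\[
\sum_{k=0}^2 a_k\th^*_k b_k \;+\; a_0\phi_1b_1 \;+\; a_1\phi_2b_2 .
\]

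\emph{Matching the claimed formula.} The diagonal sum gives the three $\th^*$ terms: $a_0b_0=1/((\th_0-\th_1)(\th_0-\th_2))$, $a_1b_1=-1/((\th_0-\th_1)(\th_1-\th_2))$, and $a_2b_2=1/((\th_0-\th_2)(\th_1-\th_2))$. For the off-diagonal part, put both products over the common denominator $(\th_0-\th_1)(\th_0-\th_2)(\th_1-\th_2)$. This gives $a_0b_1=-a_1b_2=1/\bigl((\th_0-\th_2)(\th_0-\th_1)(\th_2-\th_1)\bigr)$, so the two terms combine to $-(\phi_1-\phi_2)/\bigl((\th_0-\th_1)(\th_0-\th_2)(\th_1-\th_2)\bigr)$.

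The only real obstacle is sign bookkeeping in this last simplification. The computation itself is routine once the general formulas of Lemma \ref{lem:matrixEEs} are specialized to the single row and column needed.
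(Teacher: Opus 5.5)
Your proposal is correct and is essentially the paper's proof. The paper just says the result is a routine computation from Lemmas \ref{lem:matrixAAs}(iv) and \ref{lem:matrixEEs}, and you carry out exactly that computation; your values of $a_k$, $b_l$, the three diagonal products, the identity $a_0b_1=-a_1b_2$, and the citation of Lemma \ref{lem:entrypre} (which precedes this lemma and rests on the same two lemmas) for the vanishing entries all check out.
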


\begin{proof}
Routine using Lemmas \ref{lem:matrixAAs}(iv) and \ref{lem:matrixEEs}.
\end{proof}

The next result is a variation on Lemma \ref{lem:entry0d2}.

\begin{lemma}   \label{lem:entry0dno2d2} \samepage
\ifDRAFT {lem:entry0dno2d2}. \fi
Assume that $d=2$.
Then the $(2,0)$-entry of $(E_0 A^* E_2)^\flat$ is
\[
- \frac{\vth_1 - \vth_2} {(\th_0 - \th_1)(\th_0 - \th_2)(\th_1 - \th_2)}.
\]
All other entries are $0$.
\end{lemma}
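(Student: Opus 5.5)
The plan is to derive this directly from Lemma \ref{lem:entry0d2}. That lemma already says all entries other than the $(2,0)$-entry vanish, so only the value of the $(2,0)$-entry needs to be rewritten in terms of $\vth_1,\vth_2$. By Definition \ref{def:vth} with $d=2$ we have
\[
\phi_1 = \vth_1 + (\th^*_1 - \th^*_0)(\th_2 - \th_0),
\qquad\qquad
\phi_2 = \vth_2 + (\th^*_2 - \th^*_0)(\th_1 - \th_0).
\]
I will substitute these into the expression of Lemma \ref{lem:entry0d2}. Then it suffices to show that the terms involving the $\th^*_i$ cancel, namely
\[
-\frac{(\th^*_1 - \th^*_0)(\th_2-\th_0) - (\th^*_2-\th^*_0)(\th_1-\th_0)}{(\th_0-\th_1)(\th_0-\th_2)(\th_1-\th_2)}
+ \frac{\th^*_0}{(\th_0-\th_1)(\th_0-\th_2)}
- \frac{\th^*_1}{(\th_0-\th_1)(\th_1-\th_2)}
+ \frac{\th^*_2}{(\th_0-\th_2)(\th_1-\th_2)} = 0.
\]

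To verify this, I multiply through by $D=(\th_0-\th_1)(\th_0-\th_2)(\th_1-\th_2)$. The last three terms become
\[
\th^*_0(\th_1-\th_2) - \th^*_1(\th_0-\th_2) + \th^*_2(\th_0-\th_1).
\]
The first term becomes
\[
-(\th^*_1-\th^*_0)(\th_2-\th_0) + (\th^*_2-\th^*_0)(\th_1-\th_0).
\]
Next I compare coefficients of each $\th^*_i$.
\begin{itemize}
\item[\rm (1)] For $\th^*_1$: $-(\th_2-\th_0) - (\th_0-\th_2) = 0$.
\item[\rm (2)] For $\th^*_2$: $(\th_1-\th_0) + (\th_0-\th_1) = 0$.
\item[\rm (3)] For $\th^*_0$: $(\th_2-\th_0) - (\th_1-\th_0) + (\th_1-\th_2) = 0$.
\end{itemize}
So the $\th^*$-terms cancel. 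What remains is $-(\vth_1-\vth_2)/D$, which is the asserted entry.

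There is no real obstacle. The only care needed is with signs, that is, with the orientation of the factors $(\th_{d-i+1}-\th_0)$ in Definition \ref{def:vth} when $d=2$, and these I have written out explicitly above.
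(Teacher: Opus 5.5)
Your proposal is correct and follows the paper's proof: substitute Definition \ref{def:vth} with $d=2$ into Lemma \ref{lem:entry0d2}, and take the vanishing of all other entries directly from that lemma. Your check that the coefficients of $\th^*_0$, $\th^*_1$, $\th^*_2$ cancel after multiplying by $(\th_0-\th_1)(\th_0-\th_2)(\th_1-\th_2)$ is accurate.
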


\begin{proof}
Routine verification using 
Definition \ref{def:vth} and Lemma \ref{lem:entry0d2}.
\end{proof}

\begin{prop}    \label{prop:vthd2}   \samepage
\ifDRAFT {\rm prop:vthd2}. \fi
Assume that $d=2$.
Then the following are equivalent:
\begin{itemize}
\item[\rm (i)]
$E_0 A^* E_2 = 0$;
\item[\rm (ii)]
$E^*_0 A E^*_2 = 0$;
\item[\rm (iii)]
$\vth_1 = \vth_2$;
\item[\rm (iv)]
$\Phi$ is THS.
\end{itemize}
\end{prop}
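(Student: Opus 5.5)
The plan is to prove (i) $\Leftrightarrow$ (iii) directly from Lemma \ref{lem:entry0dno2d2}, and then obtain (ii) $\Leftrightarrow$ (iii) by duality and (i),(ii) $\Leftrightarrow$ (iv) from Definition \ref{def:THS}.

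For (i) $\Leftrightarrow$ (iii), note that the map $X \mapsto X^\flat$ is an algebra isomorphism $\text{End}(V) \to \Mat_{d+1}(\F)$. Hence $E_0 A^* E_2 = 0$ if and only if $(E_0 A^* E_2)^\flat = 0$. By Lemma \ref{lem:entry0dno2d2}, every entry of $(E_0A^*E_2)^\flat$ vanishes except possibly the $(2,0)$-entry, which equals
\[
- \frac{\vth_1 - \vth_2}{(\th_0 - \th_1)(\th_0 - \th_2)(\th_1 - \th_2)}.
\]
The denominator is nonzero because $\{\th_i\}_{i=0}^d$ are mutually distinct (Lemma \ref{lem:Hclassify}). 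So this entry vanishes exactly when $\vth_1 = \vth_2$, and this gives (i) $\Leftrightarrow$ (iii).

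For (ii) $\Leftrightarrow$ (iii), apply (i) $\Leftrightarrow$ (iii) to $\Phi^*$. This gives $E^*_0 A E^*_2 = 0$ if and only if $\vth^*_1 = \vth^*_2$. By Lemma \ref{lem:vths} with $d=2$, we have $\vth^*_1 = \vth_2$ and $\vth^*_2 = \vth_1$, so the condition $\vth^*_1 = \vth^*_2$ is the same as $\vth_1 = \vth_2$.

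For (iv), observe that when $d=2$ the only pair $(i,j)$ with $0 \le i,j \le 2$ and $1 < j-i$ is $(0,2)$. Thus Definition \ref{def:THS} says exactly that $\Phi$ is THS if and only if both $E_0A^*E_2=0$ and $E^*_0AE^*_2=0$, i.e. (i) and (ii) together. Since (i) and (ii) are each equivalent to (iii), (iv) is equivalent to each of them.

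No step here presents a real obstacle: the only computational content is already contained in Lemma \ref{lem:entry0dno2d2}. The points needing care are that $\flat$ is injective, which holds because it is a matrix representation with respect to a basis, and the index bookkeeping in the duality step via Lemma \ref{lem:vths}.
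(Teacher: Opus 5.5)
Your proposal is correct and follows the paper's proof exactly: (i) $\Leftrightarrow$ (iii) from Lemma~\ref{lem:entry0dno2d2}, then (ii) $\Leftrightarrow$ (iii) by applying this to $\Phi^*$, and (i),(ii) $\Leftrightarrow$ (iv) from Definition~\ref{def:THS}. The extra details you supply are sound and are left implicit in the paper: injectivity of $X \mapsto X^\flat$, the nonzero denominator, the index check $\vth^*_1=\vth_2$, $\vth^*_2=\vth_1$, and $(0,2)$ being the only relevant index pair.
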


\begin{proof}
(i) $\Leftrightarrow$ (iii)
By Lemma \ref{lem:entry0dno2d2}.

(ii) $\Leftrightarrow$ (iii)
Apply (i) $\Leftrightarrow$ (iii) to $\Phi^*$.

(i),(ii) $\Leftrightarrow$ (iv)
By Definition \ref{def:THS}.
\end{proof}

\begin{lemma}   \label{lem:entry0}    \samepage
\ifDRAFT {\rm lem:entry0}. \fi
Assume that $d \geq 3$.
Then the $(r, r-2)$-entry of \eqref{eq:triple} is as follows.
For $r=2$  the entry is
\[
\frac{1} {
(\th_{d-3} - \th_d)(\th_{d-2} - \th_d)(\th_{d-1} - \th_d) }
\]
times
\begin{align*}
&
 - \frac{\th_{d-3}-\th_d }{\th_{d-2} - \th_{d-1} } \phi_1
 + \frac{\th_{d-3}-\th_d }{\th_{d-2} - \th_{d-1} } \phi_2
  -\phi_3
\\ & \qquad\qquad\qquad
 + (\th_{d-3} - \th_d)
 \bigg(
  \frac{\th_{d-1} - \th_d } { \th_{d-2} - \th_{d-1}} \th^*_0
 -  \frac{\th_{d-2} - \th_d } { \th_{d-2} - \th_{d-1}} \th^*_1
 + \th^*_2
 \bigg).
\end{align*}
For $3 \leq r \leq d-1$ the entry is
\[
\frac{1 } {
(\th_{d-r} - \th_{d-r+1})(\th_{d-r} - \th_{d-r+2})(\th_{d-r} - \th_{d-r+3}) 
         }
\]
times
\begin{align*}
& \phi_{r-2} 
  - \frac{\th_{d-r} - \th_{d-r+3} } { \th_{d-r+1} - \th_{d-r+2} } \phi_{r-1}
  + \frac{\th_{d-r} - \th_{d-r+3} } { \th_{d-r+1} - \th_{d-r+2} } \phi_{r}
  - \frac{ (\th_{d-r}- \th_{d-r+1}) (\th_{d-r} - \th_{d-r+3}) }
            { (\th_{d-r-1} - \th_{d-r+2})(\th_{d-r+1} - \th_{d-r+2}) } \phi_{r+1}
\\ & \qquad \qquad \qquad \qquad
 + (\th_{d-r}- \th_{d-r+3})
 \bigg(
   \th^*_{r-2}
 - \frac{\th_{d-r} - \th_{d-r+2}} { \th_{d-r+1} - \th_{d-r+2} } \th^*_{r-1}
 + \frac{\th_{d-r} - \th_{d-r+1} } { \th_{d-r+1} - \th_{d-r+2} } \th^*_{r} 
\bigg).
\end{align*}
For $r=d$ the entry is
\[
\frac{1} {
(\th_0 - \th_1)
(\th_0 - \th_2)
(\th_0 - \th_3)
 }
\]
times
\begin{align*}
&\phi_{d-2}
- \frac{\th_0 -\th _3} { \th_1 - \th_2 } \phi_{d-1}
+  \frac{\th_0 -\th _3} { \th_1 - \th_2 } \phi_{d}
+(\th_0 - \th_3)
\bigg(
\th^*_{d-2} 
- \frac{\th_0 - \th_2} { \th_1 - \th_2 } \th^*_{d-1}
+ \frac{\th_0 - \th_1} { \th_1 - \th_2 } \th^*_{d}
\bigg).
\end{align*}

\end{lemma}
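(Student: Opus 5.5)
We compute the $(r,r-2)$-entry of \eqref{eq:triple} directly as a matrix product in the inverted $\Phi^*$-split basis. We have
\[
\bigl((E_{d-r}A^*E_{d-r+2})^\flat\bigr)_{r,r-2}=\sum_{k,\ell}(E_{d-r}^\flat)_{r,k}\,(A^{*\flat})_{k,\ell}\,(E_{d-r+2}^\flat)_{\ell,r-2}.
\]
By Lemma \ref{lem:matrixAAs}(iv), $A^{*\flat}$ is upper bidiagonal, with $(A^{*\flat})_{k,k}=\th^*_k$ and $(A^{*\flat})_{k,k+1}=\phi_{k+1}$. So the sum splits into a diagonal part $\sum_k \th^*_k\,(E_{d-r}^\flat)_{r,k}(E_{d-r+2}^\flat)_{k,r-2}$ and an off-diagonal part $\sum_k \phi_{k+1}(E_{d-r}^\flat)_{r,k}(E_{d-r+2}^\flat)_{k+1,r-2}$.

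Next we insert the closed forms from Lemma \ref{lem:matrixEEs}. Recall $\tau_{m}(\th_s)=0$ when $m>s$ and $\eta_m(\th_s)=0$ when $m>d-s$. Then:
\begin{itemize}
\item $(E_{d-r}^\flat)_{r,k}$ contains the factor $\eta_k(\th_{d-r})$, which vanishes for $k>r$;
\item $(E_{d-r+2}^\flat)_{k,r-2}$ contains the factor $\tau_{d-k}(\th_{d-r+2})$, which vanishes for $d-k>d-r+2$, that is, for $k<r-2$.
\end{itemize}
Hence only $k\in\{r-2,r-1,r\}$ contribute to the diagonal part, and only $k+1\in\{r-1,r,r+1\}$ (with $k\le r$) contribute to the off-diagonal part. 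This leaves $\phi_{r-2},\phi_{r-1},\phi_r,\phi_{r+1}$ and $\th^*_{r-2},\th^*_{r-1},\th^*_r$. These are exactly the terms in the stated formula, with the convention $\phi_0=\phi_{d+1}=0$.

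Each surviving coefficient is a ratio of products of differences $\th_a-\th_b$. The remaining work is to evaluate these ratios explicitly and then factor out the common prefactor $1/\bigl((\th_{d-r}-\th_{d-r+1})(\th_{d-r}-\th_{d-r+2})(\th_{d-r}-\th_{d-r+3})\bigr)$. The boundary cases are handled separately:
\begin{itemize}
\item For $r=d$, the terms involving $\phi_{d+1}=0$ and $\th_{-1}$ disappear.
\item For $r=2$, the $\phi_0$ term drops out. The surviving indices shift, giving $\phi_1,\phi_2,\phi_3$ and $\th^*_0,\th^*_1,\th^*_2$. The prefactor there is naturally expressed through $\th_{d-3}-\th_d$, etc., so we rescale the expression (multiply and divide) to match the displayed normalization.
\end{itemize}
The case distinction reflects which of the index ranges $0\le k\le d$ truncate the three- or four-term window.

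The main obstacle is the bookkeeping: simplifying quotients such as $\eta_{r-1}(\th_{d-r})/\eta_{r}(\th_{d-r})$, or $\tau_{d-r+1}(\th_{d-r+2})/\tau_{d-r+2}(\th_{d-r+2})$, into single linear factors, and keeping the signs straight. I would first write each of the seven coefficients as a product of at most three linear factors over the normalizing denominator $\tau_{d-r}(\th_{d-r})\eta_{r-2}(\th_{d-r+2})$ appearing in Lemma \ref{lem:entrypre}. As a check, I would verify the $d=3$ cases against Lemma \ref{lem:entry0d2}-style direct computation.
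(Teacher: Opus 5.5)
Your proposal is correct and follows the paper's route. The paper's proof is just ``routine verification using Lemmas \ref{lem:matrixAAs}(iv) and \ref{lem:matrixEEs}'', which is exactly the bidiagonal expansion, window truncation and simplification you outline; the seven coefficients do come out as stated, and for $r=2$ the prefactor comes from the $\phi_3$ term because $\th_{d-r+3}$ is undefined there.

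There are two minor bookkeeping slips. Since $(E^\flat_{d-r+2})_{k+1,r-2}$ vanishes only when $k+1<r-2$, the off-diagonal window is $k+1\in\{r-2,r-1,r,r+1\}$, not $\{r-1,r,r+1\}$; your list $\phi_{r-2},\ldots,\phi_{r+1}$ already reflects this. Also, in this entry the factors $\tau_{d-r}(\th_{d-r})$ and $\eta_{r-2}(\th_{d-r+2})$ cancel, so the relevant denominators are $\eta_r(\th_{d-r})\,\tau_{d-r+2}(\th_{d-r+2})$.
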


\begin{proof}
Routine verification using Lemmas \ref{lem:matrixAAs}(iv) and \ref{lem:matrixEEs}.
\end{proof}

The next result is a variation on Lemma \ref{lem:entry0}.

\begin{lemma}   \label{lem:entry}    \samepage
\ifDRAFT {\rm lem:entry}. \fi
Assume that $d \geq 3$.
Then the $(r, r-2)$-entry of \eqref{eq:triple}
is as follows.
For $r=2$  the entry is
\[
\frac{1} {(\th_{d-3} - \th_d)(\th_{d-2} - \th_d)(\th_{d-1} - \th_d) }
\]
times
\begin{align*}
& (\th_0 - \th_{d-2} )
\bigg(
- \th^*_0 
+ \frac{\th_{d-3}- \th_d }{ \th_{d-2} - \th_{d-1} } \th^*_1
- \frac{\th_{d-3}- \th_d }{ \th_{d-2} - \th_{d-1} } \th^*_2
+\th^*_3
\bigg)
\\ & \qquad \qquad \qquad \qquad \qquad \qquad \qquad
- \frac{\th_{d-3}- \th_d }{ \th_{d-2} - \th_{d-1} }  \vth_1
+ \frac{\th_{d-3}- \th_d }{ \th_{d-2} - \th_{d-1} }  \vth_2
-\vth_3.
\end{align*}
For $3 \leq r \leq d-1$ the entry is
\[
\frac{1 } {
(\th_{d-r} - \th_{d-r+1})(\th_{d-r} - \th_{d-r+2})(\th_{d-r} - \th_{d-r+3}) 
      }
\]
times
\begin{align*}
& (\th_0 - \th_{d-r})\bigg(
  1 - \frac{(\th_{d-r} - \th_{d-r+1}) ( \th_{d-r} - \th_{d-r+3} ) }
               { (\th_{d-r-1} - \th_{d-r+2}) (\th_{d-r+1} - \th_{d-r+2}) } \bigg) \th^*_0 
\\ & \qquad
+ (\th_0 - \th_{d-r})\bigg(
 - \th^*_{r-2}
 + \frac {\th_{d-r} - \th_{d-r+3} }{ \th_{d-r+1} - \th_{d-r+2} } \th^*_{r-1}
 -  \frac {\th_{d-r} - \th_{d-r+3} }{ \th_{d-r+1} - \th_{d-r+2} } \th^*_{r}
\\ & \qquad \qquad \qquad \qquad \qquad \qquad \qquad \qquad
  + \frac{ (\th_{d-r} - \th_{d-r+1}) (\th_{d-r} - \th_{d-r+3}  ) }
           { (\th_{d-r-1} - \th_{d-r+2} ) (\th_{d-r+1} - \th_{d-r+2}) } \th^*_{r+1} \bigg)
\\ & \qquad
+ \vth_{r-2} 
- \frac{\th_{d-r} - \th_{d-r+3} } { \th_{d-r+1} - \th_{d-r+2} } \vth_{r-1} 
+ \frac{\th_{d-r} - \th_{d-r+3} } { \th_{d-r+1} - \th_{d-r+2} } \vth_{r} 
\\ & \qquad \qquad \qquad \qquad \qquad \qquad \qquad \qquad
- \frac{(\th_{d-r} - \th_{d-r+1} ) ( \th_{d-r} - \th_{d-r+3} ) }
          {(\th_{d-r-1} - \th_{d-r+2}) (\th_{d-r+1} - \th_{d-r+2}) } \vth_{r+1}.
\end{align*}
For $r=d$ the entry is
\[
\frac{1} {
(\th_0 - \th_1)
(\th_0 - \th_2)
(\th_0 - \th_3)
}
\]
times
\[
\vth_{d-2} 
- \frac{\th_0 - \th_3} { \th_1 - \th_2} \vth_{d-1}
+ \frac{\th_0 - \th_3} { \th_1 - \th_2} \vth_{d}.
\]
\end{lemma}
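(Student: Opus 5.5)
The plan is to obtain Lemma \ref{lem:entry} directly from Lemma \ref{lem:entry0}. The two lemmas give the same $(r,r-2)$-entry of \eqref{eq:triple} with the same prefactor in each of the three cases, so it suffices to show that the bracketed expressions agree. Concretely, in each expression of Lemma \ref{lem:entry0} I will replace every $\phi_k$ by
\[
\phi_k = \vth_k + (\th^*_k - \th^*_0)(\th_{d-k+1} - \th_0),
\]
which is Definition \ref{def:vth}. For $k=0$ and $k=d+1$ this formula is consistent with the conventions $\phi_0=\phi_{d+1}=0$ and $\vth_0=\vth_{d+1}=0$, provided we read $\th^*_{d+1}$ as arbitrary, since the factor $\th_{d-k+1}-\th_0$ vanishes at $k=d+1$.

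After substitution, the $\phi$-part of each bracket becomes the corresponding $\vth$-combination with the same coefficients, which are exactly the $\vth$-terms displayed in Lemma \ref{lem:entry}. What remains to check is that the leftover terms, those involving $\th^*$, match. For each case the steps are as follows.

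\textbf{Case $r=d$.} The leftover terms are
\[
\sum_k c_k(\th^*_k-\th^*_0)(\th_{d-k+1}-\th_0) + (\th_0-\th_3)(\cdots),
\]
with $k=d-2,d-1,d$ and coefficients $c_k = 1,\ -\tfrac{\th_0-\th_3}{\th_1-\th_2},\ \tfrac{\th_0-\th_3}{\th_1-\th_2}$. The factors $\th_{d-k+1}-\th_0$ are then $\th_3-\th_0$, $\th_2-\th_0$ and $\th_1-\th_0$ respectively.
\begin{itemize}
\item The coefficient of $\th^*_{d-2}$ becomes $(\th_3-\th_0)+(\th_0-\th_3)=0$.
\item The coefficient of $\th^*_{d-1}$ becomes $-\tfrac{(\th_0-\th_3)(\th_2-\th_0)}{\th_1-\th_2}-\tfrac{(\th_0-\th_3)(\th_0-\th_2)}{\th_1-\th_2}=0$.
\item The coefficient of $\th^*_d$ vanishes in the same way.
\item The $\th^*_0$ coefficient is $-\sum_k c_k(\th_{d-k+1}-\th_0)$. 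This also vanishes, since $(\th_3-\th_0)(\th_1-\th_2)-(\th_0-\th_3)(\th_2-\th_0)+(\th_0-\th_3)(\th_1-\th_0)=(\th_3-\th_0)\big[(\th_1-\th_2)+(\th_2-\th_0)-(\th_1-\th_0)\big]=0$.
\end{itemize}
Only the $\vth$-terms survive, as claimed.

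\textbf{Cases $r=2$ and $3\le r\le d-1$.} These are handled identically, collecting coefficients of $\th^*_0,\th^*_{r-2},\dots,\th^*_{r+1}$. For example, in the middle case the $\th^*_{r-2}$ coefficient becomes
\[
(\th_{d-r+3}-\th_0)+(\th_{d-r}-\th_{d-r+3}) = -(\th_0-\th_{d-r}),
\]
which matches. The $\th^*_{r-1}$ coefficient becomes
\[
-\frac{\th_{d-r}-\th_{d-r+3}}{\th_{d-r+1}-\th_{d-r+2}}\Big((\th_{d-r+2}-\th_0)+(\th_{d-r}-\th_{d-r+2})\Big),
\]
which matches. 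The $\th^*_r$ and $\th^*_{r+1}$ coefficients work the same way; note that $\th^*_{r+1}$ appears only through $\phi_{r+1}$, giving the factor $\th_{d-r}-\th_0$. The case $r=2$ involves $\phi_1,\phi_2,\phi_3$ with factors $\th_d-\th_0$, $\th_{d-1}-\th_0$, $\th_{d-2}-\th_0$, and is analogous.

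\textbf{Main obstacle.} The only real work is the $\th^*_0$ coefficient in the cases $r=2$ and $3\le r\le d-1$. There it must be shown that $-\sum_k c_k(\th_{d-k+1}-\th_0)$, plus any explicit $\th^*_0$ term, reduces to the stated coefficient; for $r=2$ this is $-(\th_0-\th_{d-2})$, after using $\th_{d-1}-\th_d$-type identities. I would first group these terms as $\th_{d-k+1}-\th_0=(\th_{d-k+1}-\th_{d-r})+(\th_{d-r}-\th_0)$. This splits off $(\th_{d-r}-\th_0)\sum_k c_k$, and the remaining differences should telescope against the explicit $\th^*_0$ term of Lemma \ref{lem:entry0}, as happened above for $r=d$. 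Everything is a rational identity in the $\th_i$, so if the grouping fails, clearing denominators and comparing polynomials settles it.
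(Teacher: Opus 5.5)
Your proposal is correct and follows the same route as the paper, which proves Lemma \ref{lem:entry} only as a routine verification. That verification is what you describe: substitute $\phi_k=\vth_k+(\th^*_k-\th^*_0)(\th_{d-k+1}-\th_0)$ from Definition \ref{def:vth} into Lemma \ref{lem:entry0} and collect the $\th^*$-coefficients. Your coefficient checks are right, and the $\th^*_0$ coefficient you flag as the main obstacle is immediate with your grouping. The constant part gives $(\th_0-\th_{d-r})\sum_k c_k$, which is exactly the stated coefficient. The leftover differences sum to zero on their own when $3\le r\le d-1$, and they cancel the explicit $\th^*_0$ term of Lemma \ref{lem:entry0} when $r=2$.
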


\begin{proof}
Routine verification using 
Definition \ref{def:vth} and Lemma \ref{lem:entry0}.
\end{proof}

\begin{lemma}   \label{lem:entryrec}    \samepage
\ifDRAFT {\rm lem:entryrec}. \fi
Assume that  $d \geq 3$ and
there exists $\beta \in \F$ such that
each of
$\{\th_i\}_{i=0}^d$,
$\{\th^*_i\}_{i=0}^d$
is $\beta$-recurrent.
Then the $(r,r-2)$-entry of \eqref{eq:triple} is as follows.
For $r=2$ the entry is
\[
\frac{- (\beta+1) \vth_1 + (\beta+1) \vth_2 - \vth_3}
        {(\th_{d-3} - \th_d)(\th_{d-2} - \th_d)(\th_{d-1} - \th_d) }.
\]
For $2 \leq r \leq d-1$ the entry is
\[
\frac{\vth_{r-2} - (\beta+1) \vth_{r-1} + (\beta+1) \vth_r - \vth_{r+1} } 
       {(\th_{d-r} - \th_{d-r+1})(\th_{d-r} - \th_{d-r+2})(\th_{d-r} - \th_{d-r+3})}.
\]
For $r=d$ the entry is
\[
\frac{\vth_{d-2} - (\beta+1) \vth_{d-1} + (\beta+1) \vth_d} {(\th_0 - \th_1)
(\th_0 - \th_2)
(\th_0 - \th_3)
}.
\]
\end{lemma}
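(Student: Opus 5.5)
We will derive Lemma \ref{lem:entryrec} directly from Lemma \ref{lem:entry}, by simplifying each coefficient under the $\beta$-recurrence assumption. The first step is to note that the $\th_i$ are mutually distinct, so $\{\th_i\}_{i=0}^d$ is recurrent. Since $d \geq 3$, Lemma \ref{lem:recseq0} then gives
\[
\frac{\th_{i-2} - \th_{i+1}}{\th_{i-1} - \th_i} = \beta + 1 \qquad\qquad (2 \leq i \leq d-1).
\]
The same holds for $\{\th_{d-i}\}_{i=0}^d$ by Lemma \ref{lem:recinv}. In particular, every ratio of the form $(\th_{d-r} - \th_{d-r+3})/(\th_{d-r+1} - \th_{d-r+2})$ appearing in Lemma \ref{lem:entry} equals $\beta+1$. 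This applies for $2 \leq r \leq d$: take $i = d-r+2$, which lies in $[2, d-1]$ for $3 \leq r \leq d$, and for $r = 2$ use the ratio $(\th_{d-3}-\th_d)/(\th_{d-2}-\th_{d-1})$ with $i = d-1$.

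The cases $r=2$ and $r=d$ then follow quickly.

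\emph{Case $r=d$.} The expression is already $\vth_{d-2} - (\beta+1)\vth_{d-1} + (\beta+1)\vth_d$.

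\emph{Case $r=2$.} The $\th^*$-bracket becomes $-\th^*_0 + (\beta+1)\th^*_1 - (\beta+1)\th^*_2 + \th^*_3$. This vanishes because $\{\th^*_i\}_{i=0}^d$ is $\beta$-recurrent (the case $i=2$ of Definition \ref{def:betarec}(ii)). What remains is $-(\beta+1)\vth_1 + (\beta+1)\vth_2 - \vth_3$. Since $\vth_0 = 0$, this also agrees with the general formula at $r=2$.

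\emph{Case $3 \leq r \leq d-1$.} Here the extra factor
\[
\frac{(\th_{d-r} - \th_{d-r+1})(\th_{d-r} - \th_{d-r+3})}{(\th_{d-r-1} - \th_{d-r+2})(\th_{d-r+1} - \th_{d-r+2})}
\]
appears. Its second part contributes $\beta+1$. For the first part, recurrence at $i = d-r+1$ gives $\th_{d-r-1} - \th_{d-r+2} = (\beta+1)(\th_{d-r} - \th_{d-r+1})$, and this index lies in range because $r \geq 2$ and $r \leq d-1$. Hence $(\th_{d-r} - \th_{d-r+1})/(\th_{d-r-1} - \th_{d-r+2}) = 1/(\beta+1)$, and the whole factor equals $1$.

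Two cautions apply to this step. First, $\beta+1 \neq 0$ here, since otherwise $\th_{i-2} = \th_{i+1}$, contradicting distinctness. Second, the formula must be rewritten multiplicatively rather than by naive substitution. With the factor equal to $1$, the $\th^*_0$ coefficient $1 - 1$ vanishes. The $\th^*$-bracket becomes $-\th^*_{r-2} + (\beta+1)\th^*_{r-1} - (\beta+1)\th^*_r + \th^*_{r+1}$, which is zero by $\beta$-recurrence of $\{\th^*_i\}$ at $i = r-1 \in [2, d-1]$. The $\vth$-part becomes $\vth_{r-2} - (\beta+1)\vth_{r-1} + (\beta+1)\vth_r - \vth_{r+1}$.

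The only real obstacle is the index bookkeeping: we must check that each application of the recurrence uses an index in the allowed range $2 \leq i \leq d-1$, and that the ratios are read in the right orientation, which is why the reversal in Lemma \ref{lem:recinv} is needed. Everything else is direct substitution into Lemma \ref{lem:entry}.
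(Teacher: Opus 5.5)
Your proposal is correct and is essentially the paper's argument: the paper's proof is simply ``By Lemma \ref{lem:entry}''. That means substituting $(\th_{i-2}-\th_{i+1})/(\th_{i-1}-\th_i)=\beta+1$, noting that the extra factor in the case $3\le r\le d-1$ equals $1$, and removing the $\th^*$-terms by the $\beta$-recurrence of $\{\th^*_i\}_{i=0}^d$, exactly as you do.

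Two small remarks. Lemma \ref{lem:recinv} is not needed, since every ratio you use (at $i=d-r+2$ and $i=d-r+1$) is already in forward orientation. Also, at $r=2$ only the numerator agrees with the middle formula, whose denominator would involve the undefined $\th_{d+1}$, so that range should really be read as $3\le r\le d-1$.
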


\begin{proof}
By Lemma \ref{lem:entry}.
\end{proof}

\begin{prop}   \label{prop:vthrec}   \samepage
\ifDRAFT {\rm prop:vthrec}. \fi
Assume that $d \geq 3$ and
there exists $\beta \in \F$ such that
each of
$\{\th_i\}_{i=0}^d$,
$\{\th^*_i\}_{i=0}^d$
is $\beta$-recurrent.
Then the following are equivalent:
\begin{itemize}
\item[\rm (i)]
$E_{r-2} A^* E_r = 0$  $(2 \leq r \leq d)$;
\item[\rm (ii)]
$E^*_{r-2} A E^*_r = 0$  $(2 \leq r \leq d)$;
\item[\rm (iii)]
 $\{\vartheta_i\}_{i=0}^{d+1}$ is  $\beta$-recurrent.
\end{itemize}
Assume that {\rm (i)--(iii)} hold.
Then $\Phi$ is TD.
\end{prop}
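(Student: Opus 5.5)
We plan to prove (i) $\Leftrightarrow$ (iii) directly from Lemma \ref{lem:entryrec}, obtain (ii) $\Leftrightarrow$ (iii) by duality, and then deduce the TD property from Lemmas \ref{lem:TL} and \ref{lem:TL2}.

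\emph{Step 1: (i) $\Leftrightarrow$ (iii).} Fix $r$ with $2 \leq r \leq d$. By Lemma \ref{lem:entrypre}, every entry of $(E_{d-r} A^* E_{d-r+2})^\flat$ is a scalar multiple of its $(r,r-2)$-entry. Moreover, the $(r,r-2)$-entry of the coefficient matrix appearing there is
\[
\frac{\tau_{d-r}(\th_{d-r})\,\eta_{r-2}(\th_{d-r+2})}{\tau_{d-r}(\th_{d-r})\,\eta_{r-2}(\th_{d-r+2})}=1 .
\]
Hence $E_{d-r}A^*E_{d-r+2}=0$ if and only if that single entry vanishes. As $r$ runs over $2,\ldots,d$, the index $d-r$ runs over $0,\ldots,d-2$, so (i) says that all of these entries vanish.

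By Lemma \ref{lem:entryrec}, under the $\beta$-recurrence hypotheses each entry equals a nonzero denominator times
\[
\vth_{r-2}-(\beta+1)\vth_{r-1}+(\beta+1)\vth_r-\vth_{r+1}.
\]
The denominators are nonzero because the $\th_i$ are mutually distinct. The cases $r=2$ and $r=d$ also fit this form, since $\vth_0=0$ and $\vth_{d+1}=0$. Vanishing of this expression for $2 \leq r \leq d$ is exactly the statement that $\{\vth_i\}_{i=0}^{d+1}$ is $\beta$-recurrent, because the range $2\le r\le (d+1)-1$ is precisely the required range for a sequence of length $d+2$. Therefore (i) $\Leftrightarrow$ (iii).

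\emph{Step 2: (ii) $\Leftrightarrow$ (iii).} Apply Step 1 to $\Phi^*$. The hypotheses are symmetric in $\{\th_i\}_{i=0}^d$ and $\{\th^*_i\}_{i=0}^d$, so they hold for $\Phi^*$. Step 1 then gives that (ii) is equivalent to $\{\vth^*_i\}_{i=0}^{d+1}$ being $\beta$-recurrent. By Lemma \ref{lem:vths2}, this is equivalent to (iii).

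\emph{Step 3: $\Phi$ is TD.} Assume (iii). By Lemma \ref{lem:recseq1} there are $\gamma,\gamma^*$ such that $\{\th_i\}_{i=0}^d$ is $(\beta,\gamma)$-recurrent and $\{\th^*_i\}_{i=0}^d$ is $(\beta,\gamma^*)$-recurrent. By Lemma \ref{lem:recseq2}(i) there are $\varrho,\varrho^*$ making these sequences $(\beta,\gamma,\varrho)$-recurrent and $(\beta,\gamma^*,\varrho^*)$-recurrent respectively. Lemmas \ref{lem:TL} and \ref{lem:TL2} then give \eqref{eq:td1} and \eqref{eq:td2}, so $\Phi$ is TD by Definition \ref{def:TDH}. (Alternatively, invoke Proposition \ref{prop:vth} directly.)

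The main obstacle is the bookkeeping in Step 1: checking that the three cases of Lemma \ref{lem:entryrec}, including the boundary cases $r=2$ and $r=d$, together cover exactly the $\beta$-recurrence conditions on $\{\vth_i\}_{i=0}^{d+1}$, with all denominators nonzero.
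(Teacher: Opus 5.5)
Your proposal is correct and follows the paper's proof. You get (i) $\Leftrightarrow$ (iii) from Lemmas \ref{lem:entrypre} and \ref{lem:entryrec} together with $\vth_0=\vth_{d+1}=0$, and (ii) $\Leftrightarrow$ (iii) by duality, making explicit the appeal to Lemma \ref{lem:vths2} that the paper leaves implicit; for the TD conclusion the paper simply cites Proposition \ref{prop:vth}, whereas you unpack it through Lemmas \ref{lem:recseq1}, \ref{lem:recseq2}, \ref{lem:TL}, \ref{lem:TL2}, exactly as the paper does for $d=2$ in Proposition \ref{prop:TDd2}.
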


\begin{proof}
(i) $\Leftrightarrow$ (iii)
Recall that $\vth_0 = 0$ and $\vth_{d+1}=0$.
Now use Lemmas \ref{lem:entrypre} and \ref{lem:entryrec}. 

(ii) $\Leftrightarrow$ (iii)
Apply (i) $\Leftrightarrow$ (iii) to $\Phi^*$.

Assume that {\rm (i)--(iii)} hold.
Then $\Phi$ is TD by Proposition \ref{prop:vth}.
\end{proof}

\bigskip

\begin{prop}   \label{prop:vthgpre}   \samepage
\ifDRAFT {\rm prop:vthgpre}. \fi
Assume that $d \geq 3$ and  there exists $\beta \in \F$ such that
each of
$\{\th_i\}_{i=0}^d$,
$\{\th^*_i\}_{i=0}^d$
is $\beta$-recurrent.
Then the following are equivalent:
\begin{itemize}
\item[\rm (i)]
$E_{r-2} A^* E_r = 0$ $(2 \leq r \leq d)$ and $E_0 A^* E_d = 0$;
\item[\rm (ii)]
$E^*_{r-2} A E^*_r = 0$ $(2 \leq r \leq d)$ and $E^*_0 A E^*_d = 0$;
\item[\rm (iii)]
$\{\vartheta_i\}_{i=0}^{d+1}$ is $\beta$-recurrent and $\vartheta_1 = \vartheta_d$.
\end{itemize}
Assume that {\rm (i)--(iii)} hold.
Then $\Phi$ is THS.
\end{prop}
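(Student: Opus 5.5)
The plan is to combine Proposition \ref{prop:vthrec} with Lemma \ref{lem:JHL}(iii),(iv). We are working under the standing hypotheses $d \geq 3$ and that there is $\beta \in \F$ with both $\{\th_i\}_{i=0}^d$ and $\{\th^*_i\}_{i=0}^d$ being $\beta$-recurrent.

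For (i) $\Rightarrow$ (iii), assume (i). The first part of (i) is condition (i) of Proposition \ref{prop:vthrec}. By that proposition, $\{\vth_i\}_{i=0}^{d+1}$ is $\beta$-recurrent. Now each of $\{\th_i\}_{i=0}^d$, $\{\th^*_i\}_{i=0}^d$, $\{\vth_i\}_{i=0}^{d+1}$ is $\beta$-recurrent, which is exactly the hypothesis of Lemma \ref{lem:JHL}. Part (iii) of that lemma then turns the remaining assumption $E_0 A^* E_d = 0$ into $\vth_1 = \vth_d$. This gives (iii).

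For (iii) $\Rightarrow$ (i), assume (iii). Since $\{\vth_i\}_{i=0}^{d+1}$ is $\beta$-recurrent, Proposition \ref{prop:vthrec} gives $E_{r-2}A^*E_r = 0$ for $2 \leq r \leq d$. The hypothesis of Lemma \ref{lem:JHL} again holds, so Lemma \ref{lem:JHL}(iii) together with $\vth_1 = \vth_d$ gives $E_0 A^* E_d = 0$.

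The equivalence (ii) $\Leftrightarrow$ (iii) is proved the same way, using Proposition \ref{prop:vthrec}(ii) and Lemma \ref{lem:JHL}(iv) in place of the $A^*$-versions. Alternatively one can dualize: apply (i) $\Leftrightarrow$ (iii) to $\Phi^*$ and use Lemma \ref{lem:vths} together with Lemma \ref{lem:vths2}. These show that $\beta$-recurrence of $\{\vth_i\}$ is self-dual and that $\vth^*_1 = \vth_d$, $\vth^*_d = \vth_1$, so the condition $\vth_1 = \vth_d$ is also self-dual.

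For the final assertion, assume (i)--(iii). Then condition (iii) of Proposition \ref{prop:QCHTD} holds, since $\vth_1 = \vth_d$ and all three sequences are $\beta$-recurrent. Proposition \ref{prop:QCHTD} therefore shows that $\Phi$ is THS.

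We do not expect a real obstacle. The one point needing care is that Lemma \ref{lem:JHL} requires $\beta$-recurrence of $\{\vth_i\}$ as a hypothesis, so in each direction the order matters: first obtain this recurrence from Proposition \ref{prop:vthrec}, and only then invoke Lemma \ref{lem:JHL}.
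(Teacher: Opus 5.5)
Your proposal is correct and is essentially the paper's own proof. The paper proves (i)$\Leftrightarrow$(iii) by combining Proposition \ref{prop:vthrec} with Lemma \ref{lem:JHL}(iii), obtaining the $\beta$-recurrence of $\{\vth_i\}_{i=0}^{d+1}$ before invoking the lemma, as you note; it gets (ii)$\Leftrightarrow$(iii) by applying this to $\Phi^*$, and deduces the THS conclusion from Proposition \ref{prop:QCHTD}.
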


\begin{proof}
(i) $\Leftrightarrow$ (iii)
By Lemma \ref{lem:JHL} and Proposition \ref{prop:vthrec}.

(ii) $\Leftrightarrow$ (iii)
Apply (i) $\Leftrightarrow$ (iii) to $\Phi^*$.

Assume that {\rm (i)--(iii)} hold.
Then $\Phi$ is THS by Proposition \ref{prop:QCHTD}.
\end{proof}

\section{Proof of \eqref{eq:ii}--\eqref{eq:v}}
\label{sec:proof}
\ifDRAFT {\rm sec:proof}. \fi

In this section we prove  \eqref{eq:ii}--\eqref{eq:v}.
Let
$\Phi = (A; \{E_i\}_{i=0}^d; A^*; \{E^*_i\}_{i=0}^d)$
denote a Hessenberg system on $V$,  with parameter array
$(\{\th_i\}_{i=0}^d; \{\th^*_i\}_{i=0}^d; \{\phi_i\}_{i=1}^d)$.
Let the scalars $\{\vartheta_i\}_{i=0}^{d+1}$ be from Definition \ref{def:vth}.

\begin{proofof}{\eqref{eq:ii}}
Assume that $\Phi$ is TD.
If $d=2$, then $\Phi$ is vacuously REC.
If $d \geq 3$ then $\Phi$ is REC by Proposition \ref{prop:vth}.
\end{proofof}

\begin{proofof}{\eqref{eq:iii}}
For $d=2$ use  Proposition \ref{prop:TDd2}.
For $d \geq 3$ use Propositions  \ref{prop:vth}, \ref{prop:QCHTD}.
\end{proofof}

\begin{proofof}{\eqref{eq:iv}}
Assume that  $\Phi$ is  QCHS.
Then for $0 \leq i,j\leq d$
such that $1<j-i<d$, we have $E_i A^* E_j =0$ and
$E^*_i A E^*_j=0$. By this and Propositions  \ref{prop:QCHTD}, \ref{prop:vthd2}   we see
that $E_0 A^* E_d=0$ if and only if $E^*_0 A E^*_d=0$.
The result follows.
\end{proofof}

\begin{proofof}{\eqref{eq:v}}
First assume that $\Phi$ is TD.
We show that $\Phi$ is QCHS.
For $d=2$  the conditions in Definition \ref{def:QCHS}(i),(ii) are vacuous,
so $\Phi$ is QCHS.
For $d \geq 3$
the Hessenberg system $\Phi$ is QCHS
by Propositions \ref{prop:vth} and  \ref{lem:JHL}(i),(ii).

Next assume that $\Phi$ is  QCHS. We show that $\Phi$ is TD.
By \eqref{eq:iv}, $\Phi$ is CHS or THS.
By this and \eqref{eq:i}, \eqref{eq:iii}
the Hessenberg system
$\Phi$ is TD.
\end{proofof}

\end{document}